\title{\bf Sharp Heat Kernel Bounds and Entropy in Metric Measure Spaces}
\author{Huaiqian Li\footnote{Email: hqlee@scu.edu.cn. Partially supported by the National Natural Science Foundation of China (NSFC) No.11401403 and the Australian Research Council (ARC) grant DP130101302. }\vspace{3mm}\\
{\footnotesize School of Mathematics, Sichuan University, Chengdu 610064, P. R. China}
}
\date{}
\def\R{\mathbb{R}}
\def\L{\mathcal{L}}
\def\d{\textup{d}}
\def\CD{\textup{CD}}
\def\csch{\textup{csch}}
\def\RCD{\textup{RCD}}
\def\<{\langle}
\def\>{\rangle}
\def\Proof.{\noindent{\bf Proof. }}
\def\loc{\textup{loc}}
\def\newdot{{\kern.8pt\cdot\kern.8pt}}
\newtheorem{theorem}{Theorem}[section]
\newtheorem{lemma}[theorem]{Lemma}
\newtheorem{corollary}[theorem]{Corollary}
\newtheorem{definition}[theorem]{Definition}
\theoremstyle{definition}\newtheorem{remark}[theorem]{Remark}
\begin{document}

\maketitle
\makeatletter % '@' is now a normal "letter" for TeX
\renewcommand\theequation{\thesection.\arabic{equation}}
\@addtoreset{equation}{section}
\makeatother % '@' is restored as a "non-letter" character for TeX

\begin{abstract}
We establish sharp upper and lower bounds of Gaussian type for the heat kernel in the metric measure space satisfying $\RCD(0,N)$ ( equivalently, $\RCD^\ast(0,N)$) condition with $N\in \mathbb{N}\setminus\{1\}$ and having maximum volume growth, and then show its application on the large-time asymptotics of the heat kernel, sharp bounds on the (minimal) Green function, and above all, the large-time asymptotics of the Perelman entropy and the Nash entropy, where for the former the monotonicity of the Perelman entropy is proved. The results generalize the corresponding ones in Riemannian manifolds and also in metric measure spaces obtained recently by the author with R. Jiang and H. Zhang in \cite{JLZ2014}.
\end{abstract}

{\bf MSC 2010:} primary 53C23; secondary 35K08; 35K05; 42B20; 47B06

{\bf Keywords:} Entropy; Heat kernel; Maximum volume growth; Riemannian curvature-dimension condition

\section{Introduction}\hskip\parindent
Let $K\in \R$ and $N\in [1,\infty)$. In the pioneer works of Lott--Villani \cite{LottVillani2009} and Sturm  \cite{Sturm2006a,Sturm2006b}, a notion of Ricci curvature bounded from below by $K$ and dimension bounded above by $N$ in the metric measure space $(X,d,\mu)$, called the curvature-dimension condition and denoted by $\CD(K,N)$, was proposed independently by the aforementioned authors (note that only the cases $\CD(0,N)$ and $\CD(K,\infty)$ are considered in \cite{LottVillani2009}).  A lot of work on the study of functional and geometric implications in the $\CD(K,N)$ space has been done since then; refer to \cite[Part III]{Villani2009} for an elaborate presentation of the theory. Recently, Ambrosio--Gigli--Savar\'{e} \cite{AmbrosioGigliSavare2011b} introduced the Riemannian curvature condition, denoted by $\RCD(K,\infty)$, which is stronger than the curvature-dimension condition $\CD(K,\infty)$ in the sense by requiring additionally the space to be infinitesimally Hilbertian, and established the equivalence between the $\RCD(K,\infty)$ condition and the curvature-dimension condition in the sense of Bakry--Emery \cite{BakryEmery} (see \cite{AmbrosioGigliSavare2012}). Erbar--Kuwada--Sturm \cite{eks2013} introduced the Riemannian curvature-dimension condition with $N$ finite, denoted by $\RCD^\ast(K,N)$ (see also \cite{AMS2012}), which is a strengthening of the reduced curvature-dimension condition $\CD^\ast(K,N)$ introduced in \cite{BacherSturm2010}.

Let $(X,d,\mu)$ be an $\RCD^\ast(K,N)$ space with $K\leq 0$ and $N\in(1,\infty)$. In a recent joint work \cite{JLZ2014}, by using the comparison result (see e.g. Lemma \ref{kernelcomparison} below), the author with R. Jiang and H. Zhang established the following heat kernel upper and lower bounds of Gaussian type. More precisely, if $K=0$, then given any $\epsilon>0$, there exists a constant $C_1(\epsilon)>0$ such that
\begin{equation*}
\frac{C_1(\epsilon)^{-1}}{\mu(B(y,\sqrt t))}\exp\left\{-\frac{d^2(x,y)}{(4-\epsilon)t}\right\}\le p_t(x,y)\le \frac{C_1(\epsilon)}{\mu(B(y,\sqrt t))}\exp\left\{-\frac{d^2(x,y)}{(4+\epsilon)t}\right\},
\end{equation*}
for all $t>0$ and all $x,y\in X$; if $K<0$, then given any $\epsilon>0$, there exist constants $C_2(\epsilon),C_3(\epsilon)>0$ such that
\begin{eqnarray*}
\frac{C_2(\epsilon)^{-1}}{\mu(B(y,\sqrt t))}\exp\left\{-\frac{d^2(x,y)}{(4-\epsilon)t}-C_3(\epsilon) t\right\}&\le& p_t(x,y)\\
&\le&\frac{C_2(\epsilon)}{\mu(B(y,\sqrt t))}\exp\left\{-\frac{d^2(x,y)}{(4+\epsilon)t}+C_3(\epsilon) t\right\},
\end{eqnarray*}
for all $t>0$ and all $x,y\in X$.

In this note, we show more explicit and sharper upper and lower bounds of Gaussian type for the heat kernel in the $\RCD(0,N)$ space (equivalently, $\RCD^\ast(0,N)$ space) with $N\in (1,\infty)$ by assuming additionally that $N$ is an integer and the space has maximum volume growth, which is also a generalization of the result established in the Riemannian manifold (see \cite{LiTamWang}). And then we show some applications.

In what follows, we give a short introduction of the $\RCD(K,N)$ space and present some known results in Section 2. In Section 3, we establish the sharp heat kernel lower and upper bounds. Finally, in Section 4, we show the large-time asymptotics of the Perelman entropy and the Nash entropy, where for the former, we prove the monotonicity of the Perelman entropy, and for the later, it is a direct application of our sharp heat kernel bounds.

\section{Preliminaries}
\hskip\parindent In this section, we briefly recall some basic notions and several auxiliary results. More details can be found in \cite{AmbrosioGigliSavare2011b,AmbrosioGigliSavare2014,agmr2015,gi2012}.
\subsection{Sobolev spaces and the Laplacian}
\hskip\parindent
Let $(X,d)$ be a complete and separable metric space and let $C([0, 1],X)$ be the space of continuous curves on $[0, 1]$ with values in $X$ equipped with the supremum norm. For $t\in [0, 1]$, the map $e_t :C([0, 1],X) \rightarrow X$ is the evaluation at
time $t$ defined by
$$e_t(\gamma):=\gamma_t.$$

A curve $\gamma: [0,1] \rightarrow X$ is in the absolutely continuous
class $AC^q([0,1],X)$ for some $q\in [1,\infty]$, if there exists $f\in L^q([0,1])$ such that, \begin{eqnarray}\label{ac}
d(\gamma_s,\gamma_t)\leq \int_s^t g(r)\,d r,\quad\mbox{for any }s,t\in [0,1]\mbox{ satisfying } s<t.
\end{eqnarray}
It is true that, if $\gamma\in AC^p([0,1];X)$, then the metric slope
$$\lim_{\delta\rightarrow 0}\frac{d(\gamma_{r+\delta},\gamma_r)}{|\delta|},$$
denoted by $|\dot{\gamma}_r|$, exists for $\mathcal{L}^1$-a.e. $r\in [0,1]$, belongs to $L^p([0,1])$, and it is the minimal function $g$ such that \eqref{ac} holds (see Theorem 1.1.2 in \cite{AmbrosioGigliSavare2005}). The length of the absolutely continuous curve $\gamma: [0,1]\rightarrow X$ is denoted by $\int_0^1 |\dot{\gamma}_r|\,d r$. We call that $(X,d)$ is a length space if
$$d(x_0,x_1)=\inf\left\{\int_0^1 |\dot{\gamma}_r|\,d r:\, \gamma\in AC^1([0,1],X),\, \gamma_i=x_i,\,i=0,1 \right\},\quad \forall\, x_0,x_1\in X.$$

Let $\mu$ be a $\sigma$-finite Radon measure on $(X,d)$ with support the whole space $X$. Throughout the work, we call the triple $(X,d,\mu)$ the metric measure space.

\begin{definition}[Test Plan]
Let $\pi$ be a probability measure on $C([0, 1],X)$. We say that $\pi$ is a test plan if there exists a constant $C>0$ such that
$$(e_t)_\sharp{ \pi}\le C\mu,\quad\mbox{for all }t\in [0,1],$$
and
$$\int \int_0^1|\dot{\gamma}_t|^2\,\d t\,\d\pi(\gamma)<\infty.$$
  \end{definition}

\begin{definition}[Sobolev class] \label{sobolev}
The Sobolev class $S^2(X)$ (resp. $S_{\loc}^2(X)$) is the space of all Borel functions $f: X\rightarrow \R$, for which there exists a non-negative function $G\in L^2(X)$ (resp. $G\in L^2_{\loc}(X)$) such that, for each test plan $\pi$, it holds
\begin{equation}\label{curve-sobolev}
\int |f(\gamma_1)-f(\gamma_0)|\,\d\pi(\gamma)\leq \int \int_0^1 G(\gamma_t)|\dot{\gamma}_t|\,dt\,\d\pi(\gamma).
\end{equation}
\end{definition}
It then follows from a compactness argument that,  for each  $f\in S^2(X)$ there exists a unique minimal $G$
in the $\mu$-a.e. sense such that \eqref{curve-sobolev} holds. We then denote the minimal $G$ by $|\nabla  f|_w$
and call it the minimal weak upper gradient following \cite{AGS2013}.

The inhomogeneous Sobolev space $W^{1,2}(X)$ is defined as  $S^2(X)\cap L^2(X)$, which equipped with the norm
$$\|f\|_{W^{1,2}(X)}:=\Big(\|f\|_{L^2}^2+\| |\nabla  f|_w\|_{L^2(X)}^2\Big)^{1/2},$$
is a Banach space, but not a Hilbert space in general.

The local Sobolev space $W^{1,2}_{\mathrm{loc}}(\Omega)$ for an open set $\Omega\subset X$,  and the Sobolev space with compact support
$W^{1,2}_{c}(X)$ can be defined in an obvious manner. See \cite{AGS2013,ch,sh} for the study of relevant Sobolev spaces.

The following definitions and results are mainly borrowed from \cite{gi2012}.
\begin{definition}[Infinitesimally Hilbertian Space] Let $(X, d,\mu)$ be a metric measure
space. If $W^{1,2}(X)$ is a Hilbert space, then we call that $(X, d,\mu)$ is an infinitesimally Hilbertian space.
\end{definition}

Notice that, from the definition, it follows that $(X, d,\mu)$ is infinitesimally Hilbertian if and only if,
for any $f,g\in S^2(X)$, it holds
$$\||\nabla (f+g)|_w\|_{L^2(X)}^2+\||\nabla (f-g)|_w\|_{L^2(X)}^2=2\Big(\||\nabla f|_w\|_{L^2(X)}^2+\||\nabla g|_w\|_{L^2(X)}^2\Big).$$

\begin{definition} Let $(X, d,\mu)$ be an infinitesimally Hilbertian space, $\Omega$ be an open subset of $X$ and $f, g\in S^2_{\mathrm{loc}}(\Omega)$.
The map $\langle \nabla f, \nabla g\rangle :\, \Omega \rightarrow \R$ is defined as
$$\langle \nabla f, \nabla g\rangle(x):= \inf_{\epsilon>0} \frac{|\nabla (g+\epsilon f)|_w^2(x)-|\nabla g|_w^2(x)}{2\epsilon},\quad\mbox{for }\mu\mbox{-a.e. }x\in X,$$
where the infimum is intended as $\mu$-essential infimum.
\end{definition}

The inner product $\langle \nabla f, \nabla g\rangle$ is linear and satisfies the Cauchy--Schwarz inequality,
the chain rule and the Leibniz rule (see e.g. \cite{gi2012}).

With the aid of the inner product, we can define the Laplacian operator as below.
\begin{definition}[Laplacian] \label{glap}
Let $(X, d,\mu)$ be an infinitesimally Hilbertian space and let $f\in W^{1,2}_{\mathrm{loc}}(X)$. We call $f\in {\mathcal D}_{\mathrm{loc}}(\Delta)$, if there exists $h\in L^1_{\mathrm{loc}}(X)$ such that, for each $\psi\in W^{1,2}_{c}(X)$, it holds that
\begin{equation*}
\int_X\langle \nabla f,\nabla\psi\rangle\, \d\mu=-\int_X h\psi\,\d\mu.
\end{equation*}
We denote $h$ as $\Delta f$ and call it the Laplacian of $f$. If $f\in W^{1,2}(X)$ and $h\in L^2(X)$, then we write $f\in {\mathcal D}(\Delta)$.
\end{definition}

Notice that the Laplacian operator is linear due to that $(X, d,\mu)$ is infinitesimally Hilbertian. From the Leibniz rule of the inner product, it follows that if $f,g\in {\mathcal D}_{\mathrm{loc}}(\Delta)\cap L^\infty_{\mathrm{loc}}(X)$ (resp. Lipschitz continuous functions $f,g\in {\mathcal D}(\Delta)\cap L^\infty_{\mathrm{loc}}(X)$), then $fg\in {\mathcal D}_{\mathrm{loc}}(\Delta)$ (resp. $fg\in {\mathcal D}(\Delta)$) satisfies
$\Delta (fg)=g\Delta f+f\Delta g+2\nabla f\cdot\nabla g$.

\subsection{Curvature-dimension conditions and consequences}
\hskip\parindent Let $(X,d,\mu)$ be  an infinitesimally Hilbertian space. Then the heat flow $\{e^{t\Delta}\}_{t\geq0}$ is linear. Denote by $\{P_t\}_{t\geq0}$  the heat semigroup corresponding to the Dirichlet form $\big(\mathcal{E},W^{1,2}(X)\big)$ defined by
$$\mathcal{E}(f,g)=\int_X \langle \nabla f,\nabla g \rangle\, \d\mu,\quad f,g\in W^{1,2}(X).$$
Moreover, in the $\RCD(K,N)$ space with $K\in\R$ and $N\in (1,\infty)$, introduced in Definition \ref{rcd} below, $P_t=e^{t\Delta}$ for all $t\geq0$.

Now we recall the definition of the $\RCD(K,N)$ space. Let $\mathcal{P}(X)$ be the set of all the Borel probability measures on $X$, and let $\mathcal{P}(X,\mu)$ be the subset of $\mu$-absolutely continuous measures in $\mathcal{P}(X)$. Given two numbers $K\in \R$ and $N\in (1,\infty)$, we set for any $(t,\theta)\in [0,1]\times [0,\infty)$,
\begin{equation*}\label{distortion1-1}
\tau_{K,N}^{(t)}(\theta)=
\begin{cases}
t^{\frac{1}{N}}\left(\frac{\sinh\big(t\theta\sqrt{-K/(N-1)}\big)}{\sinh\big(\theta\sqrt{-K/(N-1)}\big)}\right)^{1-\frac{1}{N}},\quad &{\mbox{if }K\theta^2< 0\mbox{ and }N>1},\\
t,\quad &{\hbox{if }K\theta^2=0,\mbox{ or if }K\theta^2<0\mbox{ and }N=1},\\
t^{\frac{1}{N}}\left(\frac{\sin\big(t\theta\sqrt{K/(N-1)}\big)}{\sin\big(\theta\sqrt{K/(N-1)}\big)}\right)^{1-\frac{1}{N}},\quad &{\hbox{if }0<K\theta^2<(N-1)\pi^2},\\
+\infty,\quad &{\hbox{if }K\theta^2\geq (N-1)\pi^2}.
\end{cases}
\end{equation*}
\begin{definition}\label{rcd}
Let $K\in \R$ and $N\in (1,\infty)$. We say that the metric measure space $(X,d,\mu)$ satisfies the Riemannian curvature-dimension condition, denoted as the $\RCD(K,N)$ space, if it is infinitesimally Hilbertian and for every pair $\eta_0,\eta_1\in \mathcal{P}(X,\mu)$ with bounded support, there exists an optimal coupling $\pi$ of $\eta_0$ and $\eta_1$ such that
\begin{eqnarray}\label{cd-1-1}
&&\int_X \rho_t^{1-\frac{1}{N'}}\,\d\mu\cr
 &\geq& \int\left[\tau_{K,N'}^{(1-t)}(d(\gamma_0,\gamma_1))\rho_0^{-\frac{1}{N'}}(\gamma_0) + \tau_{K,N'}^{(t)}(d(\gamma_0,\gamma_1))\rho_1^{-\frac{1}{N'}}(\gamma_1) \right]\,\d\pi(\gamma),
\end{eqnarray}
for all $t\in [0,1]$ and all $N'\geq N$, where, for every $t\in [0,1]$,  $\rho_t$ denotes the Radon--Nikodym derivative $\frac{\d (e_t)_{\#}\pi}{\d\mu}$.
\end{definition}
Note that from Definition \ref{rcd}, we can deduce that, for any $K'\leq K$ and $N'\geq N$, $\RCD(K,N)$ implies $\RCD(K',N)$ and $\RCD(K,N')$.

Recall that we call that the metric measure space $(X,d,\mu)$ is an $\RCD^\ast(K,N)$ space with $K\in\R$ and $N\in (1,\infty)$ if the same conditions in Definition \ref{rcd} are satisfied with $\tau_{K,N'}^{(1-t)}(d(x_0,x_1))$ and $\tau_{K,N'}^{(t)}(d(x_0,x_1))$ in \eqref{cd-1-1} replaced respectively by $\sigma_{K,N'}^{(1-t)}(d(x_0,x_1))$ and $\sigma_{K,N'}^{(t)}(d(x_0,x_1))$, where for any $(t,\theta)\in [0,1]\times [0,\infty)$,
\begin{equation*}\label{distortion2}
\sigma_{K,N}^{(t)}(\theta)=
\begin{cases}
 \frac{\sinh\big(t\theta\sqrt{-K/N}\big)}{\sinh\big(\theta\sqrt{-K/N}\big)} ,\quad &{\mbox{if }K\theta^2< 0\mbox{ and }N>1},\\
t,\quad &{\hbox{if }K\theta^2=0,\mbox{ or if }K\theta^2<0\mbox{ and }N=1},\\
 \frac{\sin\big(t\theta\sqrt{K/N}\big)}{\sin\big(\theta\sqrt{K/N}\big)} ,\quad &{\hbox{if }0<K\theta^2<N\pi^2},\\
+\infty,\quad &{\hbox{if }K\theta^2\geq N\pi^2}.
\end{cases}
\end{equation*}
See \cite[Sections 3 and 4]{eks2013} for other equivalent characterizations of the $\RCD^\ast(K,N)$ space. It turns out that every $\RCD(K,N)$ space is an $\RCD^\ast(K,N)$ space, and every $\RCD^\ast(K,N)$ space is an $\RCD((N-1)K/N,N)$ space. In particular, $\RCD(0,N)$ and $\RCD^\ast(0,N)$ are equivalent.

From now on, let $(X,d,\mu)$ be an $\RCD(K,N)$ space with $K\in\R$ and $N\in(1,\infty)$. Then  the measure $\mu$ satisfies the local doubling (global doubling, provided $K\geq 0$) property, which we present in the next lemma (see e.g.,  \cite{Sturm2006b}, \cite[Section 5]{gi2012} or \cite[Section 2]{JLZ2014}).
\begin{lemma}\label{doubling}
Let $(X,d,\mu)$ be an $\RCD(K,N)$ space with $K\leq 0$ and $N\in (1,\infty)$, and let $x\in X$ and $0<r\leq R<\infty$.
\begin{itemize}
\item[(i)] If $K=0$, then
$$\mu\big(B(x,R)\big)\leq \left(\frac{R}{r}\right)^{N} \mu\big(B(x,r)\big).$$

\item[(ii)] If $K<0$, then
$$\mu\big(B(x,R)\big)\leq \frac{l_{K,N}(R)}{l_{K,N}(r)} \mu\big(B(x,r)\big),$$
where $(0,\infty)\ni t\mapsto l_{K,N}(t)$ is a continuous function depending on $K$ and $N$, and $l_{K,N}(t)=O(e^{tC(K,N)})$ as $t$ tends to $\infty$ for some constant $C(K,N)$ depending on $K$ and $N$.
\end{itemize}
\end{lemma}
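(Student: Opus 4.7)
The plan is to derive both estimates from the generalized Bishop-Gromov volume comparison theorem, which is known to hold in every $\CD^\ast(K,N)$ space thanks to the work of Bacher-Sturm. Since every $\RCD(K,N)$ space is in particular $\CD^\ast(K,N)$, this comparison is at our disposal. It asserts that for every $x\in X$ and $0<r\le R<\infty$,
\[
\frac{\mu(B(x,R))}{\mu(B(x,r))} \le \frac{V_{K,N}(R)}{V_{K,N}(r)},
\]
where $V_{K,N}(t):=\int_0^t \mathfrak{s}_{K,N}(\rho)^{N-1}\,\d\rho$ is the volume of a ball of radius $t$ in the $N$-dimensional model space of Ricci curvature $K$, with $\mathfrak{s}_{K,N}$ solving the associated one-dimensional Jacobi equation.

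For part (i), $K=0$ gives $\mathfrak{s}_{0,N}(\rho)=\rho$ and hence $V_{0,N}(t)=t^N/N$, so the right-hand side of the comparison equals $(R/r)^N$, which is the claim. For part (ii), $K<0$ gives
\[
\mathfrak{s}_{K,N}(\rho)=\sqrt{\tfrac{N-1}{-K}}\,\sinh\!\left(\rho\sqrt{\tfrac{-K}{N-1}}\right),
\]
and I would set
\[
l_{K,N}(t):=\int_0^t \left[\sinh\!\left(\rho\sqrt{\tfrac{-K}{N-1}}\right)\right]^{N-1}\d\rho,
\]
since the dimensional prefactor cancels in the ratio. The function $l_{K,N}$ is continuous and strictly increasing on $(0,\infty)$, so the quotient $l_{K,N}(R)/l_{K,N}(r)$ is well-defined and yields the stated inequality.

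It remains to verify the growth rate. Setting $a:=\sqrt{-K/(N-1)}$ and using $\sinh(a\rho)\le e^{a\rho}/2$ for $\rho\ge 0$, we obtain
\[
l_{K,N}(t) \le 2^{-(N-1)}\int_0^t e^{(N-1)a\rho}\,\d\rho \le \frac{2^{-(N-1)}}{(N-1)a}\,e^{(N-1)a\,t},
\]
so $l_{K,N}(t)=O(e^{t C(K,N)})$ with $C(K,N):=(N-1)a=\sqrt{-K(N-1)}$. The only non-routine ingredient is the Bishop-Gromov comparison itself, which is the genuine substance of the statement and is already available in the literature; once it is invoked, the rest reduces to elementary computations on the one-dimensional model space, so no real obstacle remains.
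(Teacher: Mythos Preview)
Your proposal is correct and aligns with the paper's treatment: the paper does not supply its own proof of this lemma but simply cites the literature (Sturm's \emph{On the geometry of metric measure spaces II}, Gigli, and Jiang--Li--Zhang), all of which rest on the Bishop--Gromov volume comparison that you invoke. The only minor remark is that the paper's primary citation is Sturm's original $\CD(K,N)$ Bishop--Gromov theorem rather than the $\CD^\ast(K,N)$ version of Bacher--Sturm, but since $\RCD(K,N)$ satisfies both, your route is equally valid and the elementary computations you carry out for $l_{K,N}$ and its exponential growth are exactly what is needed.
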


Note that, since we only consider the case when $K=0$, we never use Lemma \ref{doubling}(ii) in the main parts of this note, which is presented here just for completeness.

From the definition of the $\RCD(K,N)$ space, we know that $(X,d)$ is a length space. The (local) doubling property immediately implies that every bounded closed ball in $(X,d)$ is totally bounded. Since $(X,d)$ is also complete, it is then proper and geodesic. Recall that a metric space $(X,d)$ is proper if every bounded closed subset is compact. Hence, it is immediate to check that the Dirichlet form $(\mathcal{E},W^{1,2}(X))$, defined at the beginning of this subsection, is strongly local and regular.

By \cite[Theorem 3.9]{AmbrosioGigliSavare2012}, we see that the intrinsic metric induced by the Dirichlet form $(\mathcal{E}, W^{1,2}(X))$, defined as
$$d_{\mathcal{E}}(x,y)=\sup\{\psi(x)-\psi(y):\, \psi\in W^{1,2}(X)\cap C(X),\, |\nabla\psi|_w\leq 1\, \mu\mbox{-a.e. in }X\},$$
for every $x,y\in X$, coincides with the original one, i.e.,
$$d_{\mathcal{E}}(x,y)=d(x,y),\quad \forall\, x,y\in X.$$
Hence, we can work indifferently with either one of the distances $d$ and $d_{\mathcal{E}}$.

Recently, T. Rajala \cite{raj1,raj2} proved that a weak local $L^2$-Poincar\'{e} inequalities hold in the $\RCD(K,N)$ space, and hence also a (strong) local  $L^2$-Poincar\'{e} inequalities hold by the doubling and geodesic properties and by applying \cite[Theorem 1]{HajlaszKoskela1995}. See also \cite[section 2]{JLZ2014}.

\begin{lemma}\label{localPoicare}
Let $(X,d,\mu)$ be an $\RCD(K,N)$ space with $K\leq 0$ and $N\in (1,\infty)$. Then for every $x\in X$ and every $R>0$, there exists a positive constant $C:=C(K,N,R)$ such that for any $r\in (0,R)$,
\begin{eqnarray}\label{localstrongPoincare}
\int_{B(x,r)} |f-f_B|^2\, \d\mu\leq Cr^2\int_{B(x,r)}|\nabla f|^2\, \d\mu,\quad\mbox{for all }f\in W^{1,2}(X),
\end{eqnarray}
where $f_B=\frac{1}{\mu(B(x,r))}\int_{B(x,r)} f\, \d\mu$. In particular, if $K=0$, then \eqref{localstrongPoincare} holds with constant $C:=C(N)$ independent of $R$.
\end{lemma}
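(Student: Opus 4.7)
The plan is to upgrade the weak local $(2,2)$-Poincar\'{e} inequality of Rajala to its strong counterpart via the Hajlasz--Koskela chaining argument, using the doubling property from Lemma~\ref{doubling} and the geodesic structure of $(X,d)$ recorded just above the statement.

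First I would invoke the main result of \cite{raj1,raj2}: in any $\RCD(K,N)$ space with $K\le 0$ and $N\in(1,\infty)$, there exist a dilation constant $\lambda\ge 1$ and a constant $C_w:=C_w(K,N,R)>0$, both depending only on $N$ when $K=0$, such that
\begin{equation*}
\int_{B(x,r)}|f-f_B|^2\,\d\mu\le C_w r^2\int_{B(x,\lambda r)}|\nabla f|_w^2\,\d\mu
\end{equation*}
for every $x\in X$, every $r\in(0,R)$ and every $f\in W^{1,2}(X)$.

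Second, Lemma~\ref{doubling}, applied on the scale $\lambda R$, supplies a doubling constant $C_d:=C_d(K,N,R)$ for balls of radius at most $\lambda R$; when $K=0$ this constant reduces to $2^N$, independent of $R$, since Lemma~\ref{doubling}(i) gives $\mu(B(x,2r))\le 2^N\mu(B(x,r))$. Combined with the fact that $(X,d)$ is geodesic, these are exactly the hypotheses of Theorem~1 of \cite{HajlaszKoskela1995}, whose chaining argument connects each pair of points in $B(x,r)$ along a geodesic, covers this geodesic by a sequence of sub-balls whose dilations stay inside $B(x,r)$, applies the weak Poincar\'{e} inequality on each sub-ball, and sums the telescoping contributions using doubling. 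Applying that theorem with $p=2$ yields \eqref{localstrongPoincare} with a constant $C$ depending only on $C_w$, $C_d$ and $\lambda$, i.e.\ $C=C(K,N,R)$ in general and $C=C(N)$ when $K=0$.

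The main (and essentially only) obstacle is to bookkeep the scale dependence of the constants in the $K=0$ case, so as to confirm that both the Rajala Poincar\'{e} constant and the Hajlasz--Koskela chaining constant inherit the scale invariance of the underlying doubling and Bakry--Emery estimates. Since at $K=0$ the Bakry--Emery gradient estimate $|\nabla P_t f|_w^2\le P_t|\nabla f|_w^2$ carries no explicit length scale and Lemma~\ref{doubling}(i) provides a scale-free doubling constant, every constant in the above chain of implications is indeed independent of $R$, as claimed.
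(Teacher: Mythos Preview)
Your proposal is correct and follows exactly the route the paper indicates: the paper does not supply a detailed proof of this lemma but simply records, in the paragraph preceding it, that Rajala's weak local $L^2$-Poincar\'e inequality together with the doubling and geodesic properties yields the strong inequality via \cite[Theorem~1]{HajlaszKoskela1995}. Your write-up fleshes out precisely this reference chain, including the bookkeeping that makes the constant independent of $R$ when $K=0$.
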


Now we can apply the results obtained by Sturm in \cite[Proposition 2.3]{st2} to immediately deduce that there exist a heat kernel, i.e., a measurable map $(0,\infty)\times X\times X \ni (t,x,y)\mapsto p_t(x,y)\in [0,\infty)$ such that, for any $ t>0$, $f\in L^1(X)+L^\infty(X)$ and $\mu$-a.e. $x\in X$,
$$P_tf(x)=\int_X f(y)p_t(x,y)\,\d\mu(y);$$
for all $s,t>0$ and $\mu$-a.e. $x,y\in X$,
$$p_{t+s}(x,y)=\int_X p_t(x,z)p_s(z,y)\,\d\mu(z);$$
the function $(t,y)\mapsto p_t(x,y)$ is a solution of the equation $\Delta u=\frac{\partial}{\partial t}u$ on $(0,\infty)\times X$ in the distribution sense (see also Definition \ref{weaksolution} below). By the symmetry of the semi-group, $p_t$ is also symmetric, i.e., for every $t>0$, $p_t(x,y)=p_t(y,x)$ for $\mu\times\mu$-a.e. $(x,y)\in X\times X$. The doubling property and the local $L^2$-Poincar\'{e} inequality imply that the function $x\mapsto p_t(x,y)$ is H\"{o}lder continuous for every $(t,y)\in (0,\infty)\times X$, by a standard iteration argument; see e.g. \cite[Section 3]{st3}. Moreover, $P_t$ is stochastically complete (see e.g. \cite[Theorem 4]{stm1}), i.e.,
\begin{eqnarray}\label{stocom}
\int_Xp_t(x,y)\, \d\mu(y)=1,\quad\mbox{for any }t>0\mbox{ and }x\in X,
\end{eqnarray}
and, when $K=0$, the following upper and lower estimates of Gaussian type hold:
\begin{eqnarray}\label{fullgaussian}
\frac{C(N)^{-1}}{\mu(B(x,\sqrt t))}\exp\left\{-C(N)\frac{d^2(x,y)}{t}\right\}&\leq& p_t(x,y)\cr
&\leq&\frac{C_0(N)}{\mu(B(x,\sqrt t))}\exp\left\{-\frac{d^2(x,y)}{5t}\right\},
\end{eqnarray}
where $C(N)$ and $C_0(N)$ are positive constants depending only on $N$.

For any $K\in\R$ and $N\in(1,\infty)$, define the function $\tilde{\tau}_{K,N}: [0,\infty)\rightarrow\R$ by
\begin{equation*}
\tilde{\tau}_{K,N}(s)=
\begin{cases}
\frac{1}{N}\left[1+s\sqrt{K(N-1)}\cot\left(s\sqrt{\frac{K}{N-1}}\right)\right],\quad &{\hbox{if}}\,\ K>0,\\
1,\quad &{\hbox{if}}\,\ K=0,\\
\frac{1}{N}\left[1+s\sqrt{-K(N-1)}\coth\left(s\sqrt{\frac{-K}{N-1}}\right)\right],\quad &{\hbox{if}}\,\ K<0.
\end{cases}
\end{equation*}

N. Gigli proved the following Laplacian comparison principle in \cite{gi2012}. Here and in what follows, for $x\in X$, let $d_x=d(x,\cdot): X\rightarrow [0,\infty)$ be the distance function from the fix point $x$.
\begin{lemma}[Laplacian comparison principle]\label{lap-comp}
Let $(X,d,\mu)$ be an $\RCD(K,N)$ space with $K\in\R$ and $N\in (1,\infty)$.
Then, for every $o\in X$, $d_{o}\in \mathcal D_\loc(\Delta,\,X\setminus{o})$ and
$$\Delta d_{o}|_{X\setminus{o}}\leq \frac{N\tilde{\tau}_{K,N}(d_{o})-1}{d_{o}}.$$
\end{lemma}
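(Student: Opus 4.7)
The aim is to verify that for every non-negative $\psi\in\Lip_c(X\setminus\{o\})$,
$$-\int_X \langle \nabla d_o,\nabla\psi\rangle\,\d\mu \leq \int_X \psi(x)\,\frac{N\tilde\tau_{K,N}(d_o(x))-1}{d_o(x)}\,\d\mu(x),\qquad (*)$$
which by Definition \ref{glap} yields $d_o\in \mathcal D_\loc(\Delta,X\setminus\{o\})$ with the stated upper bound on $\Delta d_o$. To make sense of the left-hand side, note that $d_o$ is $1$-Lipschitz, hence $d_o\in W^{1,2}_\loc(X)$ with $|\nabla d_o|_w\leq 1$ $\mu$-a.e.; the matching lower bound $|\nabla d_o|_w = 1$ $\mu$-a.e.\ on $X\setminus\{o\}$ follows from the geodesic structure of $(X,d)$ combined with the identification of minimal weak upper gradients with relaxed local Lipschitz constants, available in the doubling-Poincar\'e setting of Lemmas \ref{doubling} and \ref{localPoicare}.

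Fix such a $\psi\geq 0$ and set $\eta_1:=\psi\mu/\int\psi\,\d\mu$; let $\eta_0^\ee$ be a smooth approximation of $\delta_o$ supported in $B(o,\ee)$. There is a unique $W_2$-geodesic $(\eta_t^\ee)_{t\in[0,1]}$ from $\eta_0^\ee$ to $\eta_1$, obtained by transporting the mass of $\eta_1$ along unit-speed radial geodesics aimed at $\supp\eta_0^\ee$; up to sign and additive normalization, $d_o$ serves as the Kantorovich potential. Apply the $\RCD^\ast(K,N)$ displacement-convexity inequality from Definition \ref{rcd} (with $N'=N$) to $(\eta_t^\ee)$ and differentiate at $t=1$. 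Via the continuity equation for $(\eta_t^\ee)$, the $t$-derivative of the R\'enyi functional $\int (\rho_t^\ee)^{1-1/N}\,\d\mu$ pairs with $\psi$ as a constant multiple of $\int\langle\nabla d_o,\nabla\psi\rangle\,\d\mu$. The logarithmic derivative at $t=1$ of $\tau_{K,N}^{(t)}(d_o)$, on the other hand, produces precisely $\tilde\tau_{K,N}(d_o)$, so rearranging the upper bound from the displacement-convexity inequality yields the integrand $(N\tilde\tau_{K,N}(d_o)-1)/d_o$. Letting $\ee\downarrow 0$ gives $(*)$.

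The principal difficulty is the rigorous Eulerian differentiation above, namely identifying the first variation of the R\'enyi entropy along a $W_2$-geodesic with the gradient pairing $\int\langle\nabla d_o,\nabla\psi\rangle\,\d\mu$ in a purely synthetic setting. Two routes are available. The first is to run the argument inside Gigli's non-smooth differential calculus from \cite{gi2012}, which supplies the weak Leibniz and continuity formulas needed to transfer the Lagrangian displacement statement into the required Eulerian pairing. The second, and perhaps more transparent, is to invoke the Cavalletti--Mondino 1D localization available in $\RCD^\ast(K,N)$ spaces: this disintegrates $\mu$ on $X\setminus\{o\}$ into 1D conditional measures supported on the radial geodesics emanating from $o$, each of $\CD(K,N)$ type. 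On every such ray the comparison reduces to the classical 1D volume estimate and its Riccati-type consequence, whose maximal solution issuing from $+\infty$ at $r=0$ is exactly $(N\tilde\tau_{K,N}(r)-1)/r$; reassembling the ray-by-ray inequalities via the disintegration then delivers $(*)$.
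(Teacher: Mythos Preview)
The paper does not supply a proof of this lemma at all: it is quoted verbatim as a result of Gigli, with the single sentence ``N.\ Gigli proved the following Laplacian comparison principle in \cite{gi2012}'' preceding the statement. So there is nothing in the paper to compare your argument against line by line.

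That said, your first route is exactly the strategy Gigli uses in \cite{gi2012}: interpret the $\CD(K,N)$ displacement convexity of the R\'enyi entropy along a $W_2$-geodesic emanating (essentially) from $\delta_o$, differentiate at the endpoint, and identify the first variation with the distributional Laplacian via his non-smooth calculus. Your outline is faithful in spirit, but a couple of points are imprecise. First, for the quadratic cost the Kantorovich potential associated with transport to a point is (up to sign) $d_o^2/2$, not $d_o$; Gigli obtains the comparison for $d_o$ by an additional chain-rule/composition step rather than by taking $d_o$ directly as the potential. Second, the phrase ``smooth approximation of $\delta_o$'' is out of place in a metric measure space and is not how Gigli proceeds; he works with the measure-valued Laplacian and test functions directly, avoiding any limit in $\ee$ of approximate Dirac masses. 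These are fixable, but as written your sketch would not compile into a proof without substantial input from \cite{gi2012}.

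Your second route, via Cavalletti--Mondino needle decomposition, is a genuinely different and later argument; it is not what the paper (via Gigli) relies on, and in fact it is not cited anywhere in the paper. It does give the comparison, arguably more transparently on each ray, at the cost of importing the full $1$D-localization machinery.
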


Finally, in what follows, we let $\mathbb{N}=\{1,2,\cdots\}$ and $B(x,r)$ be the ball in $(X,d)$ with center $x$ and radius $r>0$.

\section{Sharp heat kernel bounds}
\hskip\parindent The following lemmata are important in the establishment of the sharp heat kernel bounds presented below. Let $\Omega$ be an open subset of $(X,d)$.
\begin{definition}\label{weaksolution}
Let $I$ be an open interval in $\R$, and $g\in L^2(\Omega)$. We call that a function $u: I\rightarrow W^{1,2}(\Omega)$ satisfies the parabolic equation
$$\frac{\partial}{\partial t}u-\Delta u\leq g,\quad\mbox{in }I\times \Omega,$$
if for every $t\in I$, the Fr\'{e}chet derivative of $u$, denoted by $\frac{\partial}{\partial t}u$, exists in $L^2(\Omega)$  and for any nonnegative function $\psi\in W^{1,2}(\Omega)$, it holds
$$\int_\Omega \frac{\partial}{\partial t}u(t,\cdot)\psi\,\d\mu + \mathcal{E}\big(u(t,\cdot),\psi\big)\leq \int_\Omega g\psi\,\d\mu.$$
\end{definition}
In a similar way, one can define the solution to the parabolic equations $\frac{\partial}{\partial t}u-\Delta u\geq g$ and $\frac{\partial}{\partial t}u-\Delta u=g$ in $I\times \Omega$.

The first lemma is on the parabolic maximum principle for the heat equation. The proof is essentially from \cite[Section 4.1]{GrigoryanHu2008} and can be simplified a little bit in our context. So we omit the proof here. We shall point out that the metric measure space $(X,d,\mu)$ is also locally compact under the $\RCD(K,N)$ condition with $K\in\R$ and $N\in(1,\infty)$ (see \cite[Corollary 2.4]{Sturm2006b}).
\begin{lemma}[Parabolic maximum principle]\label{para-max}
Let $(X,d,\mu)$ be an $\RCD(K,N)$ space with $K\in \R$ and $N\in (1,\infty)$. Fix $T\in (0,\infty]$. Assume that a function $u: (0,T)\rightarrow W^{1,2}(\Omega)$, with $u_+(t,\cdot)=\max\{u(t,\cdot),0\}\in W^{1,2}(\Omega)$ for any $t\in (0,T)$, satisfies the following equation with initial value condition:
\begin{equation*}
\begin{cases}
\frac{\partial}{\partial t}u-\Delta u\leq 0,\quad &{\hbox{in}}\,\ (0,T)\times\Omega,\\
u_+(t,\cdot)\rightarrow 0,\quad &{\hbox{in}}\,\ L^2(\Omega)\,\ \hbox{as}\,\ t\rightarrow0.
\end{cases}
\end{equation*}
Then $u(t,x)\leq0$ for any $t$ in $(0,T)$ and $\mu$-a.e. $x$ in $\Omega$.
\end{lemma}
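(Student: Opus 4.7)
The plan is to test the parabolic inequality against the nonnegative function $\psi = u_+(t,\cdot)$, which by hypothesis belongs to $W^{1,2}(\Omega)$, and to extract from this a monotonicity statement for the $L^2$-norm of $u_+$. Substituting into the defining inequality of Definition \ref{weaksolution} with $g\equiv 0$ yields, for every $t\in(0,T)$,
\[
\int_\Omega \frac{\partial u}{\partial t}(t,\cdot)\,u_+(t,\cdot)\,\d\mu + \mathcal{E}\bigl(u(t,\cdot),\,u_+(t,\cdot)\bigr)\leq 0.
\]

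First I would handle the time-derivative term via the standard chain rule for Fr\'echet derivatives in $L^2$ applied to the Lipschitz truncation $s\mapsto s_+$: since $\partial_t u$ exists as an element of $L^2(\Omega)$, the map $t\mapsto \|u_+(t,\cdot)\|_{L^2(\Omega)}^2$ is absolutely continuous and
\[
\frac{d}{dt}\|u_+(t,\cdot)\|_{L^2(\Omega)}^2 \;=\; 2\int_\Omega u_+(t,\cdot)\,\frac{\partial u}{\partial t}(t,\cdot)\,\d\mu.
\]
For the Dirichlet form term I would exploit infinitesimal Hilbertianity together with the locality properties of the minimal weak upper gradient and of the inner product $\langle\nabla\cdot,\nabla\cdot\rangle$, which give $|\nabla u_+|_w = |\nabla u|_w\,\chi_{\{u>0\}}$ and $\langle \nabla u,\nabla u_+\rangle = |\nabla u_+|_w^2$ $\mu$-a.e.\ in $\Omega$. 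Hence $\mathcal{E}(u,u_+) = \mathcal{E}(u_+,u_+) \geq 0$, and combining with the previous identity gives
\[
\frac{1}{2}\frac{d}{dt}\|u_+(t,\cdot)\|_{L^2(\Omega)}^2 \;\leq\; -\mathcal{E}\bigl(u_+(t,\cdot),u_+(t,\cdot)\bigr)\;\leq\; 0.
\]

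Thus $t\mapsto \|u_+(t,\cdot)\|_{L^2(\Omega)}^2$ is nonincreasing on $(0,T)$, and together with the initial condition $\|u_+(t,\cdot)\|_{L^2(\Omega)}\to 0$ as $t\to 0$, this forces $u_+(t,\cdot)\equiv 0$ in $L^2(\Omega)$ for every $t\in(0,T)$, which yields the desired pointwise conclusion.

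The main technical obstacle is the rigorous justification of the chain rule for $\|u_+\|_{L^2(\Omega)}^2$ when $u$ is only assumed to possess an $L^2$-valued Fr\'echet derivative, with no additional time-regularity. The standard device, followed in \cite[Section 4.1]{GrigoryanHu2008}, is to approximate $s\mapsto s_+^2$ by a sequence of $C^1$ convex functions $\Phi_\epsilon$ with bounded Lipschitz derivatives, apply the chain rule to $\Phi_\epsilon\circ u$ pointwise in $t$ (where it reduces to the unproblematic identity $\int_\Omega \Phi_\epsilon'(u)\,\partial_t u\,\d\mu = \frac{d}{dt}\int_\Omega \Phi_\epsilon(u)\,\d\mu$), and then pass to the limit $\epsilon\downarrow 0$ by dominated convergence in $\mu$ and by the absolute continuity in $t$; a parallel regularization is used on the Dirichlet-form side to ensure that $\Phi_\epsilon'(u)$ is an admissible test function in $W^{1,2}(\Omega)$ and that $\mathcal{E}(u,\Phi_\epsilon'(u))\to\mathcal{E}(u,u_+)$.
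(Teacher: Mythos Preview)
Your argument is correct and follows essentially the same route as the paper's proof (included after the bibliography in the source): test the parabolic inequality against a nonnegative function of $u$ that vanishes on $\{u\le 0\}$, use $\mathcal{E}(u,\psi)\ge 0$, and deduce monotonicity of an $L^2$-type quantity that vanishes at $t=0^+$. The only cosmetic difference is the regularization device: the paper fixes a single smooth $\eta\in C^\infty(\R)$ with $\eta=0$ on $(-\infty,0]$, $\eta'\in[0,1]$, tests against $\psi=\eta(u)$, and tracks the quantity $\|\phi(u)\|_{L^2}^2$ with $\phi(s)=\bigl(\int_0^s\eta\bigr)^{1/2}$; since the antiderivative $\Phi(s)=\int_0^s\eta$ is $C^2$ with bounded $\Phi'=\eta$, the chain rule $\frac{d}{dt}\int_\Omega\Phi(u)\,\d\mu=\int_\Omega\eta(u)\,\partial_t u\,\d\mu$ is immediate and no limiting procedure in $\epsilon$ is needed. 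Your version instead approximates $s\mapsto s_+^2$ by $\Phi_\epsilon$ and passes to the limit, which is equally standard; the paper's choice just absorbs the approximation into a single smooth $\eta$ from the outset.
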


As an application of the Laplacian comparison principle in Lemma \ref{lap-comp} and the parabolic maximum principle in Lemma \ref{para-max}, we derive the following heat kernel comparison results, which generalize the results obtained by Cheeger--Yau \cite{ChYau1981} and Li--Yau \cite{LiYau}. We should mention that the proof is more or less standard, which we present here for the sake of completeness. Let $B(p,r)$ denote the ball in $X$ with center $p$ and radius $r$ with respect to the metric $d$, and let $\mathbb{M}^{K,N}$ be the complete and simply connected space form with sectional curvature $K\in\R$ and dimension $N\in\mathbb{N}$. For any $t>0$, denote by $p_t: \Omega\times\Omega\rightarrow\R$ the Dirichlet heat kernel on $\Omega$, and by $\bar{p}_t: \bar{B}(\bar{x},r)\times \bar{B}(\bar{x},r)\rightarrow \R$ the Dirichlet heat kernel on $\bar{B}(\bar{x},r)$, which is a geodesic ball with center $\bar{x}\in\mathbb{M}^{K,N}$ and radius $r>0$ with respect to the distance $\bar{d}$ in $\mathbb{M}^{K,N}$. For $\bar{z}\in \mathbb{M}^{K,N}$, let $\d\bar{z}$ be the volume measure in $\mathbb{M}^{K,N}$.
\begin{lemma}[Heat kernel comparison]\label{kernelcomparison}
Let $(X,d,\mu)$ be an $\RCD((N-1)K,N)$ space with $K\in \R$ and $N\in \mathbb{N}\setminus\{1\}$, and let $B(x,r)$ be a ball with center $x\in X$ and radius $r>0$ with respect to $d$. Suppose the function $h_0$ belongs to $C^1([0,r], [0,\infty))$ and satisfies that $h_0'(0)=0$, $h_0(r)=0$ and $h'(s)\leq0$ for any $s\in (0,r]$. Let $\bar{B}(\bar{x},r)$ be a ball with center $\bar{x}\in M^{K,N}$.  Then, for any $t>0$ and $y\in B(x,r)$ with $\bar{y}\in \bar{B}(\bar{x},r)$ such that $d(x,y)=\bar{d}(\bar{x},\bar{y})$,
$$\int_X p_t(z,y)h_0(d(x,z))\,\d\mu(z)\geq \int_{\bar{B}(\bar{x},r)}\bar{p}_t(\bar{z},\bar{y})h_0(\bar{d}(\bar{x},\bar{z}))\,\d\bar{z};$$
in particular,
$$\int_{B(x,\lambda)}p_t(z,y)\,\d\mu(z)\geq \int_{\bar{B}(\bar{x},\lambda)}\bar{p}_t(\bar{z},\bar{y})\,\d\bar{z},\quad\mbox{for any }\lambda\in [0,r],$$
and
$$p_t(x,y)\geq \bar{p}_t(\bar{x},\bar{y}).$$
\end{lemma}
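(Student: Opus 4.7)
The strategy is the classical Cheeger--Yau comparison, adapted to the $\RCD$ setting: use the rotational symmetry of the Dirichlet problem on the model ball $\bar B(\bar x,r)\subset\mathbb M^{K,N}$ to produce a radial subsolution of the heat equation on $B(x,r)$, and then invoke the parabolic maximum principle, Lemma~\ref{para-max}. Accordingly, I would set
\[
F(t,y):=\int_X p_t(z,y)\,h_0(d(x,z))\,\d\mu(z),\qquad \widetilde F(t,s):=\int_{\bar B(\bar x,r)}\bar p_t(\bar z,\bar y)\,h_0(\bar d(\bar x,\bar z))\,\d\bar z,
\]
where in the second definition $s=\bar d(\bar x,\bar y)\in[0,r]$; rotational invariance of $\mathbb M^{K,N}$ about $\bar x$, of $\bar B(\bar x,r)$, and of the initial datum forces the integral to depend on $\bar y$ only through $s$. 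Smoothness of the Dirichlet heat kernel on $\bar B(\bar x,r)$ then yields $\widetilde F\in C^\infty((0,\infty)\times[0,r])$ with $\widetilde F(t,r)=0$ and $\widetilde F_s(t,0)=0$. The monotonicity $\widetilde F_s\le 0$, which I shall need in the next step, follows by applying the one-dimensional parabolic maximum principle to $\widetilde F_s$ itself: it is non-positive at the two radial endpoints, non-positive at $t=0$ (since $h_0'\le 0$), and solves a linear parabolic equation on $(0,r)$.

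The key step is to show that the pullback $G(t,y):=\widetilde F(t,d(x,y))$ is a subsolution of the heat equation on $(0,\infty)\times B(x,r)$. Since $|\nabla d_x|_w=1$ $\mu$-a.e.\ on $X\setminus\{x\}$, the chain rule for the $\RCD$ inner product yields, in the distributional sense on $B(x,r)\setminus\{x\}$,
\[
\Delta G=\widetilde F_{ss}(t,d_x)+\widetilde F_s(t,d_x)\,\Delta d_x.
\]
Lemma~\ref{lap-comp}, applied with curvature parameter $(N-1)K$, gives $\Delta d_x\le (N\tilde\tau_{(N-1)K,N}(d_x)-1)/d_x$, and an elementary computation from the explicit form of $\tilde\tau$ identifies this upper bound with the coefficient of the radial Laplacian on $\mathbb M^{K,N}$, namely $(N-1)/s$ when $K=0$, $(N-1)\sqrt{-K}\coth(s\sqrt{-K})$ when $K<0$, and $(N-1)\sqrt{K}\cot(s\sqrt{K})$ when $K>0$. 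Because $\widetilde F_s\le 0$, multiplication reverses the inequality, and combining with the radial heat equation satisfied by $\widetilde F$ on $\mathbb M^{K,N}$ produces $\Delta G\ge \partial_t G$, i.e., $\partial_tG-\Delta G\le 0$. The apparent singularity of the chain rule at $y=x$ is harmless since $\widetilde F_s(t,0)=0$, which is precisely where the hypothesis $h_0'(0)=0$ is used.

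To finish, observe that $F=P_t\bigl(h_0(d(x,\cdot))\bigr)$ is a non-negative genuine solution of the heat equation on $(0,\infty)\times X$ with $F(0,\cdot)=h_0(d(x,\cdot))$, lying in $W^{1,2}(X)$ for every $t>0$. At $t=0$ one has $G(0,\cdot)=F(0,\cdot)$ on $B(x,r)$, while on $\partial B(x,r)$ the Dirichlet condition gives $G\equiv 0\le F$. Hence $u:=G-F$ satisfies $\partial_tu-\Delta u\le 0$ on $(0,\infty)\times B(x,r)$, $u(0,\cdot)=0$, and $u_+\equiv 0$ on $\partial B(x,r)$, so that the zero extension of $u_+$ lies in $W^{1,2}(B(x,r))$ and tends to zero in $L^2$ as $t\to 0$. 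Applying Lemma~\ref{para-max} gives $u\le 0$, which is the main inequality $F\ge G$. The two ``in particular'' corollaries then follow by standard approximation: picking smooth decreasing $h_0^{(n)}$ with $h_0^{(n)}(r)=0$ converging pointwise to $\mathbf 1_{[0,\lambda]}$ and using dominated convergence yields the integrated ball comparison; dividing that inequality by $\mu(B(x,\lambda))$, letting $\lambda\downarrow 0$, and invoking continuity of $p_t,\bar p_t$ together with Bishop--Gromov volume monotonicity produces the pointwise bound $p_t(x,y)\ge\bar p_t(\bar x,\bar y)$. The most delicate point is the rigorous justification of the chain-rule identity for $\Delta G$ across $y=x$ and the reconciliation of the distributional $\RCD$ Laplacian comparison with the smooth radial operator on $\mathbb M^{K,N}$; both reduce to standard approximation once $h_0'(0)=0$ and $\widetilde F_s(t,0)=0$ are exploited.
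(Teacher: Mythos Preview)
Your proposal is correct and follows essentially the same Cheeger--Yau strategy as the paper: solve the heat equation with radial initial datum on both $X$ and the model ball, show the model solution has non-positive radial derivative, transplant it to $X$ via the Laplacian comparison (Lemma~\ref{lap-comp}) to obtain a subsolution, and apply the parabolic maximum principle (Lemma~\ref{para-max}). The only point the paper makes explicit which you leave implicit is the verification that the zeroth-order coefficient in the equation for $\widetilde F_s$, namely $(\bar\Delta\bar d_{\bar x})'$, is non-positive; without that sign the one-dimensional maximum principle you invoke does not immediately give $\widetilde F_s\le 0$, so you should record this computation.
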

\begin{proof}
Let $h_0(\cdot)=h_0\circ d_x$ be a nonnegative function of the distance $d$ to the fixed point $x\in X$, and let
$$h(t,x)=\int_X p_t(x,z)h_0(z)\,\d\mu(z)$$
be the solution to the heat equation in $X$ with initial data $h_0$, i.e.,
\begin{equation*}
\begin{cases}
\frac{\partial}{\partial t}u-\Delta u=0,\quad &{\hbox{in}}\,\ (0,T)\times X,\\
u_+(t,\cdot)\rightarrow h_0,\quad &{\hbox{in}}\,\ L^2(X)\,\ \hbox{as}\,\ t\rightarrow0.
\end{cases}
\end{equation*}
Let
$$\bar{h}(t,\bar{y})=\int_{\bar{B}(\bar{x},r)}\bar{p}_t(\bar{y},\bar{z})h_0(\bar{d}(\bar{x},\bar{z}))\,\d\bar{z}$$
be the solution to the heat equation on $\bar{B}(\bar{x},r)$ with the Dirichlet boundary condition. It is easy to know that the rotational symmetry of $h_0$ implies that the function $\bar{h}(t,\cdot)$ is also rotationally symmetric for every $t>0$. Hence, we can write $\bar{h}(t,\bar{y})=\bar{h}(t,\bar{d}(\bar{x},\bar{y}))$. We claim that
\begin{eqnarray}\label{barlap-0}\bar{h}'(t,\bar{r}):=\frac{\partial}{\partial \bar{r}}\bar{h}(t,\bar{r})\leq 0,\quad\mbox{for all }t\geq 0\mbox{ and }\bar{r}\in [0,r].
\end{eqnarray}
Indeed, letting $\bar{\Delta}$ be the Laplacian on $\mathbb{M}^{K,N}$, we have that
\begin{eqnarray}\label{barlap-1}
0=\left(\frac{\partial}{\partial t}-\bar{\Delta}\right)\bar{h}(t,\bar{r})=\frac{\partial}{\partial t}\bar{h}-\bar{h}'\bar{\Delta}\bar{d}_{\bar{x}} -\bar{h}''
\end{eqnarray}
with
\begin{equation*}
\bar{\Delta}\bar{d}_{\bar{x}}=
\begin{cases}
(N-1)\sqrt{K}\cot\left(\sqrt{K} \bar{d}_{\bar{x}}\right),\quad &{\hbox{if}}\,\ K>0,\\
(N-1)\bar{d}_{\bar{x}}^{-1},\quad &{\hbox{if}}\,\ K=0,\\
(N-1)\sqrt{-K}\coth\left(\sqrt{-K}\bar{d}_{\bar{x}}\right),\quad &{\hbox{if}}\,\ K<0,
\end{cases}
\end{equation*}
in the distribution sense. By direct differentiation with respect to $\bar{d}_{\bar{x}}$, we derive from \eqref{barlap-1} that
$$0=\frac{\partial}{\partial t}v-v'\bar{\Delta}\bar{d}_{\bar{x}}-v(\bar{\Delta}\bar{d}_{\bar{x}})'-v''$$
with $v=\bar{h}'$ and
\begin{equation*}
(\bar{\Delta}\bar{d}_{\bar{x}})'=
\begin{cases}
-(N-1)K\csc^2\left(\sqrt{K} \bar{d}_{\bar{x}}\right),\quad &{\hbox{if}}\,\ K>0,\\
-(N-1)\bar{d}_{\bar{x}}^{-2},\quad &{\hbox{if}}\,\ K=0,\\
(N-1)K\, \csch^2 \left(\sqrt{-K}\bar{d}_{\bar{x}}\right),\quad &{\hbox{if}}\,\ K<0,
\end{cases}
\end{equation*}
which implies that $(\bar{\Delta}\bar{d}_{\bar{x}})'\leq 0$. Thus, by the assumption on $h_0$, we prove the claim \eqref{barlap-0}.

Now let $\bar{h}(t,y)=\bar{h}(t,d(x,y))$. Combining the Laplacian  comparison principle in Lemma \ref{lap-comp}, \eqref{barlap-0} and \eqref{barlap-1}, we obtain that
\begin{eqnarray*}
\left(\frac{\partial}{\partial t}-\Delta\right)\bar{h}(t,d(x,\cdot))&=&\frac{\partial}{\partial t}\bar{h}-\bar{h}'\Delta d_x -\bar{h}''\\
&\leq&\frac{\partial}{\partial t}\bar{h}-\bar{h}'\Delta \bar{d}_{\bar{x}} -\bar{h}''=0
\end{eqnarray*}
holds in the distribution sense in $X$, and $\bar{h}(0,y)=h_0(y)$ for any $y\in X$. Denote $G=\bar{h}-h$. Then $G$ satisfies the equation
\begin{equation*}
\begin{cases}
\frac{\partial}{\partial t}u-\Delta u\leq0,\quad &{\hbox{in}}\,\ (0,\infty)\times B(x,r),\\
u_+(t,\cdot)\rightarrow 0,\quad &{\hbox{in}}\,\ L^2(B(x,r))\,\ \hbox{as}\,\ t\rightarrow0.
\end{cases}
\end{equation*}
Thus, the parabolic maximum principle in Lemma \ref{para-max} implies that
$$\bar{h}(t,d(x,y))\leq h(t,y),$$
for any $t>0$ and $\mu$-a.e. $y$ in $B(x,r)$.

In particular, for the second and last assertions, we need to approximate the characteristic function of $B(x,\lambda)$ and the dirac function at $x$ with a sequence of functions satisfy the requirement of $h_0$, respectively.
\end{proof}

The following result is borrowed from \cite{jia14} (see also \cite{GarofaloMondino} for the case when the reference measure $\mu$ is a Borel probability measure), and we present it here for convenience.
\begin{lemma}[Parabolic Harnack inequality]\label{para-harnack}
Let $(X,d,\mu)$ be an $\RCD(0,N)$ space with $N\in (1,\infty)$.
Given any $x,y,z\in M$ and $0<s<t<\infty$, it holds that
$$p_s(x,y)\leq p_t(x,z)\exp\left\{\frac{d(y,z)^2}{4(t-s)}\right\}\left(\frac t s\right)^{N/2}.$$
\end{lemma}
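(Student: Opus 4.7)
The plan is to imitate the classical Li--Yau argument, the essential input being a pointwise (dimensional) gradient estimate for positive solutions of the heat equation in the $\RCD(0,N)$ setting. Fixing $x\in X$ and writing $u(t,\cdot):=p_t(x,\cdot)$, one has $u>0$ and, $\mu$-a.e.,
$$\frac{|\nabla u|_w^2}{u^2}-\frac{\partial_t u}{u}\leq \frac{N}{2t},\qquad t>0.$$
This inequality has been established in the $\RCD(0,N)$ framework via the Bakry--Ledoux $\Gamma_2$-calculus and is indeed one of the main results of \cite{jia14} (cf.\ \cite{GarofaloMondino}). In addition, the same analytic setup ensures that $p_t(x,\cdot)$ is locally Lipschitz in space and that $t\mapsto p_t(x,\cdot)$ is $C^1((0,\infty);L^2)$ with derivative $\Delta p_t(x,\cdot)$, which is what will justify the chain-rule computation below.

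Fix $x,y,z\in X$ and $0<s<t<\infty$. Since $(X,d)$ is a proper geodesic space, choose a constant-speed minimizing geodesic $\gamma:[0,1]\to X$ from $y$ to $z$, so that $|\dot\gamma_r|\equiv d(y,z)$, and set $\tau(r):=s+r(t-s)$. Define
$$\phi(r):=\log p_{\tau(r)}\bigl(x,\gamma(r)\bigr),\qquad r\in[0,1].$$
By the chain rule, together with the definition of the inner product of gradients recorded in Section 2,
$$\phi'(r)=(t-s)\,\frac{\partial_\tau u}{u}\bigl(\tau(r),\gamma(r)\bigr)+\bigl\langle\nabla\log u,\dot\gamma_r\bigr\rangle \quad \text{for a.e.}\ r\in[0,1].$$
Estimating the second term from below by $-|\nabla\log u|_w\,d(y,z)$ via Cauchy--Schwarz, and the first from below by the Li--Yau inequality, and setting $\alpha:=|\nabla\log u|_w(\tau(r),\gamma(r))$, one obtains
$$\phi'(r)\geq (t-s)\alpha^2-d(y,z)\,\alpha-\frac{N(t-s)}{2\tau(r)}\geq -\frac{d(y,z)^2}{4(t-s)}-\frac{N(t-s)}{2\tau(r)},$$
the second inequality coming from minimizing the quadratic in $\alpha$.

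Integrating over $[0,1]$ and using the elementary identity $\int_0^1 (t-s)/\tau(r)\,dr=\log(t/s)$ produces
$$\log\frac{p_t(x,z)}{p_s(x,y)}=\phi(1)-\phi(0)\geq -\frac{d(y,z)^2}{4(t-s)}-\frac{N}{2}\log\frac{t}{s},$$
which upon exponentiation is precisely the claimed Harnack inequality.

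The main obstacle is justifying the chain rule and the pointwise manipulations for $\phi$ in the non-smooth $\RCD$ setting, since $p_t(x,\cdot)$ is only known to be locally Lipschitz. This is handled by the $W^{1,2}$-calculus developed in \cite{gi2012,agmr2015}: along any Lipschitz curve $\gamma$ the composition $r\mapsto\log u(\tau(r),\gamma(r))$ is absolutely continuous with a.e.\ derivative given by the formula above, where $\langle\nabla\log u,\dot\gamma_r\rangle$ is interpreted through the infinitesimally Hilbertian inner product extended to the tangent vectors of $\gamma$. Once that framework is in place, the argument reduces to the pointwise Li--Yau bound and a one-variable quadratic minimization, which is exactly the route taken in \cite{jia14}.
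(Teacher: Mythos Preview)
The paper does not supply a proof of this lemma; it simply quotes the result from \cite{jia14} (with \cite{GarofaloMondino} for the probability-measure case). Your sketch reproduces the classical Li--Yau route---integrate the differential Harnack inequality along a space-time geodesic and minimize a quadratic---which is exactly the strategy underlying the cited references, so in substance you are aligned with the source.

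One caution on the step you yourself flag. In an $\RCD$ space there is no canonical pairing $\langle\nabla f,\dot\gamma_r\rangle$ for an \emph{individually chosen} curve: the test-plan machinery of Definition~\ref{sobolev} gives the upper-gradient inequality only along $\pi$-a.e.\ curve, and the Li--Yau bound holds only $\mu$-a.e. Your appeal to ``the infinitesimally Hilbertian inner product extended to the tangent vectors of $\gamma$'' is not an available tool as stated. What saves the argument is that you only need the one-sided estimate $(\log u\circ\gamma)'(r)\ge -|\nabla\log u|_w(\gamma_r)\,|\dot\gamma_r|$, and this can be recovered by working with the pointwise Lipschitz constant (an honest upper gradient along every rectifiable curve, equal $\mu$-a.e.\ to $|\nabla\cdot|_w$ in doubling--Poincar\'e spaces) together with the space-time continuity of the heat kernel; alternatively, the cited references bypass the single-curve issue via a Hopf--Lax/Kuwada-duality argument. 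So your outline is correct, but the sentence you wrote does not actually close the gap.
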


Now we recall the definition of the Minkowski content which will be used in the proof of Theorem \ref{main} below.
\begin{definition}[Minkowski content]
Let $x\in X$ and $r\in [0,\infty)$. Define the  Minkowski content of the ball $B(x,r)$ by
$$s(x,r):=\limsup_{\delta\rightarrow 0}\frac 1\delta \mu\big(B(x ,r+\delta)\setminus B(x ,r)\big).$$
\end{definition}

We remark here that, in the Riemannian manifold $(M,d,\mu)$ with $\mu$ the Riemannian volume measure and $d$ the Riemannian distance, it is immediate to see that, for every geodesic ball $B(x,r)$ in $(M,d)$,  $s(x ,r)$ is equal to the $(n-1)$-dimensional Hausdorff measure of $\partial B(x ,r)$ (see e.g. \cite{Chavel1993}).

The first part in the next lemma is known (see \cite[Theorem 2.3]{Sturm2006b}), and the second part is immediate from the last definition and the local Lipschitz continuity of the function $r\mapsto\mu(B(x,r))$ in $(0,\infty)$, for each $x\in X$ (see e.g. \cite[p.148]{Sturm2006b}).
\begin{lemma}\label{volume-growth}
Let $(X,d,\mu)$ be a $RCD(0,N)$ spaces with $N\in (1,\infty)$. Then for all $x\in X$ and $0<r\leq R<\infty$, it holds
$$\frac{s(x,R)}{s(x,r)}\le \left(\frac Rr\right)^{N-1},$$
and
$$\mu(B(x,R))=\int_0^Rs(x,t)\,\d t.$$
\end{lemma}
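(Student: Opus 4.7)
The plan is to handle the two claims separately: part~(i) follows directly from Sturm's Bishop--Gromov sphere comparison, while part~(ii) combines the local Lipschitz regularity of the volume function with the fundamental theorem of calculus and a pointwise identification of $V'$ with the Minkowski content.

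For part~(i), I would use that the $\RCD(0,N)$ assumption implies the reduced curvature-dimension condition $\CD^\ast(0,N)$, and invoke the Bishop--Gromov surface-area monotonicity (Theorem~2.3 of \cite{Sturm2006b}): for every fixed $x\in X$ the map $\rho\mapsto s(x,\rho)/\rho^{N-1}$ is non-increasing on $(0,\infty)$. Evaluating this monotonicity at $\rho=r$ and $\rho=R$ with $r\le R$ and rearranging immediately yields $s(x,R)/s(x,r)\le(R/r)^{N-1}$.

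For part~(ii), set $V(\rho):=\mu(B(x,\rho))$. I would first recall that $V$ is non-decreasing and locally Lipschitz continuous on $(0,\infty)$, with $V(0^+)=0$: the Lipschitz property is recorded on p.~148 of \cite{Sturm2006b} and stems from the global doubling of Lemma \ref{doubling}(i), while the vanishing $V(0^+)=0$ reflects the non-atomicity of $\mu$ in the $\RCD(0,N)$ setting with $N<\infty$. Consequently $V$ is absolutely continuous on $[0,R]$, and the fundamental theorem of calculus gives
\[
\mu(B(x,R))=V(R)=\int_0^R V'(t)\,\d t.
\]
The final step is to identify $V'(t)=s(x,t)$ at Lebesgue-almost every $t\in(0,R)$: at each point of differentiability of $V$ the right-sided quotient $[V(t+\delta)-V(t)]/\delta$ admits a genuine limit as $\delta\da 0$, which then coincides with its $\limsup$, i.e., with $s(x,t)$. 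Substituting into the display delivers the identity $\mu(B(x,R))=\int_0^R s(x,t)\,\d t$.

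The only step of genuine substance is the a.e.\ identification $V'(t)=s(x,t)$. One always has $V'(t)\le s(x,t)$ since $s$ is defined via a $\limsup$; the matching inequality at points of differentiability is simply the elementary fact that a genuine limit equals its $\limsup$. Everything else is bookkeeping about absolute continuity and doubling, which is why the author characterizes the second part as ``immediate''. The Riemannian remark identifying $s(x,r)$ with the $(N-1)$-dimensional Hausdorff measure of $\partial B(x,r)$ plays no logical role and serves only as orientation for the reader.
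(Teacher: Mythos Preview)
Your proposal is correct and aligns with the paper's own treatment: the paper does not give a detailed proof but simply cites \cite[Theorem~2.3]{Sturm2006b} for the first inequality and declares the second identity ``immediate from the last definition and the local Lipschitz continuity of the function $r\mapsto\mu(B(x,r))$'' (referring to \cite[p.~148]{Sturm2006b}). Your argument is precisely the natural unpacking of that sentence---Bishop--Gromov for (i), absolute continuity plus the a.e.\ identification of $V'$ with the $\limsup$ quotient for (ii)---so there is no methodological difference to report.
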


Now we recall the definition of maximum volume growth.
\begin{definition}[Maximum volume growth]\label{def-of-maxvol}
Let $(X,d,\mu)$ be an $\RCD(0,N)$ space with $N\in (1,\infty)$. It is said to have maximum volume growth if, for some point $x\in X$, there exists a constant $\kappa_\infty>0$ such that
\begin{eqnarray}\label{max-vol-1}
\liminf_{r\rightarrow\infty}\frac{\mu(B(x,r))}{r^{N}}=\kappa_\infty.
\end{eqnarray}
\end{definition}
\begin{remark} It is easy to show that the limit $\liminf_{r\rightarrow\infty}[\mu(B(x,r))/r^N]$ is independent of $x$; hence it can be considered as a global geometric invariant of $(X,d,\mu)$.
\end{remark}

\begin{lemma}\label{lowerboundary}
Let $(X,d,\mu)$ be a $RCD(0,N)$ spaces with $N\in (1,\infty)$ having maximum volume growth. Then
$$\liminf_{r\rightarrow\infty}\frac{s(x,r)}{Nr^{N-1}}=\kappa_\infty;$$
moreover, for any $r>0$, it holds that
$$s(x,r)\ge N\kappa_\infty r^{N-1}.$$
\end{lemma}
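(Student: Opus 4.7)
My plan is to exploit the monotonicity hidden in Lemma \ref{volume-growth}, namely that the function $r\mapsto s(x,r)/r^{N-1}$ is non-increasing on $(0,\infty)$. This is just a restatement of the bound $s(x,R)/s(x,r)\le (R/r)^{N-1}$, since dividing both sides by $(R/r)^{N-1}$ gives $s(x,R)/R^{N-1}\le s(x,r)/r^{N-1}$. Consequently,
$$\alpha:=\lim_{r\to\infty}\frac{s(x,r)}{r^{N-1}}\in [0,\infty)$$
exists, and by monotonicity $s(x,r)\ge \alpha r^{N-1}$ for every $r>0$. The entire problem therefore reduces to identifying $\alpha$ with $N\kappa_\infty$.

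The key step is to push this information through the integral identity $\mu(B(x,r))=\int_0^r s(x,t)\,\d t$ from Lemma \ref{volume-growth} in order to match with the maximum-volume-growth hypothesis \eqref{max-vol-1}. Fix $\epsilon>0$ and choose $R>0$ so large that $s(x,t)/t^{N-1}\le\alpha+\epsilon$ for all $t\ge R$. Splitting the integral and using the lower bound $s(x,t)\ge \alpha t^{N-1}$ gives, for $r>R$,
$$\frac{\alpha}{N}\le\frac{1}{r^N}\int_0^r s(x,t)\,\d t\le \frac{1}{r^N}\int_0^R s(x,t)\,\d t+\frac{\alpha+\epsilon}{N}\left(1-\frac{R^N}{r^N}\right).$$
Letting $r\to\infty$ and then $\epsilon\to 0$, I obtain $\lim_{r\to\infty}\mu(B(x,r))/r^N=\alpha/N$. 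Combining this with \eqref{max-vol-1} forces $\alpha/N=\kappa_\infty$, i.e.\ $\alpha=N\kappa_\infty$. This proves the first assertion (the $\liminf$ is in fact a genuine limit), and the second assertion is then the pointwise inequality $s(x,r)\ge \alpha r^{N-1}=N\kappa_\infty r^{N-1}$ already recorded in step one.

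I do not anticipate a serious obstacle: the argument is the standard Bishop--Gromov-type scheme of converting a volume comparison into a boundary comparison, and everything needed (the monotone comparison and the layer-cake identity) is packaged in Lemma \ref{volume-growth}. The only point to be careful about is the logical structure, namely that the monotonicity of $s(x,r)/r^{N-1}$ is what allows the $\liminf$ of $\mu(B(x,r))/r^N$ to be upgraded to a limit, so that the equality $\alpha=N\kappa_\infty$ is actually forced rather than merely consistent.
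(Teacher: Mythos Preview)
Your proof is correct and uses the same two ingredients as the paper---the monotonicity of $r\mapsto s(x,r)/r^{N-1}$ and the identity $\mu(B(x,r))=\int_0^r s(x,t)\,\d t$ from Lemma~\ref{volume-growth}---but the organization differs. The paper treats the two directions separately: for the upper bound on $s(x,r)/(Nr^{N-1})$ it invokes the doubling property (Lemma~\ref{doubling}(i)) to get $s(x,r)\le N\mu(B(x,r))/r$ and then feeds in $\mu(B(x,r))\le(1+\epsilon)\kappa_\infty r^N$; for the lower bound it argues by contradiction, assuming $s(x,r_0)<N\kappa_\infty r_0^{N-1}$ and integrating to violate \eqref{max-vol-1}. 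You instead observe first that the monotone limit $\alpha=\lim_{r\to\infty}s(x,r)/r^{N-1}$ exists, and then sandwich $\mu(B(x,r))/r^N$ between $\alpha/N$ and $(\alpha+\epsilon)/N$ plus a vanishing term, forcing $\alpha=N\kappa_\infty$ in one stroke. Your route is a bit more economical (it does not need a separate appeal to Lemma~\ref{doubling}) and, as you note, yields as a byproduct that the $\liminf$ in \eqref{max-vol-1} is actually a limit; the paper's route makes the two inequalities and their provenance more explicit. Either way the content is the same Bishop--Gromov mechanism.
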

\begin{proof}
On the one hand, by the doubling property in Lemma \ref{doubling}(i),
\begin{eqnarray*}
s(x,r)&=&\limsup_{\delta\rightarrow 0}\frac 1\delta \mu\big(B(x,r+\delta)\setminus B(x,r)\big)\\
&\leq&\limsup_{\delta\rightarrow 0}\frac 1\delta \left[\Big(\frac{r+\delta}{r}\Big)^N\mu(B(x,r))-\mu(B(x,r))\right]\\
&=&N\frac{\mu(B(x,r))}{r}.
\end{eqnarray*}
Thus, by the maximum volume growth, we derive that for any $\epsilon>0$, there exists $A>0$ such that for any $r\geq A$, it holds that
$\mu(B(x,r))\leq (1+\epsilon)\kappa_\infty r^N$, and hence
$$s(x,r)\leq (1+\epsilon)\kappa_\infty N r^{N-1}.$$

On the other hand, it follows from Lemma \ref{volume-growth} that the function $(0,\infty)\ni r\mapsto\frac{s(x,r)}{r^{N-1}}$ is non-increasing, which immediately implies that if there exists $r_0\in (0,\infty)$ such that
$s(x,r_0)< N\kappa_\infty r_0^{N-1},$ then, for all $r>r_0$, we have that
$s(x,r)< N\kappa_\infty r^{N-1}.$
Applying Lemma \ref{volume-growth} again, we derive that for each $r>r_0$,
$$\mu(B(x,r)\setminus B(x,r_0))=\int_{r_0}^r s(x,t)\,\d t< \kappa_\infty\big(r^N-r_0^{N}\big),$$
and hence
$$\liminf_{r\rightarrow\infty}\frac{\mu(B(x,r))}{r^N}<\liminf_{r\rightarrow\infty}\frac{\kappa_\infty \left(r^N-r_0^{N}\right)+\mu(B(x,r_0))}{r^N}=\kappa_\infty,$$
which is a contradiction. Thus, for any $r>0$, we get that $s(x,r)\ge N\kappa_\infty r^{N-1}$, which is the last assertion.

Combing the above results, we finally reach the first assertion.
\end{proof}

\begin{definition}[Boundary Integral]
Let $o\in X$ and $r\in [0,\infty)$. Suppose $f\in L^\infty_\loc(X)$. Define the integral of $f$ on $\partial B(o,r)$ as
$$|f|_{\partial B(o,r)}=\limsup_{\delta\rightarrow 0^{+}}\frac 1\delta \int_{B(o,r+\delta)\setminus B(o,r)}f(z)\,\d\mu(z).$$
\end{definition}

The next result is a substitute for the co-area formula in Riemannian manifolds in our more general setting to some extent, which is important for the proof of Theorem \ref{main} below.
\begin{lemma}\label{co-area}
Let $(X,d,\mu)$ be an $\RCD(K,N)$ space with $K\in \R$ and $N\in (1,\infty)$. Let $o\in X$ and $R\in (0,\infty)$.
Then for each $f\in L^\infty_\loc(X)$, it holds that
\begin{eqnarray}\label{co-area-1}
\int_{B(o,R)}f\,\d\mu=\int_0^R |f|_{\partial B(o,r)}\,\d r.
\end{eqnarray}
In addition, if $\phi: \R_+\rightarrow\R_+$ is a locally continuous and monotone function, then
\begin{eqnarray}\label{co-area-2}
\int_{B(o,R)}\phi(d(o,z))f(z)\,\d\mu(z)=\int_0^R \phi(r)|f|_{\partial B(o,r)}\,\d r,
\end{eqnarray}
and
\begin{eqnarray}\label{co-area-3}
\int_{X\setminus B(o,R)}\phi(d(o,z))f(z)\,\d\mu(z)=\int_R^\infty \phi(r)|f|_{\partial B(o,r)}\,\d r.
\end{eqnarray}
\end{lemma}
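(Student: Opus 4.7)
The plan is to interpret each of the three identities as an instance of the fundamental theorem of calculus applied to a locally Lipschitz function of $R$, with the Minkowski-type quantity $|f|_{\partial B(o,r)}$ appearing as the pointwise derivative. The single enabling ingredient is the local Lipschitzness of the volume function $r\mapsto \mu(B(o,r))$ on $(0,\infty)$, already recalled just before the statement of the lemma (and, in turn, following from the doubling estimate in Lemma \ref{doubling}).

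First I would establish \eqref{co-area-1}. Set $F(R):=\int_{B(o,R)}f\,\d\mu$. Since $f\in L^\infty_{\loc}(X)$, for $0<r_1<r_2$ in any fixed compact subinterval of $(0,\infty)$ one has
\[
|F(r_2)-F(r_1)|\leq \|f\|_{L^\infty(B(o,r_2))}\,\bigl|\mu(B(o,r_2))-\mu(B(o,r_1))\bigr|\leq C(r_2-r_1),
\]
so $F$ is locally Lipschitz on $(0,\infty)$, hence absolutely continuous and differentiable at almost every $r$. At such a differentiability point the increment $\frac{1}{\delta}\int_{B(o,r+\delta)\setminus B(o,r)}f\,\d\mu$ admits a genuine limit as $\delta\to 0^{+}$; that limit therefore coincides with its $\limsup$, which by definition is $|f|_{\partial B(o,r)}$. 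Integrating $F'$ over $[0,R]$ yields \eqref{co-area-1}.

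For \eqref{co-area-2}, set $G(R):=\int_{B(o,R)}\phi(d(o,z))f(z)\,\d\mu(z)$. The same bound, now with $\phi\cdot f$ in place of $f$ (and using that $\phi$ is locally bounded), shows that $G$ is locally Lipschitz. At a point $R$ where both $F$ and $G$ are differentiable I would decompose
\[
\frac{G(R+\delta)-G(R)}{\delta}=\phi(R)\,\frac{F(R+\delta)-F(R)}{\delta}+\frac{1}{\delta}\int_{B(o,R+\delta)\setminus B(o,R)}\bigl[\phi(d(o,z))-\phi(R)\bigr]f(z)\,\d\mu(z).
\]
Continuity of $\phi$ gives $|\phi(d(o,z))-\phi(R)|\leq \omega_\phi(\delta)\to 0$ on the annulus, whose measure is $O(\delta)$ by the Lipschitz control on $r\mapsto\mu(B(o,r))$; the remainder therefore vanishes, so $G'(R)=\phi(R)\,|f|_{\partial B(o,R)}$ almost everywhere. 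Another application of the fundamental theorem of calculus produces \eqref{co-area-2}. Finally, \eqref{co-area-3} follows by applying \eqref{co-area-2} to $B(o,T)\setminus B(o,R)$ for $T>R$ (as the difference of the two identities on $B(o,T)$ and $B(o,R)$) and letting $T\to\infty$, with monotone convergence handling the non-negative case and dominated convergence, justified by the monotonicity of $\phi$, handling the signed case.

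The main technical point is the very first step, namely establishing the local Lipschitzness of $F$; once this is in hand, the passage from the $\limsup$ in the definition of $|f|_{\partial B(o,r)}$ to a genuine one-sided derivative is automatic on the full-measure differentiability set of $F$, and everything else reduces to standard one-dimensional calculus.
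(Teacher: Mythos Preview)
Your argument is correct and structurally the same as the paper's: both identify $r\mapsto\int_{B(o,r)}f\,\d\mu$ as locally Lipschitz (via the local Lipschitzness of $r\mapsto\mu(B(o,r))$), recognize $|f|_{\partial B(o,r)}$ as its a.e.\ derivative, and integrate. The only tactical difference is in \eqref{co-area-2}. The paper exploits the \emph{monotonicity} of $\phi$: on the annulus $B(o,r+\delta)\setminus B(o,r)$ one has $\phi(r+\delta)\le\phi(d(o,z))\le\phi(r)$ (or the reverse), so after splitting $f$ by sign one squeezes to get $|(\phi\circ d_o)f|_{\partial B(o,r)}=\phi(r)\,|f|_{\partial B(o,r)}$ and then invokes \eqref{co-area-1} for the product $(\phi\circ d_o)f$. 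You instead use only the \emph{continuity} of $\phi$, bounding the remainder $\frac{1}{\delta}\int[\phi(d(o,z))-\phi(R)]f\,\d\mu$ by $\omega_\phi(\delta)\cdot\|f\|_{L^\infty}\cdot O(1)$. Your route avoids the sign decomposition of $f$ and does not actually need monotonicity for \eqref{co-area-2}; the paper's route makes the identity $|(\phi\circ d_o)f|_{\partial B(o,r)}=\phi(r)\,|f|_{\partial B(o,r)}$ explicit, which is a slightly stronger pointwise statement. Either way the content is the same.
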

\begin{proof}
Notice that, for each fixed $o\in X$, the function $r\mapsto\mu(B(o,r))$ is locally Lipschitz continuous on $(0,\infty)$.
From this,  we conclude that, for each $f\in L^\infty_\loc(X)$,
the function
$$r\mapsto\int_{B(o,r)}f\,d\mu$$
is locally Lipschitz continuous on $(0,\infty)$, and hence, the required equality \eqref{co-area-1} holds.

Without loss of generality, assume that the function $\phi: \R_+\rightarrow\R_+$ is locally continuous and monotonically decreasing. Let $r>0$. For any $\delta>0$, note that for any $z\in B(o,r+\delta)\setminus B(o,r)$, $r\leq d(o,z)<r+\delta$. Then, if $f$ is nonnegative, then
\begin{eqnarray*}
\int_{B(o,r+\delta)\setminus B(o,r)}\phi(r+\delta) f(z)\,\d\mu(z)&\leq&\int_{B(o,r+\delta)\setminus B(o,r)}\phi(d(o,z)) f(z)\,\d\mu(z)\\
&\leq& \int_{B(o,r+\delta)\setminus B(o,r)}\phi(r) f(z)\,\d\mu(z),
\end{eqnarray*}
and, if $f$ is non-positive, then
\begin{eqnarray*}
\int_{B(o,r+\delta)\setminus B(o,r)}\phi(r+\delta) f(z)\,\d\mu(z)&\geq&\int_{B(o,r+\delta)\setminus B(o,r)}\phi(d(o,z)) f(z)\,\d\mu(z)\\
&\geq& \int_{B(o,r+\delta)\setminus B(o,r)}\phi(r) f(z)\,\d\mu(z);
\end{eqnarray*}
hence, by definition,
$$|(\phi\circ d_o) f|_{\partial B(o,r)}=\phi(r)|f|_{\partial B(o,r)}.$$
Thus,
\begin{eqnarray*}
\int_{B(o,R)}\phi(d(o,z))f(z)\,d\mu(z)=\int_0^R |(\phi\circ d_o) f|_{\partial B(o,r)}\,dr=\int_0^R \phi(r) | f|_{\partial B(o,r)}\,dr,
\end{eqnarray*}
which completes the proof of \eqref{co-area-2}. The proof of \eqref{co-area-3} is similar.
\end{proof}

Now we present the main result in the next theorem. Fix a point $x\in X$. For $N\in \mathbb{N}$, let $\omega_N$ be the volume of the unit ball in $\R^N$. For any $r>0$, set
\begin{equation}\label{kappa-x}
\kappa_x(r)=\frac{\mu(B(x,r))}{r^N}.
\end{equation}
Then from Lemma \ref{doubling}, it is immediate to know that, in the $\RCD(0,N)$ space with $N\in (1,\infty)$, the function $r\mapsto\kappa_x(r)$ is locally Lipschitz continuous in $(0,\infty)$, and monotonically decreases to $\kappa_\infty$ as $r$ increases to $\infty$.

Now we present the main result in this section in the next theorem.
\begin{theorem}\label{main}
Let $(X,d,\mu)$ be an $\RCD(0,N)$ space with $N\in \mathbb{N}\setminus\{1\}$ having the maximum volume growth \eqref{max-vol-1}.
Then, for any $\epsilon>0$ and all $y\in X$, it holds that
\begin{eqnarray}\label{UE}
p_t(x,y)\leq (1+C_N(\epsilon+\beta))\frac{\omega_N}{\kappa_\infty}(4\pi t)^{-\frac{N}{2}}\exp\left\{-\frac{1-\epsilon}{4t}d(x,y)^2 \right\},
\end{eqnarray}
and
\begin{eqnarray}\label{LE}
p_t(x,y)\geq\frac{\omega_N}{\kappa_x(\epsilon d(x,y))}(4\pi t)^{-\frac{N}{2}}\exp\left\{ -\frac{1+\epsilon(\epsilon+2)^2}{4t}d(x,y)^2\right\},
\end{eqnarray}
where $C_N$ is a positive constant depending only on $N$, $\kappa_x(\cdot)$ is defined in \eqref{kappa-x}, and $\beta$ is given by
\begin{eqnarray}\label{beta}
\beta=\epsilon^{-2N}\max_{r\geq (1-\epsilon)d(x,y)}\left[1-\frac{\kappa_\infty}{\kappa_x(\epsilon^{2N+1}r)}\right].
\end{eqnarray}
\end{theorem}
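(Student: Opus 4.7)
The plan is to combine three tools from the preliminaries: the heat-kernel comparison with the Euclidean model $\mathbb{M}^{0,N}=\R^{N}$ (Lemma \ref{kernelcomparison}), the parabolic Harnack inequality (Lemma \ref{para-harnack}), and the sharp boundary-area estimate $s(x,r)\ge N\kappa_{\infty}r^{N-1}$ from Lemma \ref{lowerboundary} together with the co-area formula (Lemma \ref{co-area}). Throughout write $d:=d(x,y)$.

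For the lower bound \eqref{LE}, I would apply the integrated form of Lemma \ref{kernelcomparison} on $B(x,\epsilon d)$ and send the Dirichlet-ball radius in the model to infinity so that the model kernel rises to the full Euclidean Gaussian. Since $|\bar z-\bar y|\le(1+\epsilon)d$ on $\bar B(\bar x,\epsilon d)$, this gives
\[
\int_{B(x,\epsilon d)}p_{t'}(z,y)\,d\mu(z)\ge \omega_{N}(\epsilon d)^{N}(4\pi t')^{-N/2}\exp\!\Bigl(-\tfrac{(1+\epsilon)^{2}d^{2}}{4t'}\Bigr).
\]
On the other hand, Harnack applied with the second argument fixed gives $p_{t'}(z,y)\le p_{t}(x,y)\exp(d(z,x)^{2}/(4(t-t')))(t/t')^{N/2}$, bounded on $B(x,\epsilon d)$ by $p_{t}(x,y)\exp(\epsilon^{2}d^{2}/(4(t-t')))(t/t')^{N/2}$. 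Integrating, dividing by $\mu(B(x,\epsilon d))=\kappa_{x}(\epsilon d)(\epsilon d)^{N}$, and using $(t'/t)^{N/2}(4\pi t')^{-N/2}=(4\pi t)^{-N/2}$ yields a lower bound on $p_{t}(x,y)$ with exponent $-d^{2}\bigl[\epsilon^{2}/(4(t-t'))+(1+\epsilon)^{2}/(4t')\bigr]$; optimising over $t'\in(0,t)$ (the minimiser $t'=t(1+\epsilon)/(1+2\epsilon)$ produces coefficient $(1+2\epsilon)^{2}\le 1+\epsilon(\epsilon+2)^{2}$) delivers \eqref{LE}.

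For the upper bound \eqref{UE}, I would reverse the Harnack direction, $p_{t}(x,y)\le p_{t+\tau}(x,z)\exp(d(y,z)^{2}/(4\tau))((t+\tau)/t)^{N/2}$, and average over $z\in B(y,\epsilon d)$. The crux is then to upper bound $\int_{B(y,\epsilon d)}p_{t+\tau}(x,z)\,d\mu(z)$. Using stochastic completeness \eqref{stocom}, rewrite this integral as $1-\int_{X\setminus B(y,\epsilon d)}p_{t+\tau}(x,z)\,d\mu(z)$, and lower bound the complement integral by inserting the pointwise Euclidean lower bound on $p_{t+\tau}$ from Lemma \ref{kernelcomparison} and evaluating the resulting Gaussian integral via the co-area identity
\[
\int_{X}e^{-d(x,z)^{2}/(4s)}\,d\mu(z)=\int_{0}^{\infty}e^{-r^{2}/(4s)}s(x,r)\,dr
\]
(Lemma \ref{co-area}) with $s(x,r)\ge N\kappa_{\infty}r^{N-1}$ (Lemma \ref{lowerboundary}); a direct Gamma-function computation produces exactly $(\kappa_{\infty}/\omega_{N})(4\pi s)^{N/2}$, which is precisely the normalisation that generates the leading prefactor $\omega_{N}/\kappa_{\infty}$ in \eqref{UE}. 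Isolating the $B(y,\epsilon d)$-piece of the Gaussian integral via $d(x,z)\ge(1-\epsilon)d$ there, absorbing the finite-scale deviations $1-\kappa_{\infty}/\kappa_{x}(\cdot)$ into $\beta$, and finally optimising in $\tau$ yields the Gaussian exponent $-(1-\epsilon)d^{2}/(4t)$ with multiplicative error $1+C_{N}(\epsilon+\beta)$.

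The main obstacle is producing the exact constant $\omega_{N}/\kappa_{\infty}$ in \eqref{UE}: only the sharp boundary-area inequality of Lemma \ref{lowerboundary} (not the weaker Bishop-Gromov volume bound $\mu(B(x,r))\ge\kappa_{\infty}r^{N}$) is strong enough to match the leading constant appearing in the lower bound, and one must carefully track how the finite-scale deviation $1-\kappa_{\infty}/\kappa_{x}(r)$ propagates through the Harnack averaging and the various $\epsilon$-dependent radii that arise. The power $\epsilon^{2N+1}$ in the definition \eqref{beta} of $\beta$ records how deep into the scale hierarchy one must look before Bishop-Gromov becomes essentially saturated, and an auxiliary difficulty is that the naive stochastic-completeness subtraction leaves a $d$-independent residue $1-\kappa_{\infty}/\omega_{N}$ which has to be absorbed using the refined lower bound \eqref{LE} (rather than the crude Euclidean one) when bounding the complement integral.
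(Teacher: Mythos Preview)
Your lower-bound argument is essentially the paper's: integrated heat-kernel comparison on $B(x,\epsilon d)$, Harnack to pass from the ball average to the pointwise value, and division by $\mu(B(x,\epsilon d))=\kappa_x(\epsilon d)(\epsilon d)^N$. The parameter bookkeeping differs slightly but the mechanism is identical.

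The upper-bound sketch, however, has a genuine gap, and the fix you propose at the end does not close it. Your decomposition is
\[
\int_{B(y,\epsilon d)}p_{t+\tau}(x,z)\,\d\mu(z)=1-\int_{X\setminus B(y,\epsilon d)}p_{t+\tau}(x,z)\,\d\mu(z),
\]
and you then lower bound the complement integral by a pointwise lower bound on $p$ (first the crude Euclidean one, then \eqref{LE}). The trouble is that $X\setminus B(y,\epsilon d)$ contains all of $B(x,(1-\epsilon)d)$, so after co-area (centered at $x$) you are forced to integrate the error term $1-\kappa_\infty/\kappa_x(\delta r)$ over \emph{all} radii $r\ge0$, including small ones where this quantity is bounded away from zero. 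The resulting residue carries no Gaussian decay in $d^2/t$; once you divide by $\mu(B(y,\epsilon d))\ge\kappa_\infty(\epsilon d)^N$ and multiply by the Harnack factor $e^{\epsilon d^2/(4t)}$, the contribution blows up like $(\epsilon d)^{-N}e^{d^2/(4t)}$ relative to the target bound whenever $d\gg\sqrt t$. In particular it cannot be absorbed into $\beta$ as defined in \eqref{beta}, since $\beta$ only sees $\kappa_x$ at scales $\ge(1-\epsilon)d$. Relatedly, your main term $\int_{B(y,\epsilon d)}\bar p$ divided by $\mu(B(y,\epsilon d))$ gives the bare Gaussian $(4\pi s)^{-N/2}e^{-(1-\epsilon)^2d^2/(4s)}$ with no $\omega_N/\kappa_\infty$ prefactor; the normalisation you compute, $(\kappa_\infty/\omega_N)(4\pi s)^{N/2}$, does not propagate to the main term the way you suggest.

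The paper's decomposition avoids both problems by centering on $x$, not $y$. Since $B(y,\epsilon R)\subset X\setminus B(x,R)$ with $R=(1-\epsilon)d$, one writes
\[
\int_{B(y,\epsilon R)}p=\int_{X\setminus B(x,R)}p-\int_{X\setminus(B(x,R)\cup B(y,\epsilon R))}p,
\]
bounds the first term by $\int_{\R^N\setminus B(\bar x,R)}\bar p$ using the \emph{integrated} comparison of Lemma~\ref{kernelcomparison} (combined with stochastic completeness), and bounds the second from below by inserting \eqref{LE}. Now every integral runs over $\{r\ge R\}$, so (i) all error terms carry a factor $e^{-R^2/(4(1+\epsilon)t)}$, which survives the Harnack averaging, and (ii) the $\kappa$-deviations only enter at scales $r\ge R$, producing precisely the $\beta$ of \eqref{beta}. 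The $\omega_N/\kappa_\infty$ prefactor arises from the third piece $\int_{B(y,\epsilon R)}[\text{LE}]$, since on that ball $\kappa_x(\delta d(x,z))$ is close to $\kappa_\infty$. A separate short argument using \eqref{main-1} covers the near-diagonal regime $d\le\epsilon\sqrt t$.
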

\begin{remark}
We should mention that the idea of proof of the theorem is from \cite{LiTamWang}. Note that, due to our calculation, on the one hand, the constant $C_N$ in \eqref{UE} above depends only on $N$, while the constant in the expression of the heat kernel upper estimate in \cite[Theorem 2.1]{LiTamWang} depends not only on $N$ but also on $\kappa_\infty$, and on the other hand, from Lemma 3.6, we have
\begin{eqnarray*}
\kappa_x(r)=\frac{1}{r^N}\int_0^r s(x,t)\,\d t\geq\frac{1}{r^N}\int_0^r s(x,r)\Big(\frac{t}{r}\Big)^{N-1}\,\d t=\frac{s(x,r)}{Nr^{N-1}},
\end{eqnarray*}
or $s(x,r)\leq Nr^{N-1}\kappa_x(r)$, but what we need in the proof of Theorem \ref{main} is a lower bound on $s(x,r)$ (see Lemma \ref{lowerboundary} above). For the latter, the cost is that $\kappa_\infty$ goes into the definition of $\beta$ (see \eqref{beta} above). So we should present the detailed proof here.
\end{remark}
\begin{proof}[Proof of Theorem \ref{main}]
(1) We first prove the lower bound of the heat kernel. Let $y\in X$. Applying the parabolic Harnack inequality in Lemma \ref{para-harnack}, we deduce that for any $\delta>0$ and any $z\in X$,
\begin{eqnarray}\label{harnack}
p_t(z,y)\leq (1+\delta)^{\frac{N}{2}}\exp\left\{\frac{d(z,x)^2}{4\delta t}\right\}p_{(1+\delta)t}(x,y).
\end{eqnarray}
Combining with the heat kernel comparison result in Lemma \ref{kernelcomparison}, we have for any $R>0$,
\begin{eqnarray*}
&&\frac{1}{\mu(B(x,R))}\int_{B(\bar{x},R)}\bar{p}_{(1+\delta)t}(\bar{z},\bar{y})\,\d\bar{z}\leq\frac{1}{\mu(B(x,R))}
\int_{B(x,R)}p_{(1+\delta)t}(z,y)\,\d\mu(z)\\
&\leq&\frac{1}{\mu(B(x,R))}\int_{B(x,R)}p_{(1+\delta)^2t}(x,y)(1+\delta)^{\frac{N}{2}}\exp\left\{\frac{d(z,x)^2}{4\delta(1+\delta)t}\right\}\,\d \mu(z)\\
&\leq&p_{(1+\delta)^2t}(x,y)(1+\delta)^{\frac{N}{2}}\exp\left\{ \frac{R^2}{4\delta(1+\delta)t}\right\},
\end{eqnarray*}
which is
\begin{eqnarray*}\label{comparison}
p_{(1+\delta)^2t}(x,y)&\geq&(1+\delta)^{-\frac{N}{2}}\mu(B(x,R))^{-1}\exp\left\{-\frac{R^2}{4\delta(1+\delta)t}\right\}\int_{B(\bar{x},R)}
\bar{p}_{(1+\delta)t}(\bar{z},\bar{y})\,\d \bar{z}\\
&\geq& \omega_N R^N\mu(B(x,R))^{-1}(1+\delta)^{-N}\exp\left\{-\frac{(\delta+2)R^2}{4\delta(1+\delta)t}\right\}\bar{p}_t(\bar{x},\bar{y}),
\end{eqnarray*}
where $d(z,y)=d(\bar{z},\bar{y})$ and we used the parabolic Harnack inequality in the last line. Since $\mu(B(x,R))=\kappa_x(R)R^N$, setting $R=\delta d(x,y)$, we have
$$p_{(1+\delta)^2t}(x,y)\geq\frac{\omega_N}{\kappa_x(\delta d(x,y))}(1+\delta)^{-N}\exp\left\{-\frac{\delta(\delta+2)d(x,y)^2}{4(1+\delta)t}\right\}\bar{p}_t(\bar{x},\bar{y}).$$
Letting $s=(1+\delta)^2t$, we obtain that for any $\delta>0$,
\begin{eqnarray}\label{LowerE}
p_s(x,y)&\geq& \frac{\omega_N}{\kappa_x(\delta d(x,y))}(4\pi s)^{-\frac{N}{2}}\exp\left\{-\frac{1+\delta(\delta+2)^2}{4s}d(x,y)^2\right\}.
%&\geq&\frac{\omega_N}{\kappa_p(\delta d(x,y))}(4\pi s)^{-\frac{N}{2}}\exp\left\{-\frac{9\delta+1}{4s}d(x,y)^2\right\}, where the last inequality holds, provided $\delta\in (0,1]$}
\end{eqnarray}
which is the desired lower bound \eqref{LE}.

(2) Now we turn to prove the upper bound. Note that, by the parabolic Harnack inequality in Lemma \ref{para-harnack}, we have that
$$s^{\frac{N}{2}}p_s(y,x)\leq t^{\frac{N}{2}}p_t(z,x)\exp\left\{\frac{d(z,y)^2}{4(t-s)}\right\},$$
for all $0<s<t$. Letting $z=y=x$, it is immediately to observe that the function $t\mapsto t^{\frac{N}{2}}p_t(x,x)$ is monotone nondecreasing such that (see \cite[Theorem 4.1]{JLZ2014})
\begin{eqnarray}\label{main-1}
\lim_{t\rightarrow\infty}t^{\frac{N}{2}}p_t(x,x)=\frac{\omega_N}{\kappa_\infty}(4\pi)^{-\frac{N}{2}}.
\end{eqnarray}
Let $\epsilon$ be sufficiently small constant in $(0,1)$. Suppose $d(x,y)\leq \epsilon\sqrt{t}$. Then, by \eqref{harnack} and \eqref{main-1}, we have that
\begin{eqnarray}\label{near}
p_t(x,y)&\leq& t^{-\frac{N}{2}}[(1+\epsilon)t]^{\frac{N}{2}}p_{(1+\epsilon)t}(x,x)\exp\left\{\frac{d(x,y)^2}{4\epsilon t}\right\}\cr
&\leq& (4\pi t)^{-\frac{N}{2}}\frac{\omega_N}{\kappa_\infty}\exp\left\{\frac{\epsilon}{4}\right\}\cr
&\leq& (4\pi t)^{-\frac{N}{2}}\frac{\omega_N}{\kappa_\infty}\exp\left\{\frac{\epsilon}{4}\right\}\exp\left\{-\frac{d(x,y)^2}{4t}+\frac{\epsilon^2}{4}\right\}\cr
&\leq& (1+C_1\epsilon)(4\pi t)^{-\frac{N}{2}}\frac{\omega_N}{\kappa_\infty}\exp\left\{-\frac{d(x,y)^2}{4t}\right\},
\end{eqnarray}
for some constant $C_1>0$.

Suppose $d(x,y)>\epsilon\sqrt{t}$ now. Set $R=(1-\epsilon)d(x,y)$. With the fact that (see \eqref{stocom})
$$\int_X p_t(x,y)\,\d\mu(y)=\int_{\R^N}\bar{p}_t(\bar{x},\bar{y})\,\d\bar{y}=1,\quad\mbox{for every }t>0,$$
we apply \eqref{comparison} to deduce that
$$\int_{X\setminus B(x,R)}p_{(1+\epsilon)t}(x,y)\,\d\mu(y)\leq \int_{\R^N\setminus B(\bar{x},R)}\bar{p}_{(1+\epsilon)t}(\bar{x},\bar{y})\,\d\bar{y}.$$
Since $B(x,R)\cap B(y,\epsilon R)=\emptyset$, we have
$$B(y,\epsilon R)=\big(X\setminus B(x,R)\big)\setminus\left[X\setminus \big(B(x,R)\cup B(y,\epsilon R)\big)\right],$$
and hence,
\begin{eqnarray*}
&&\int_{B(y,\epsilon R)} p_{(1+\epsilon)t}(x,z)\,\d\mu(z)\\
&=&\int_{X\setminus B(x,R)}p_{(1+\epsilon)t}(x,z)\,\d\mu(z)-\int_{X\setminus \big(B(x,R)\cup B(y,\epsilon R)\big)}p_{(1+\epsilon)t}(x,z)\,\d\mu(z)\\
&\leq& \int_{\R^N\setminus B(\bar{x},R)}\bar{p}_{(1+\epsilon)t}(\bar{x},\bar{z})\,\d\bar{z}- \int_{X\setminus \big(B(x,R)\cup B(y,\epsilon R)\big)} p_{(1+\epsilon)t}(x,z)\,\d\mu(z)\\
&\leq&\int_{\R^N\setminus B(\bar{x},R)}\bar{p}_{(1+\epsilon)t}(\bar{x},\bar{z})\,\d\bar{z}+\int_{B(y,\epsilon R)} p_{(1+\epsilon)t}(x,z)\,\d\mu(z)\\
&&-\int_{X\setminus B(x,R)} p_{(1+\epsilon)t}(x,z)\,\d\mu(z).
\end{eqnarray*}
Since $\kappa_x(r)$ decreases monotonically to $\kappa_\infty$ as $r$ increases to $\infty$, by the lower bound \eqref{LowerE},  \eqref{co-area-2} and \eqref{co-area-3}, we derive that
\begin{eqnarray*}
&&\int_{B(y,\epsilon R)}p_{(1+\epsilon)t}(x,z)\,\d\mu(z)\\
&\leq& \int_{\R^N\setminus B(\bar{x},R)}(4\pi(1+\epsilon)t)^{-\frac{N}{2}}\exp\left\{-\frac{|\bar{x}-\bar{z}|^2}{4(1+\epsilon)t}\right\}\,\d\bar{z}\\
&&- (4\pi(1+\epsilon))^{-\frac{N}{2}}\int_{X\setminus B(x,R)} \frac{\omega_N}{\kappa_x(\epsilon d(x,z))}\exp\left\{-\frac{(1+\delta(\delta+2)^2)d(x,z)^2}{4(1+\epsilon)t}\right\}\,\d\mu(z)\\
&&+ (4\pi(1+\epsilon))^{-\frac{N}{2}}\int_{B(y,\epsilon R)} \frac{\omega_N}{\kappa_x(\epsilon d(x,z))}\exp\left\{-\frac{(1+\delta(\delta+2)^2)d(x,z)^2}{4(1+\epsilon)t}\right\}\,\d\mu(z)\\
&\leq&  N\omega_N (4\pi(1+\epsilon))^{-\frac{N}{2}}\int_R^\infty  r^{N-1}\exp\left\{-\frac{r^2}{4(1+\epsilon)t}\right\}\,\d r\\
&&- N\omega_N(4\pi(1+\epsilon))^{-\frac{N}{2}}\int_R^\infty \frac{s(x,r)}{N\kappa_x(\epsilon r)}\exp\left\{-\frac{(1+\delta(\delta+2)^2)r^2}{4(1+\epsilon)t}\right\}\,\d r\\
&&+ (4\pi(1+\epsilon))^{-\frac{N}{2}}\frac{\omega_N}{\kappa_\infty}\mu(B(y,\epsilon R))\exp\left\{-\frac{(1+\delta(\delta+2)^2)}{4(1+\epsilon)t}\big(d(x,y)-\epsilon R\big)^2\right\},
\end{eqnarray*}
which is bounded above by
\begin{eqnarray*}
&&N\omega_N (4\pi(1+\epsilon))^{-\frac{N}{2}}\int_R^\infty  r^{N-1}\left[\exp\left\{-\frac{r^2}{4(1+\epsilon)t}\right\}- \exp\left\{-\frac{(1+\delta(\delta+2)^2)r^2}{4(1+\epsilon)t}\right\}\right]\,\d r\\
&&+ N\omega_N (4\pi(1+\epsilon))^{-\frac{N}{2}}\int_R^\infty  r^{N-1}\exp\left\{-\frac{(1+\delta(\delta+2)^2)r^2}{4(1+\epsilon)t}\right\}\left[1-\frac{\kappa_\infty}{\kappa_x(\epsilon r)}\right]\,\d r\\
&&+ (4\pi(1+\epsilon))^{-\frac{N}{2}}\frac{\omega_N}{\kappa_\infty}\mu(B(y,\epsilon R))\exp\left\{-\frac{(1+\delta(\delta+2)^2)}{4(1+\epsilon)t}\big(d(x,y)-\epsilon R\big)^2\right\},
\end{eqnarray*}
since $s(x,r)\geq Nr^{N-1}\kappa_\infty$ by Lemma \ref{lowerboundary}. Thus,
\begin{eqnarray}\label{eq-1}
&&\int_{B(y,\epsilon R)}p_{(1+\epsilon)t}(x,z)\,\d\mu(z)\cr
&\leq&N\omega_N (4\pi(1+\epsilon))^{-\frac{N}{2}}\int_R^\infty  r^{N-1}\exp\left\{-\frac{r^2}{4(1+\epsilon)t}\right\}\frac{\delta(\delta+2)^2 r^2}{4(1+\epsilon) t}\,\d r\cr
&& + N\omega_N (4\pi(1+\epsilon))^{-\frac{N}{2}}\max_{r\geq R}\left[1-\frac{\kappa_\infty}{\kappa_x(\delta r)}\right]\int_R^\infty  r^{N-1}
\exp\left\{-\frac{(1+\delta(\delta+2)^2)r^2}{4(1+\epsilon)t}\right\}\,\d r\cr
&& + (4\pi(1+\epsilon))^{-\frac{N}{2}}\frac{\omega_N}{\kappa_\infty}\mu(B(y,\epsilon R))\exp\left\{-\frac{(1+\delta(\delta+2)^2)}{4(1+\epsilon)t}\big(d(x,y)-\epsilon R\big)^2\right\}.
\end{eqnarray}

By the elementary identity
$$\int_0^\infty r^a e^{-\frac{r^2}{b}}2r\,\d r=\Gamma\Big(\frac{a}{2}+1\Big)b^{\frac{a}{2}+1},\quad\mbox{for any }a\geq0,\,\ b>0,$$
concerning the first term in the right hand side of \eqref{eq-1}, we have that
\begin{eqnarray}\label{eq-2}
&&(4\pi(1+\epsilon))^{-\frac{N}{2}}\int_R^\infty  r^{N-1}\exp\left\{-\frac{r^2}{4(1+\epsilon)t}\right\}\frac{ r^2}{4(1+\epsilon) t}\,\d r\cr
&\leq&C_2 \big(1+R^N(4\pi(1+\epsilon)t)^{-\frac{N}{2}}\big)\exp\left\{-\frac{R^2}{4(1+\epsilon)t}\right\},
\end{eqnarray}
for some positive constant $C_2$ depending only on $N$. Similarly, by the integration by parts, there exist positive constants $C_3$ and $C_4$ depending on $N$ such that
\begin{eqnarray}\label{eq-3}
&&(4\pi(1+\epsilon))^{-\frac{N}{2}}\int_R^\infty  r^{N-1}\exp\left\{-\frac{(1+\delta(\delta+2)^2)r^2}{4(1+\epsilon)t}\right\}\,\d r\cr
&=&(4\pi(1+\epsilon))^{-\frac{N}{2}}\frac{1}{N}\left[r^{N}\exp\left\{-\frac{(1+\delta(\delta+2)^2)r^2}{4(1+\epsilon)t}\right\}\right]_R^\infty+\cr
&&(4\pi(1+\epsilon))^{-\frac{N}{2}}\frac{1}{N}\int_R^\infty  r^{N}\exp\left\{-\frac{(1+\delta(\delta+2)^2)r^2}{4(1+\epsilon)t}\right\}\frac{ 2(1+\delta(\delta+2)^2)r}{4(1+\epsilon) t}\,\d r\cr
&\leq&C_3 (4\pi(1+\epsilon))^{-\frac{N}{2}}\int_R^\infty  r^{N-1}\exp\left\{-\frac{(1+\delta(\delta+2)^2)r^2}{4(1+\epsilon)t}\right\}\frac{ 2(1+\delta(\delta+2)^2)r}{4(1+\epsilon) t}\,\d r\cr
&\leq&C_4 \big(1+R^N(4\pi(1+\epsilon)t)^{-\frac{N}{2}}\big)\exp\left\{-\frac{R^2}{4(1+\epsilon)t}\right\}.
\end{eqnarray}

Substituting \eqref{eq-2} and \eqref{eq-3} into \eqref{eq-1}, we obtain that
\begin{eqnarray}\label{eq-4}
&&\int_{B(y,\epsilon R)}p_{(1+\epsilon)t}(x,z)\,\d\mu(z)\cr
&\leq&C_5 \omega_N \big(1+R^N(4\pi(1+\epsilon)t)^{-\frac{N}{2}}\big)(\delta+\alpha(\delta,R))\exp\left\{-\frac{R^2}{4(1+\epsilon)t}\right\}+\cr
&&  (4\pi(1+\epsilon))^{-\frac{N}{2}}\frac{\omega_N}{\kappa_\infty}\mu(B(y,\epsilon R))\exp\left\{-\frac{(1+\delta(\delta+2)^2)}{4(1+\epsilon)t}\big(d(x,y)-\epsilon R\big)^2\right\},
\end{eqnarray}
where $C_5$ is some positive constant depending on $N$ and
$$\alpha(\delta,R)=\sup_{r\geq R}\left[1-\frac{\kappa_\infty}{\kappa_x(\delta r)}\right].$$

Since $\kappa_x(r)$ monotonically decreases to $\kappa_\infty$ as $r$ increases to $\infty$, by \eqref{harnack} and \eqref{eq-4}, we have
\begin{eqnarray*}
&&p_t(x,y)\cr
&\leq&(1+\epsilon)^{\frac{N}{2}}\exp\left\{\frac{\epsilon^2R^2}{4\epsilon t}\right\}\mu(B(y,\epsilon R))^{-1}\int_{B(y,\epsilon R)}p_{(1+\epsilon)t}(x,z)\,\d\mu(z)\cr
&\leq&C_5\frac{\omega_N(1+\epsilon)^{\frac{N}{2}}}{\mu(B(y,\epsilon R))}\big(1+R^N(4\pi(1+\epsilon)t)^{-\frac{N}{2}}\big)(\delta+\alpha(\delta,R))\exp\left\{\frac{\epsilon R^2}{4t}-\frac{R^2}{4(1+\epsilon)t}\right\}\cr
&&+  (4\pi(1+\epsilon))^{-\frac{N}{2}}\frac{\omega_N}{\kappa_\infty}\exp\left\{\frac{\epsilon R^2}{4t}-\frac{(1+\delta(\delta+2)^2)}{4(1+\epsilon)t}\big(d(x,y)-\epsilon R\big)^2\right\},
\end{eqnarray*}
Note that $\mu(B(y,\epsilon R))\geq \kappa_\infty (\epsilon R)^N$ and $R=(1-\epsilon)d(x,y)\geq \epsilon(1-\epsilon)\sqrt{t}$. Thus,
\begin{eqnarray}\label{eq-5}
&& p_t(x,y)\cr
&\leq&C_5\frac{\omega_N}{\kappa_\infty}\left[\frac{(1+\epsilon)^{\frac{N}{2}}}{ t^{N/2} \epsilon^{2N}(1-\epsilon)^{N}}+\frac{(4\pi t)^{-N/2}}{ \epsilon^N}\right](\delta+\alpha(\delta,R))\times \cr
&&\exp\left\{-\frac{(1-\epsilon-\epsilon^2)(1-\epsilon)^2}{4(1+\epsilon)t}d(x,y)^2\right\} + \cr
&&  (4\pi t)^{-\frac{N}{2}}\frac{\omega_N}{\kappa_\infty}
\exp\left\{\left[\frac{\epsilon(1-\epsilon)^2}{4t}-\frac{(1+\delta(\delta+2)^2)(1-\epsilon+\epsilon^2)^2}{4(1+\epsilon)t}\right]d(x,y)^2\right\}.
\end{eqnarray}
Choosing $\delta=\epsilon^{2N+1}$ and letting
$$\beta=\epsilon^{-2N}\alpha(\delta,(1-\epsilon)d(x,y)),$$
we deduce from \eqref{eq-5} that
\begin{eqnarray}\label{eq-6}
 p_t(x,y)\leq (1+C_6(\epsilon+\beta))\frac{\omega_N}{\kappa_\infty}(4\pi t)^{-N/2}\exp\left\{-\frac{1-\epsilon}{4t}d(x,y)^2\right\},
\end{eqnarray}
for some positive constant $C_6$ depending only on $N$.

Finally, combining \eqref{near} and \eqref{eq-6}, we finish the proof of the upper bound \eqref{UE}.
\end{proof}

\begin{remark}
An immediate observation is that
$$\lim_{d(x,y)\rightarrow\infty}\beta=0,$$
where $\beta$ is defined by \eqref{beta}.
\end{remark}

Immediate applications of Theorem \ref{main} are presented in the following two lemmas. The proofs are sketched since they follow the ones in the Riemannian setting; see \cite[Section 2]{LiTamWang}. The first one is on the sharp bounds for the (minimal) Green function, defined by
$$G(x,y)=\int_0^\infty p_t(x,y)\,\d t,$$
for any $x,y\in X$.
\begin{corollary}\label{greenbounds}
Let $(X,d,\mu)$ be an $\RCD(0,N)$ space with $N\in \mathbb{N}\setminus\{1,2\}$ having the maximum volume growth \eqref{max-vol-1}. Then, for any $\epsilon>0$, the Green function satisfies the estimate
\begin{eqnarray*}\label{green-bound}
(1+\epsilon(\epsilon+2)^2)^{1-\frac{N}{2}}\frac{d(x,y)^{2-N}}{N(N-2)\kappa_x(\epsilon d(x,y))}&\leq& G(x,y)\\
&\leq& (1+C_N(\epsilon+\beta))(1-\epsilon)^{1-\frac{N}{2}}\frac{d(x,y)^{2-N}}{N(N-2)\kappa_\infty},
\end{eqnarray*}
for any $y\in X$; in particular,
$$\lim_{d(x,y)\rightarrow\infty}\frac{N(N-2)G(x,y)}{d(x,y)^{N-2}}=\frac{1}{\kappa_\infty}.$$
\end{corollary}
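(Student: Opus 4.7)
The plan is to obtain the Green function estimates by integrating the sharp heat kernel bounds of Theorem \ref{main} in time, following the Riemannian argument of \cite{LiTamWang}. Writing $G(x,y)=\int_0^\infty p_t(x,y)\,\d t$, note first that the hypothesis $N\geq 3$ is exactly what is needed to ensure integrability: near $t=0$ the exponential factor in the Gaussian bound kills any polynomial blow-up, while for $t\to\infty$ the pointwise bound $p_t(x,y)\le C\,t^{-N/2}$ from \eqref{UE} is integrable precisely when $N/2>1$.

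For the upper estimate, I will plug \eqref{UE} into the definition of $G$, so that
\begin{equation*}
G(x,y)\le (1+C_N(\epsilon+\beta))\frac{\omega_N}{\kappa_\infty}\int_0^\infty(4\pi t)^{-N/2}\exp\left\{-\frac{(1-\epsilon)d(x,y)^2}{4t}\right\}\d t.
\end{equation*}
The substitution $u=\frac{(1-\epsilon)d(x,y)^2}{4t}$ turns this into a Gamma integral, giving
\begin{equation*}
\int_0^\infty(4\pi t)^{-N/2}e^{-a/t}\,\d t=\frac{\Gamma(N/2-1)}{4\pi^{N/2}}\,a^{1-N/2},\qquad a=\frac{(1-\epsilon)d(x,y)^2}{4}.
\end{equation*}
For the lower estimate, an identical computation starting from \eqref{LE} produces the same shape with $a$ replaced by $\frac{(1+\epsilon(\epsilon+2)^2)d(x,y)^2}{4}$ and with $\kappa_\infty$ replaced by $\kappa_x(\epsilon d(x,y))$.

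The only real work left is collapsing the prefactor. Using $\omega_N=\pi^{N/2}/\Gamma(N/2+1)$ together with $\Gamma(N/2-1)=\frac{2}{N-2}\Gamma(N/2)$ and $\Gamma(N/2+1)=\frac{N}{2}\Gamma(N/2)$, one sees that
\begin{equation*}
\omega_N\cdot\frac{\Gamma(N/2-1)}{4\pi^{N/2}}=\frac{1}{N(N-2)},
\end{equation*}
which is precisely the universal constant appearing in both claimed bounds. Combining this with the two Gaussian integrals above yields the stated inequalities verbatim.

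Finally, for the asymptotic, I will first let $d(x,y)\to\infty$ with $\epsilon>0$ fixed. By the Remark following Theorem \ref{main}, $\beta\to 0$; by the maximum volume growth assumption, $\kappa_x(\epsilon d(x,y))\to\kappa_\infty$ since $\epsilon d(x,y)\to\infty$ and $\kappa_x(r)\downarrow\kappa_\infty$. This sandwiches $\liminf$ and $\limsup$ of $N(N-2)G(x,y)d(x,y)^{N-2}$ between $(1+\epsilon(\epsilon+2)^2)^{1-N/2}/\kappa_\infty$ and $(1-\epsilon)^{1-N/2}/\kappa_\infty$; letting $\epsilon\downarrow 0$ closes the gap and yields the limit $1/\kappa_\infty$. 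Since every step reduces to computation of a standard Gaussian--Gamma integral, the only subtle point is ensuring that the two limits $d(x,y)\to\infty$ and $\epsilon\to 0$ are taken in the right order, as the upper bound's prefactor $1+C_N(\epsilon+\beta)$ blows up badly in $\beta$ but $\beta$ is controlled only once $d(x,y)$ is large compared with a scale depending on $\epsilon$.
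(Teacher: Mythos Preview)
Your proposal is correct and follows essentially the same route as the paper: integrate the sharp heat kernel bounds of Theorem~\ref{main} over $t\in(0,\infty)$, evaluate the resulting Gamma integral, and collapse the constants via $\omega_N=\pi^{N/2}/\Gamma(N/2+1)$ together with $\Gamma(N/2+1)=\tfrac{N(N-2)}{4}\Gamma(N/2-1)$; your explicit handling of the asymptotic (first $d(x,y)\to\infty$ using $\beta\to 0$ and $\kappa_x(\epsilon d(x,y))\to\kappa_\infty$, then $\epsilon\downarrow 0$) is in fact more detailed than the paper's. One small slip: your displayed formula $\int_0^\infty(4\pi t)^{-N/2}e^{-a/t}\,\d t=\tfrac{\Gamma(N/2-1)}{4\pi^{N/2}}a^{1-N/2}$ should have $(4\pi)^{N/2}$ in the denominator; the missing factor $4^{N/2-1}$ is recovered from $a^{1-N/2}$ with $a=\tfrac{(1-\epsilon)d(x,y)^2}{4}$, so the final constants are unaffected.
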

\begin{proof}
The estimate follows from the elementary equality: for any $\lambda>0$ and $y\in X$,
\begin{eqnarray*}
\int_0^\infty t^{-\frac{N}{2}}\exp\left[-\frac{\lambda d(x,y)^2}{4t}\right]\,\d t&=&\left(\frac{4}{\lambda}\right)^{\frac{N}{2}-1}d(x,y)^{2-N}\int_0^\infty s^{\frac{N}{2}-2}e^{-s}\,\d t\\
&=&\left(\frac{4}{\lambda}\right)^{\frac{N}{2}-1}\Gamma\left(\frac{N}{2}-1\right)d(x,y)^{2-N}\\
&=&\frac{4^{N-1}\pi^{\frac{N}{2}}}{N(N-2)\lambda^{\frac{N}{2}-1}\omega_N}d(x,y)^{2-N},
\end{eqnarray*}
where the last inequality follows from the fact that $\omega_N=\pi^{N/2}/\Gamma(N/2+1)$ and $\Gamma(N/2+1)=\Gamma(N/2)N/2=\Gamma(N/2-1)N(N-2)/4$. The proof is completed by the integration of heat kernel lower and upper bounds in Theorem \ref{main} from 0 to $\infty$ along the time direction.
\end{proof}

The second application is on the large-time asymptotics of the heat kernel, which is a strengthening of the result obtained recently by the author with R. Jiang and H. Zhang in \cite[Theorem 4.1]{JLZ2014}.
\begin{corollary}\label{largetime}
Let $(X,d,\mu)$ be an $\RCD(0,N)$ space with $N\in \mathbb{N}\setminus\{1\}$ having the maximum volume growth \eqref{max-vol-1}. Then, for every path $t\mapsto y(t)$ from $(0,\infty)$ to $M$ satisfying $d(x,y(t))=O(\sqrt{t})$ as $t\rightarrow\infty$, it holds that
$$\lim_{t\rightarrow\infty}\mu(B(x,\sqrt{t}))p_t(x,y(t))\exp\left\{\frac{d(x,y(t))^2}{4t}\right\}=(4\pi)^{-\frac{N}{2}}\omega_N.$$
\end{corollary}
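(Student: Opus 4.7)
The plan is to combine the two-sided heat kernel estimates \eqref{UE} and \eqref{LE} of Theorem \ref{main} with the identity $\mu(B(x,\sqrt t))=\kappa_x(\sqrt t)t^{N/2}$, and then let $t\to\infty$ followed by $\epsilon\to0$. Since $d(x,y(t))=O(\sqrt t)$, there is a constant $C>0$ with $d(x,y(t))^2/t\le C^2$ for all $t$ sufficiently large. Multiplying \eqref{UE} and \eqref{LE} through by $\mu(B(x,\sqrt t))\exp\{d(x,y(t))^2/(4t)\}$, the Gaussian factors simplify: the upper bound becomes $(1+C_N(\epsilon+\beta))(\kappa_x(\sqrt t)/\kappa_\infty)\omega_N(4\pi)^{-N/2}\exp\{\epsilon d(x,y(t))^2/(4t)\}$, and the lower bound becomes $(\kappa_x(\sqrt t)/\kappa_x(\epsilon d(x,y(t))))\omega_N(4\pi)^{-N/2}\exp\{-\epsilon(\epsilon+2)^2d(x,y(t))^2/(4t)\}$.

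Along any subsequence $t_n\to\infty$ with $d(x,y(t_n))\to\infty$, monotonicity of $\kappa_x$ gives $\kappa_x(\sqrt{t_n})\to\kappa_\infty$ and $\kappa_x(\epsilon d(x,y(t_n)))\to\kappa_\infty$; the remark after Theorem \ref{main} gives $\beta\to0$; and the Gaussian correction factors lie in $[\exp\{-\epsilon(\epsilon+2)^2C^2/4\},\exp\{\epsilon C^2/4\}]$. Sending first $t_n\to\infty$ and then $\epsilon\to0$ therefore squeezes the target quantity to $(4\pi)^{-N/2}\omega_N$ on such subsequences.

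The remaining case is a subsequence $t_n\to\infty$ along which $d(x,y(t_n))$ stays bounded, say by $M$. Here Theorem \ref{main} is not tight---neither $\kappa_x(\epsilon d(x,y(t_n)))$ nor $\beta$ reaches its limiting value---so I would bypass it and apply the parabolic Harnack inequality of Lemma \ref{para-harnack} directly. Choosing $s=t_n$, $t=t_n(1+\delta)$, $y=y(t_n)$, $z=x$ gives the upper sandwich $t_n^{N/2}p_{t_n}(x,y(t_n))\le(t_n(1+\delta))^{N/2}p_{t_n(1+\delta)}(x,x)\exp\{M^2/(4\delta t_n)\}$, and the choice $s=t_n(1-\delta)$, $t=t_n$, $y=x$, $z=y(t_n)$ gives the matching lower sandwich. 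By the on-diagonal asymptotic \eqref{main-1} (from \cite[Theorem 4.1]{JLZ2014}), both ends converge to $\omega_N\kappa_\infty^{-1}(4\pi)^{-N/2}$ as $t_n\to\infty$ and $\delta\to0$; combined with $\kappa_x(\sqrt{t_n})\to\kappa_\infty$ and $\exp\{d(x,y(t_n))^2/(4t_n)\}\to1$, this yields the desired limit on this subsequence as well. The hard part is precisely the bounded case: the pointwise bounds of Theorem \ref{main} are too coarse to capture the leading constant on-diagonal, and pairing Harnack with \eqref{main-1} is what provides the necessary precision.
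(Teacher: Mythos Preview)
Your argument is correct and follows essentially the same strategy as the paper: use the sharp bounds of Theorem~\ref{main} in the ``far'' regime and the Harnack inequality together with the on-diagonal asymptotic \eqref{main-1} in the ``near'' regime. The paper phrases the split as $d(x,y(t))=o(\sqrt t)$ versus not (citing \cite[Theorem~4.1]{JLZ2014} and \cite[Corollary~2.3]{LiTamWang}), whereas you split along subsequences with $d(x,y(t_n))\to\infty$ versus $d(x,y(t_n))$ bounded; this is a cosmetic difference, since your Theorem~\ref{main} argument still works when $d\to\infty$ with $d=o(\sqrt t)$, and the standard subsequence trick covers all sequences.
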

\begin{proof}
The case when $d(x,y(t))=o(\sqrt{t})$ as $t\rightarrow\infty$ follows from \cite[Theorem 4.1]{JLZ2014}, and the other case follows from the same method in the proof of \cite[Corollary 2.3]{LiTamWang} with the heat kernel bounds in Theorem \ref{main}.
\end{proof}

\section{Large-time asymptotics of the entropy}
\hskip\parindent In this section, we show the large-time asymptotics of entropies as our main application of Theorem \ref{main}. We first give definitions of the Perelman entropy and the Nash entropy in our non-smooth context. The former is introduced by G. Perelman as $\mathcal{W}$-entropy in his celebrated paper \cite{Perelman2002}, which turned out to be an important tool in the study of the Ricci flow. Our definition is motivated from Ni \cite{Ni2004a,Ni2004b,Ni2010}, where the similar entropy for the linear heat equation in the Riemannian manifold is studied. See also \cite{Lixd2012} for parallel studies on the linear heat equation with Laplacian replaced by Witten--Laplacian. The later is originated from J. Nash's seminal paper \cite{Nash1958}.

Let $(X,d,\mu)$ be an $\RCD(K,N)$ space with $K\in \R$ and $N\in (1,\infty)$, and let $t>0$. We always fix $x\in X$. Define the Perelman entropy as
$$\mathcal{W}(p,t)=\int_X\left(t|\nabla f|_w^2+f-N \right)p\,\d\mu,$$
where $f$ is defined by $p=(4\pi t)^{-N/2}e^{-f}$. And define the Nash entropy as
$$\mathcal{N}(p,t)=-\int_X p\log p\,\d\mu-\frac{N}{2}\log(4\pi t)-\frac{N}{2}.$$

The main result in this section is presented in the next theorem.
\begin{theorem}\label{largeentropy}
Let $(X,d,\mu)$ be an $\RCD(0,N)$ space with $N\in \mathbb{N}\setminus\{1\}$, having the maximum volume growth \eqref{max-vol-1}. Then
$$\lim_{t\rightarrow\infty}\mathcal{W}(p,t)=\lim_{t\rightarrow\infty}\mathcal{N}(p,t)=\log\Big(\frac{\kappa_\infty}{\omega_N}\Big).$$
\end{theorem}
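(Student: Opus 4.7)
The plan is to combine the sharp heat kernel bounds of Theorem \ref{main} with the identity $\frac{d}{ds}\bigl(s\mathcal{N}(p,s)\bigr) = \mathcal{W}(p,s)$ and the monotonicity of $\mathcal{W}$ along the heat flow. First, using stochastic completeness \eqref{stocom}, one rewrites the Nash entropy as
$$\mathcal{N}(p,t) = \int_X f\, p\, \d\mu - \tfrac{N}{2}, \qquad f := -\log p - \tfrac{N}{2}\log(4\pi t),$$
so that $p = (4\pi t)^{-N/2}e^{-f}$ and $\nabla p = -p\,\nabla f$. A direct differentiation of $\mathcal{N}$ along the heat equation, combined with integration by parts, yields $\frac{d\mathcal{N}}{ds} = \int |\nabla f|_w^2\, p\, \d\mu - N/(2s)$, whence a short manipulation delivers the identity $\frac{d}{ds}(s\mathcal{N}) = \mathcal{W}$. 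This reduces both asymptotics to a single one.

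Next, I compute the large-time asymptotic of $\mathcal{N}$ directly from Theorem \ref{main}. Taking logarithms of the two-sided bound pins $f(y)$ pointwise between
$$\log\frac{\kappa_\infty}{(1+C_N(\epsilon+\beta))\omega_N} + \frac{(1-\epsilon)\,d(x,y)^2}{4t}$$
and
$$\log\frac{\kappa_x(\epsilon\, d(x,y))}{\omega_N} + \frac{(1+\epsilon(\epsilon+2)^2)\,d(x,y)^2}{4t}.$$
Integrating against $p_t(x,\cdot)$ and splitting the domain into $\{d(x,y)\leq A\sqrt{t}\}$ and its complement lets the Gaussian decay absorb the tail, while in the bulk one exploits $\kappa_x(\epsilon\, d(x,y))\downarrow \kappa_\infty$ and $\beta\to 0$ as $d(x,y)\to\infty$ (the remark following Theorem \ref{main}). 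The only nontrivial remainder is the second moment $M_2(t):=\int p_t(x,y)\,d(x,y)^2\,\d\mu(y)$. I compute $M_2(t)/(4t)\to N/2$ via the co-area formula (Lemma \ref{co-area}), squeezing $s(x,r)$ between $N\kappa_\infty r^{N-1}$ (Lemma \ref{lowerboundary}) and $N\kappa_x(r) r^{N-1}$ (from Lemma \ref{volume-growth}), the rescaling $r=\sqrt{t}\,u$, and the Gaussian moment $\int_0^\infty u^{N+1}e^{-u^2/4}\,\d u = 2^{N+1}\Gamma(N/2+1)$ together with $\omega_N = \pi^{N/2}/\Gamma(N/2+1)$. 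Sending $\epsilon\downarrow 0$ after $t\to\infty$ gives $\lim_{t\to\infty}\mathcal{N}(p,t)=\log(\kappa_\infty/\omega_N)$.

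For the Perelman entropy I first establish the monotonicity $\frac{d}{dt}\mathcal{W}(p,t)\leq 0$. Classically this is Perelman's computation producing the non-positive quantity $-2t\int p\,(\,|\Hess f - g/(2t)|^2 + \Ric(\nabla f,\nabla f)\,)\,\d\mu$; in the $\RCD(0,N)$ setting, the Bakry--\'Emery inequality $\Gamma_2(f)\geq\frac{1}{N}(\Delta f)^2$ is exactly the trace-Hessian substitute needed for the same differential estimate, after approximating $f=-\log p - (N/2)\log(4\pi t)$ (whose singularities are controlled by Theorem \ref{main}) by test functions in the domain of $\Gamma_2$ and using the Gaussian decay of $p_t$ and $\nabla p_t$ to justify boundary vanishing. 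Given monotonicity, integrating the identity from the first step gives
$$t\,\mathcal{N}(p,t) - t_0\,\mathcal{N}(p,t_0) = \int_{t_0}^t \mathcal{W}(p,s)\,\d s;$$
dividing by $t$ and letting $t\to\infty$, a Ces\`aro argument (monotone non-increasing $\mathcal{W}$ has a limit in $[-\infty,+\infty)$, and the Ces\`aro mean tends to the same limit) forces $\lim_{t\to\infty}\mathcal{W}(p,t) = \lim_{t\to\infty}\mathcal{N}(p,t) = \log(\kappa_\infty/\omega_N)$.

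The main obstacle will be making the monotonicity of $\mathcal{W}$ fully rigorous in the non-smooth $\RCD(0,N)$ framework: the formal Perelman computation manipulates $\Hess f$ and performs integration by parts that are not directly available, so the argument must be recast purely in terms of $\Gamma$ and $\Gamma_2$ with careful approximation of $f$ and control of the behaviour at infinity. A parallel technical point is the uniform control of the error term $\beta(y,t)$ from Theorem \ref{main} when integrated against $p_t$, for which the near/far splitting sketched above, together with the decay $\beta\to 0$ as $d(x,y)\to\infty$, is essential and relies crucially on the maximum volume growth hypothesis.
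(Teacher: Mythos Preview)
Your treatment of the Nash entropy is essentially the paper's own: plug the two-sided bounds of Theorem \ref{main} into $-\int p\log p$, split into near/far regions, use the co-area substitute (Lemma \ref{co-area}) together with the two-sided control $N\kappa_\infty r^{N-1}\le s(x,r)\le N\kappa_x(r)r^{N-1}$, and compute the Gaussian moments. The paper does exactly this in Theorem \ref{asym-entropy}.

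The deduction of $\lim\mathcal W$ from $\lim\mathcal N$ is where you diverge. You use the differential identity $\tfrac{d}{ds}(s\mathcal N)=\mathcal W$ and a Ces\`aro argument; the paper instead writes $\mathcal W=\mathcal N+t\int|\nabla\log p_t|_w^2\,p_t\,\d\mu-\tfrac N2$, notes via the Li--Yau inequality that this extra term is $\le 0$, and combines the lower bound $\mathcal W\ge\log(\kappa_\infty/\omega_N)$ (Corollary \ref{bound-W}) with monotonicity to pick a subsequence along which $\mathcal W(t_i)\to\lim\mathcal N$. Both routes are short once monotonicity is available; your Ces\`aro device is clean and has the pleasant feature of not needing the a~priori lower bound for $\mathcal W$ (a limit $-\infty$ would contradict the finite Ces\`aro limit).

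The real divergence is in how monotonicity of $\mathcal W$ is obtained. You propose the classical Perelman differentiation, replacing $|\Hess f|^2$ by the Bakry--\'Emery surrogate $\Gamma_2(f)\ge\tfrac1N(\Delta f)^2$, with $f=-\log p_t-\tfrac N2\log(4\pi t)$. In an $\RCD$ space this is precisely the step that is hard to make rigorous: $\Gamma_2$ is only a weak (measure-valued) object, and $\log p_t$ is not a~priori in the class of test functions on which the dimensional Bochner inequality is known; the approximations you mention must simultaneously tame the singularity at $t=0$, the behaviour at infinity, and the lack of higher regularity. The paper sidesteps all of this by following Baudoin--Garofalo: it introduces $\Psi(t)=P_t(P_{T-t}f_\delta|\nabla\log P_{T-t}f_\delta|_w^2)$, uses the semigroup-level differential inequality of Lemma \ref{lemma-2} (valid in $\RCD^\ast(K,N)$) together with the integrated identity of Lemma \ref{lemma-1}, and only at the end specialises to $p_{T-t}$. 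This interpolation approach never needs a pointwise $\Gamma_2$ on $\log p$, which is what makes it robust in the non-smooth setting. If you pursue your route, that is the gap you must close; otherwise, invoking Lemmas \ref{lemma-1}--\ref{lemma-2} as the paper does gives monotonicity with no further work, and your Ces\`aro step then finishes the argument.
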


In the following two subsections, we show the proof of the theorem.

\subsection{Large-time asymptotics of the Nash entropy}
Now we present the result on the large-time asymptotics of the Nash entropy. The method of proof, originated from \cite{Ni2010}, is a direct application of the sharp heat kernel bounds presented in Theorem \ref{main}.
\begin{theorem}\label{asym-entropy}
Let $(X,d,\mu)$ be an $\RCD(0,N)$ space with $N\in \mathbb{N}\setminus\{1\}$, having the maximum volume growth \eqref{max-vol-1}. Then
$$\lim_{t\rightarrow\infty}\mathcal{N}(p,t)=\log\Big(\frac{\kappa_\infty}{\omega_N}\Big).$$
\end{theorem}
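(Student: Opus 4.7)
The plan is to substitute the sharp two-sided heat kernel bounds from Theorem \ref{main} into the definition of $\mathcal{N}(p,t)$ and then pass to the limit $t\to\infty$ followed by $\epsilon\to 0$.  Writing $p:=p_t(x,\cdot)$, which has unit mass by \eqref{stocom}, and taking $-\log$ of \eqref{UE} and \eqref{LE} before integrating against $p$ produces the sandwich
$$\log\frac{\kappa_\infty}{\omega_N} - E_1(t) + \frac{(1-\epsilon)M_2(t)}{4t} - \frac{N}{2} \,\le\, \mathcal{N}(p,t) \,\le\, \log\frac{\kappa_\infty}{\omega_N} + E_2(t) + \frac{(1+\epsilon(\epsilon+2)^2)M_2(t)}{4t} - \frac{N}{2},$$
where $M_2(t):=\int_X p_t(x,y)d(x,y)^2\,\d\mu(y)$ is the second moment and $E_1(t), E_2(t)$ denote the (nonnegative) error integrals of $p$ against $\log(1+C_N(\epsilon+\beta(y)))$ and $\log[\kappa_x(\epsilon d(x,y))/\kappa_\infty]$, respectively.

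The first key step is the second-moment asymptotic
$$\lim_{t\to\infty}\frac{M_2(t)}{t}=2N. \qquad (\ast)$$
I would obtain $(\ast)$ by rewriting $M_2(t)=\int_0^\infty r^2\,|p_t(x,\cdot)|_{\partial B(x,r)}\,\d r$ via Lemma \ref{co-area}; the upper estimate combines \eqref{UE} with the Bishop--Gromov-type inequality $s(x,r)\le Nr^{N-1}\kappa_x(r)$ noted in the remark after Theorem \ref{main}, and the lower estimate combines \eqref{LE} with the sharp complement $s(x,r)\ge N\kappa_\infty r^{N-1}$ from Lemma \ref{lowerboundary}. After the substitution $u=r^2/(4t)$, both integrals reduce to a standard Gaussian second moment; using $\omega_N=\pi^{N/2}/\Gamma(N/2+1)$ together with $\kappa_x(r)\to\kappa_\infty$ yields $2Nt$ modulated by factors that tend to $1$ as $t\to\infty$ and $\epsilon\to 0$.

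The second step is to show $E_1(t), E_2(t)\to 0$ as $t\to\infty$. By $(\ast)$ and Chebyshev, $p_t(x,\cdot)$ puts only $O(\delta)$ mass outside $B(x,\sqrt{t}/\sqrt\delta)$, while the lower bound \eqref{LE} combined with Lemma \ref{doubling}(i) shows that the mass inside $B(x,\delta\sqrt t)$ is $O(\delta^N)$. On the bulk annulus $\delta\sqrt t\le d(x,y)\le \sqrt{t}/\sqrt\delta$ we have $d(x,y)\to\infty$ uniformly as $t\to\infty$, so $\beta(y)\to 0$ (by the remark after Theorem \ref{main}) and $\kappa_x(\epsilon d(x,y))\to\kappa_\infty$ uniformly; hence both $\log$-integrands vanish uniformly on the bulk, while on the two tails they remain bounded by constants depending only on $\epsilon$ (via $\beta\le C\epsilon^{-2N}$ and the monotonicity of $\kappa_x$). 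Dominated convergence gives $E_1(t),E_2(t)\to 0$; combining with $(\ast)$ and sending first $t\to\infty$ then $\epsilon\to 0$ collapses the sandwich to $\log(\kappa_\infty/\omega_N)$.

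The principal obstacle will be $(\ast)$: matching the Gaussian constants requires controlling the small-$r$ portion of the radial integral, where $\kappa_x(r)$ is not dominated by $\kappa_\infty$. I would handle this by splitting at some fixed $r_0>0$ and absorbing the small-$r$ piece into $o(t)$ using the coarse Gaussian bound \eqref{fullgaussian}, while the large-$r$ portion is precisely where the sharp constants of Theorem \ref{main} take effect. A secondary subtlety is ensuring a dominating function for the convergence argument in step three near $y=x$, which the monotonicity of $\kappa_x$ together with local integrability of $\log \kappa_x$ supplies.
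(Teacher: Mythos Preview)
Your proposal is essentially correct and follows the same approach as the paper: substitute the sharp bounds of Theorem \ref{main} into the definition of $\mathcal{N}(p,t)$, reduce to a second-moment computation, and control the residual errors using the co-area formula (Lemma \ref{co-area}) and the surface-measure bound of Lemma \ref{lowerboundary}. The paper organises this by proving $\limsup\mathcal{N}\le\log(\kappa_\infty/\omega_N)$ and $\liminf\mathcal{N}\ge\log(\kappa_\infty/\omega_N)$ separately and splitting every radial integral at a \emph{fixed} radius $D$ (chosen so that $\kappa_x(\epsilon r)\le(1+\sigma)\kappa_\infty$, $s(x,r)\le(1+\sigma)N\kappa_\infty r^{N-1}$ and $\beta\le\sigma$ for $r\ge D$), whereas you split at the $t$-dependent radii $\delta\sqrt t$ and $\sqrt t/\sqrt\delta$ via a Chebyshev/mass-concentration argument. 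Both decompositions work; the fixed-$D$ version is marginally cleaner because the inner piece over $B(x,D)$ is manifestly $O(t^{-N/2})$ and one sends only $t\to\infty$, $\sigma\to0$, $\epsilon\to0$ rather than also $\delta\to0$.

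One small slip: to show the mass inside $B(x,\delta\sqrt t)$ is $O(\delta^N)$ you need the \emph{upper} bound on $p_t$ (either \eqref{UE} or the crude \eqref{fullgaussian}) together with $\mu(B(x,\delta\sqrt t))=\kappa_x(\delta\sqrt t)(\delta\sqrt t)^N$, not the lower bound \eqref{LE} you cite. This is harmless for the overall argument.
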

\begin{proof}
Let $t>0$. Note that $\lim_{s\rightarrow\infty}\kappa_x(s)=\kappa_\infty$ and $\lim_{d(x,y)\rightarrow\infty}\beta=0$. Then, for any $\sigma>0$, there is a sufficient big constant $D$ such that, for any $y\in X$ satisfying $d(x,y)\geq D$, we have
\begin{eqnarray}\label{longtime-1}
\kappa_x(\epsilon^{2N+1}d(x,y))\leq (1+\sigma)\kappa_\infty,
\end{eqnarray}
 and $\beta\leq\sigma$, $\kappa_x(\epsilon d(x,y))\geq (1-\sigma)/\kappa_\infty$, where $\epsilon>0$. Hence,
\begin{eqnarray}\label{U.E.}
p_t(x,y)
\leq\left[1+C_N(\epsilon+\sigma)\right]\frac{\omega_N}{\kappa_\infty}(4\pi t)^{-\frac{N}{2}}\exp\left\{-\frac{1-\delta}{4t}d(x,y)^2\right\},
\end{eqnarray}
and
\begin{eqnarray}\label{L.E.}
p_t(x,y)\geq\frac{\omega_N}{\kappa_\infty}(1-\sigma)(4\pi t)^{-\frac{N}{2}}\exp\left\{-\frac{1+\epsilon(\epsilon+2)^2}{4t}d(x,y)^2\right\}.
\end{eqnarray}

We first show that
\begin{eqnarray}\label{uppernash}
\lim_{t\rightarrow\infty}\mathcal{N}(p,t)\leq\log\Big(\frac{\kappa_\infty}{\omega_N}\Big).
\end{eqnarray}
By the definition of $\mathcal{N}(p,t)$, the lower bound \eqref{LE}, and the stochastic completeness \eqref{stocom}, we have
\begin{eqnarray}\label{nash}
&&\mathcal{N}(p,t)\cr
&\leq& -\int_X p_t(x,y)\log\left[\frac{\omega_N}{\kappa_x(\epsilon d(x,y))}(4\pi t)^{-\frac{N}{2}}\exp\left\{-\frac{1+\epsilon(\epsilon+2)^2}{4t}d(x,y)^2\right\}\right]\,\d\mu(y)\cr
&& -\frac{N}{2}\log(4\pi t)-\frac{N}{2}\cr
&=&-\frac{N}{2}-\int_X p_t(x,y)\log\left[\frac{\omega_N}{\kappa_x(\epsilon d(x,y))}\right]\,\d\mu(y)\cr
&&+\frac{1+\epsilon(\epsilon+2)^2}{4t}\int_X p_t(x,y)d(x,y)^2\,\d\mu(y)\cr
&=:&-\frac{N}{2} + \rm{I}+\rm{II},
\end{eqnarray}
where we used the stochastic completeness \eqref{stocom} in the first equality.

Applying Lemma \ref{co-area} and \eqref{L.E.}, we obtain that
\begin{eqnarray*}
{\rm{I}}&\leq& -\int_0^\infty \frac{\omega_N}{\kappa_\infty}(1-\sigma)(4\pi t)^{-\frac{N}{2}}\exp\left\{-\frac{1+\epsilon(\epsilon+2)^2}{4t}r^2\right\}\log\left[\frac{\omega_N}{\kappa_x(\epsilon r)}\right]s(x,r)\,\d r\\
&=&-\left(\int_0^D+\int_D^\infty\right)\frac{\omega_N}{\kappa_\infty}(1-\sigma)(4\pi t)^{-\frac{N}{2}}\exp\left\{-\frac{1+\epsilon(\epsilon+2)^2}{4t}r^2\right\}\\
&&\times \log\left[\frac{\omega_N}{\kappa_x(\epsilon r)}\right]s(x,r)\,\d r\\
&=:&\rm{I}_1+\rm{I}_2.
\end{eqnarray*}
It is easy to know that
$$\lim_{t\rightarrow\infty}{\rm{I}_1}=0.$$
By Lemma \ref{lowerboundary} and \eqref{longtime-1}, we derive that
\begin{eqnarray*}
{\rm{I}_2}\leq \log\left[\frac{(1+\sigma)\kappa_\infty}{\omega_N}\right](1-\sigma)N\omega_N(4\pi t)^{-\frac{N}{2}}\int_D^\infty\exp\left\{-\frac{1+\epsilon(\epsilon+2)^2}{4t}r^2\right\}r^{N-1}\,\d r.
\end{eqnarray*}
By direct calculation, we get
$$\int_0^\infty\exp\left\{-\frac{1+\epsilon(\epsilon+2)^2}{4t}r^2\right\}r^{N-1}\,\d r=\frac{\Gamma(\frac{N}{2})}{2\pi^{\frac{N}{2}}}(4\pi t)^{\frac{N}{2}}\big(1+\epsilon(\epsilon+2)^2\big)^{-\frac{N}{2}}.$$
Hence,
\begin{eqnarray*}
{\rm{I}_2}&\leq& \log\left[\frac{(1+\sigma)\kappa_\infty}{\omega_N}\right](1-\sigma)N\omega_N(4\pi t)^{-\frac{N}{2}}\frac{\Gamma(\frac{N}{2})}{2\pi^{\frac{N}{2}}}(4\pi t)^{\frac{N}{2}}\big(1+\epsilon(\epsilon+2)^2\big)^{-\frac{N}{2}}\\
&=&\log\left[\frac{(1+\sigma)\kappa_\infty}{\omega_N}\right](1-\sigma)\big(1+\epsilon(\epsilon+2)^2\big)^{-\frac{N}{2}}.
\end{eqnarray*}
Thus,
\begin{eqnarray}\label{upper-I}
\lim_{t\rightarrow\infty}{\rm{I}}\leq\log\left[\frac{(1+\sigma)\kappa_\infty}{\omega_N}\right]
(1-\sigma)\big(1+\epsilon(\epsilon+2)^2\big)^{-\frac{N}{2}}.
\end{eqnarray}

Applying the upper bound \eqref{U.E.} and Lemma \ref{co-area}, we have
\begin{eqnarray*}
{\rm{II}}&\leq& \left[1+C_N(\epsilon+\sigma)\right]\frac{\big(1+\epsilon(\epsilon+2)^2\big)\omega_N}{4t(4\pi t)^{N/2}\kappa_\infty}\int_X \exp\left\{-\frac{1-\epsilon}{4t}d(x,y)^2\right\} d(x,y)^2\,\d\mu(y)\\
&=&\left[1+C_N(\epsilon+\sigma)\right]\frac{\big(1+\epsilon(\epsilon+2)^2\big)\omega_N}{4t(4\pi t)^{N/2}\kappa_\infty}\left(\int_{d(x,y)< D}+\int_{d(x,y)\geq D}\right) \exp\left\{-\frac{1-\epsilon}{4t}d(x,y)^2\right\}\\
&&\times  d(x,y)^2\,\d\mu(y)\\
&\leq&\left[1+C_N(\epsilon+\sigma)\right]\frac{\big(1+\epsilon(\epsilon+2)^2\big)\omega_N}{4t(4\pi t)^{N/2}\kappa_\infty}\left(D^2\mu(B(x,D))+\int_D^\infty \exp\left\{-\frac{1-\epsilon}{4t}r^2\right\} r^2s(x,r)\,\d r\right)\\
&=:&\left[1+C_N(\epsilon+\sigma)\right]\left(\rm{II}_1+\rm{II}_2\right).
\end{eqnarray*}
It is obvious to see that
$$\lim_{t\rightarrow\infty}{\rm{II}_1}=0.$$
Note that we can also require that $s(x,r)\leq(1+\sigma)N\kappa_\infty r^{N-1}$ for any $r>D$. Then
\begin{eqnarray*}
{\rm{II}_2}&\leq& \frac{\big(1+\epsilon(\epsilon+2)^2\big)\omega_N}{4t(4\pi t)^{N/2}\kappa_\infty}\int_D^\infty \exp\left\{-\frac{1-\epsilon}{4t}r^2\right\} r^2 (1+\sigma)N\kappa_\infty r^{N-1}\,\d r\\
&=&\frac{(1+\sigma)\big(1+\epsilon(\epsilon+2)^2\big)N\omega_N}{4t(4\pi t)^{N/2}}\int_D^\infty \exp\left\{-\frac{1-\epsilon}{4t}r^2\right\} r^{N+1}\,\d r\\
&\leq& \frac{(1+\sigma)\big(1+\epsilon(\epsilon+2)^2\big)N\omega_N}{4t(4\pi t)^{N/2}}\int_0^\infty \exp\left\{-\frac{1-\epsilon}{4t}r^2\right\} r^{N+1}\,\d r\\
&=& \frac{(1+\sigma)\big(1+\epsilon(\epsilon+2)^2\big)N\omega_N}{4t(4\pi t)^{N/2}}\frac{1}{2}\Gamma\Big(\frac{N}{2}+1\Big)\left(\frac{4t}{1-\epsilon}\right)^{\frac{N}{2}+1}\\
&=&\frac{N}{2}\frac{(1+\sigma)\big(1+\epsilon(\epsilon+2)^2\big)}{(1-\epsilon)^{N/2+1}}.
\end{eqnarray*}
Hence,
\begin{eqnarray}\label{upper-II}
\lim_{t\rightarrow\infty}{\rm{II}}\leq \frac{N}{2}\frac{(1+\sigma)\big(1+\epsilon(\epsilon+2)^2\big)}{(1-\epsilon)^{N/2+1}}.
\end{eqnarray}

Thus, combining \eqref{nash}, \eqref{upper-I} and \eqref{upper-II}, we obtain that
\begin{eqnarray*}\lim_{t\rightarrow\infty}\mathcal{N}(p,t)&\leq& -\frac{N}{2}+ \log\left[\frac{(1+\sigma)\kappa_\infty}{\omega_N}\right](1-\sigma)\big(1+\epsilon(\epsilon+2)^2\big)^{-\frac{N}{2}}\\
&&+\frac{N}{2}\frac{(1+\sigma)\big(1+\epsilon(\epsilon+2)^2\big)}{(1-\epsilon)^{N/2+1}}.
\end{eqnarray*}
Letting first $\sigma\rightarrow0$ and then $\epsilon\rightarrow0$, we finish the proof of \eqref{uppernash}.

Now we begin to show that
\begin{eqnarray}\label{lower-nash}
\lim_{t\rightarrow\infty}\mathcal{N}(p,t)\geq \log\Big(\frac{\kappa_\infty}{\omega_N}\Big).
\end{eqnarray}
By the definition of $\mathcal{N}(p,t)$ and the upper bound \eqref{U.E.}, we have that
\begin{eqnarray}\label{lowernash}
&&\mathcal{N}(p,t)\cr
&\geq&-\int_X p_t(x,y)\log\left[\big(1+C_N(\epsilon+\sigma)\big)\frac{\omega_N}{\kappa_\infty}(4\pi t)^{-\frac{N}{2}}\exp\left\{-\frac{1-\epsilon}{4t}d(x,y)^2\right\}\right]\,\d\mu(y)\cr
&&-\frac{N}{2}\log(4\pi t)-\frac{N}{2}\cr
&=&\frac{1-\epsilon}{4t}\int_X p_t(x,y)d(x,y)^2\,\d\mu(y)-\log\left[\big(1+C_N(\epsilon+\sigma)\big)\frac{\omega_N}{\kappa_\infty}\right]-\frac{N}{2},
\end{eqnarray}
where we used \eqref{stocom} again in the last equality. Let
$${\rm{III}}=\frac{1-\epsilon}{4t}\int_X p_t(x,y)d(x,y)^2\,\d\mu(y).$$
Then, by the lower bound \eqref{L.E.}, we deduce that
\begin{eqnarray*}
{\rm{III}}&\geq& \frac{1-\epsilon}{4t}\int_X \frac{\omega_N}{\kappa_\infty}(1-\sigma)(4\pi t)^{-\frac{N}{2}}\exp\left\{-\frac{1+\epsilon(\epsilon+2)^2}{4t}d(x,y)^2\right\}d(x,y)^2\,\d\mu(y)\\
&=&\frac{(1-\epsilon)(1-\sigma)\omega_N}{4t(4\pi t)^{N/2}\kappa_\infty}\int_X \exp\left\{-\frac{1+\epsilon(\epsilon+2)^2}{4t}d(x,y)^2\right\}d(x,y)^2\,\d\mu(y)\\
&=& \rm{III}_1+\rm{III}_2,
\end{eqnarray*}
where
\begin{eqnarray*}
{\rm{III}_1}=\frac{(1-\epsilon)(1-\sigma)\omega_N}{4t(4\pi t)^{N/2}\kappa_\infty}\int_{d(x,y)<D} \exp\left\{-\frac{1+\epsilon(\epsilon+2)^2}{4t}d(x,y)^2\right\}d(x,y)^2\,\d\mu(y),
\end{eqnarray*}
and
\begin{eqnarray*}
{\rm{III}_2}=\frac{(1-\epsilon)(1-\sigma)\omega_N}{4t(4\pi t)^{N/2}\kappa_\infty}\int_{d(x,y)\geq D} \exp\left\{-\frac{1+\epsilon(\epsilon+2)^2}{4t}d(x,y)^2\right\}d(x,y)^2\,\d\mu(y).
\end{eqnarray*}
It is easy to see that
\begin{eqnarray}\label{III-1}
\lim_{t\rightarrow\infty}{\rm{III}_1}=0.
\end{eqnarray}
By Lemma \ref{co-area}, we have that
\begin{eqnarray*}
{\rm{III}_2}=\frac{(1-\epsilon)(1-\sigma)\omega_N}{4t(4\pi t)^{N/2}\kappa_\infty}\int_D^\infty \exp\left\{-\frac{1+\epsilon(\epsilon+2)^2}{4t}r^2\right\}r^2s(x,r)\,\d r.
\end{eqnarray*}
Applying Lemma \ref{lowerboundary} again, we derive that
\begin{eqnarray*}
{\rm{III}_2}&\geq&\frac{(1-\epsilon)(1-\sigma)\omega_N}{4t(4\pi t)^{N/2}\kappa_\infty}\int_D^\infty \exp\left\{-\frac{1+\epsilon(\epsilon+2)^2}{4t}r^2\right\}r^2N\kappa_\infty r^{N-1}\,\d r\\
&=&\frac{(1-\epsilon)(1-\sigma)N\omega_N}{4t(4\pi t)^{N/2}}\int_D^\infty \exp\left\{-\frac{1+\epsilon(\epsilon+2)^2}{4t}r^2\right\}r^{N+1}\,\d r\\
&=&\frac{(1-\epsilon)(1-\sigma)N\omega_N}{4t(4\pi t)^{N/2}}\left(\int_0^\infty -\int_0^D\right) \exp\left\{-\frac{1+\epsilon(\epsilon+2)^2}{4t}r^2\right\}r^{N+1}\,\d r\\
&\geq&\frac{(1-\epsilon)(1-\sigma)N\omega_N}{4t(4\pi t)^{N/2}}\left[\frac{1}{2}\Gamma\Big(\frac{N}{2}+1\Big)\left(\frac{4t}{1+\epsilon(\epsilon+2)^2}\right)^{\frac{N}{2}+1}-\frac{D^{N+2}}{N+2}\right]\\
&=&\frac{N}{2}\frac{(1-\epsilon)(1-\sigma)}{\big(1+\epsilon(\epsilon+2)^2)^{N/2+1}} -  \frac{(1-\epsilon)(1-\sigma)N\omega_N}{4t(4\pi t)^{N/2}}\frac{D^{N+2}}{N+2}.
\end{eqnarray*}
Hence,  it is clear that
\begin{eqnarray}\label{III-2}
\lim_{t\rightarrow\infty}{\rm{III}_2}\geq \frac{N}{2}\frac{(1-\epsilon)(1-\sigma)}{\big(1+\epsilon(\epsilon+2)^2)^{N/2+1}}.
\end{eqnarray}
Thus, combining \eqref{lowernash} with \eqref{III-1} and \eqref{III-2}, we finish the proof of the lower estimate \eqref{lower-nash}.
\end{proof}

\subsection{Large-time asymptotics of the Perelman entropy}
In this subsection, we show the large-time asymptotics of the Perelman entropy, which is presented in the following theorem.
\begin{theorem}\label{asym-P-entropy}
Let $(X,d,\mu)$ be an $\RCD(0,N)$ space with $N\in \mathbb{N}\setminus\{1\}$, having the maximum volume growth \eqref{max-vol-1}.
Then
$$\lim_{t\rightarrow\infty}\mathcal{W}(p,t)=\lim_{t\rightarrow\infty}\mathcal{N}(p,t).$$
\end{theorem}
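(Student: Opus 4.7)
The plan is to write $\mathcal W$ as a derivative of $t\mathcal N$, combine the already-established Nash limit of Theorem \ref{asym-entropy} with a monotonicity property of $\mathcal W$, and conclude by a Cesaro argument.

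First I would establish the algebraic identity
\begin{equation*}
\mathcal W(p,t)=\mathcal N(p,t)+t\,\tfrac{d}{dt}\mathcal N(p,t)=\tfrac{d}{dt}\bigl[t\,\mathcal N(p,t)\bigr].
\end{equation*}
Setting $H(t):=-\int_X p_t\log p_t\,\d\mu$, the definitions give $\mathcal N(p,t)=H(t)-\tfrac{N}{2}\log(4\pi t)-\tfrac{N}{2}$, and after unpacking $p_t=(4\pi t)^{-N/2}e^{-f}$ one sees
\begin{equation*}
\mathcal W(p,t)=t\!\int_X\!\frac{|\nabla p_t|_w^2}{p_t}\,\d\mu+H(t)-\tfrac{N}{2}\log(4\pi t)-N.
\end{equation*}
Differentiating $H$ via $\partial_t p_t=\Delta p_t$ and the stochastic completeness \eqref{stocom}, an integration by parts (justified in the $\RCD$ setting through cut-offs and regularizations of $\log p_t$) yields Nash's identity $H'(t)=\int_X|\nabla p_t|_w^2/p_t\,\d\mu$, from which the displayed identity follows by straightforward algebra.

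Second, I would prove the companion monotonicity statement that $t\mapsto\mathcal W(p,t)$ is non-increasing on $(0,\infty)$; this is the $\RCD(0,N)$ analogue of Perelman's classical result and is flagged in the abstract as one of the paper's contributions. Heuristically one computes, on a smooth Riemannian manifold,
\begin{equation*}
\tfrac{d}{dt}\mathcal W(p,t)=-2t\!\int_X p_t\Bigl[\,\bigl|\Hess f-\tfrac{1}{2t}g\bigr|^2+\Ric(\nabla f,\nabla f)\,\Bigr]\,\d\mu\le 0,
\end{equation*}
the non-positivity coming from $\Ric\ge 0$. In the non-smooth setting this must be rewritten via the Bakry-Emery $\Gamma_2$-formalism together with the dimensional Bochner inequality $\Gamma_2(f)\ge \tfrac{1}{N}(\Delta f)^2$ equivalent to $\RCD(0,N)$. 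Making this rigorous requires regularizing $f=-\log p_t-\tfrac{N}{2}\log(4\pi t)$ (for instance replacing $p_t$ by $p_t+\varepsilon$, truncating $f$, and controlling the resulting error), and I expect this to be the principal technical obstacle.

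Finally, for each fixed $t_0>0$, integrating the identity of the first step on $[t_0,t]$ and dividing by $t$ gives
\begin{equation*}
\mathcal N(p,t)-\frac{t_0}{t}\mathcal N(p,t_0)=\frac{1}{t}\int_{t_0}^t\mathcal W(p,s)\,\d s.
\end{equation*}
The left-hand side converges to $L_N:=\log(\kappa_\infty/\omega_N)$ as $t\to\infty$ by Theorem \ref{asym-entropy}. By the monotonicity from the second step, $L_W:=\lim_{t\to\infty}\mathcal W(p,t)\in[-\infty,\infty)$ exists, and a standard Cesaro argument forces the right-hand side to converge to $L_W$; in particular $L_W$ is finite and equal to $L_N$, which is the asserted equality. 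Once monotonicity is granted, the passage from the Nash asymptotic to the Perelman asymptotic is thus essentially the identity $\mathcal W=\tfrac{d}{dt}(t\mathcal N)$ combined with Cesaro.
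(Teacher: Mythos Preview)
Your plan is correct and matches the paper's strategy in its essentials: both rely on the identity $\mathcal W(p,t)=\mathcal N(p,t)+t\int_X|\nabla\log p_t|_w^2\,p_t\,\d\mu-\tfrac{N}{2}$ (equivalently your $\mathcal W=\tfrac{d}{dt}[t\mathcal N]$), the Nash limit from Theorem~\ref{asym-entropy}, and the monotonicity of $\mathcal W$. The only difference in the final deduction is cosmetic: the paper extracts a subsequence $t_i\to\infty$ along which $t_i\int|\nabla\log p_{t_i}|_w^2 p_{t_i}-\tfrac{N}{2}\to 0$ (this uses that the bracket is $t\mathcal N'(p,t)\le 0$ by Li--Yau, together with convergence of $\mathcal N$), and then invokes monotonicity to upgrade to the full limit; your Cesaro averaging does the same job slightly more cleanly.

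Where your outline diverges substantively is in the proof of monotonicity. You propose to differentiate $\mathcal W$ directly and appeal to the dimensional Bochner inequality in $\Gamma_2$ form, following Ni's Riemannian computation. The paper instead follows Baudoin--Garofalo: it introduces the interpolant $\Psi(s)=P_s\big(p_{T-s}|\nabla\log p_{T-s}|_w^2\big)$, uses an entropy identity (Lemma~\ref{lemma-1}) and a ready-made differential inequality for $a(t)\Psi(t)$ borrowed from the $\RCD$ literature (Lemma~\ref{lemma-2}), and integrates with $a(t)=T-t$, $\gamma(t)=-\tfrac{N}{2(T-t)}$ to obtain the pointwise comparison $P_t(p_{T-t}W_{T-t})\ge p_T W_T$. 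This sidesteps the regularization of $\log p_t$ you anticipate as the main obstacle, since the needed $\Gamma_2$-estimate is already packaged at the level of the semigroup acting on bounded approximants $f_\delta$. Your direct-differentiation route should work in principle, but the semigroup interpolation is what the paper actually uses and is better adapted to the available $\RCD$ calculus.
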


We should mention that the method of proof of Theorem \ref{asym-P-entropy} is originated from \cite{Ni2004a,Ni2004b}. Thus we first need to prove the monotonicity of the $\mathcal{W}$ functional with respect to the time variable. In a very recent manuscript \cite{JZ2015}, R. Jiang and H. Zhang proved the monotonicity in the case when the metric measure space $(X,d,\mu)$ is compact by a different method.

Define
$$W_t=t|\nabla\log p_t|_w^2-2t\frac{\Delta p_t}{p_t}-\log p_t-\frac{N}{2}\log t.$$
Hence, it is immediate to see that
$$\mathcal{W}(p,t)=\int_Xp_{t}W_{t}\,\d\mu-\frac{N}{2}\log(4\pi)-N.$$

\begin{theorem}[Monotonicity]\label{mono-W}
Let $(X,d,\mu)$ be an $\RCD(0,N)$ space with $N\in (1,\infty)$, and let $T>0$. For any $t\in [0,T)$, it holds that
$$\mathcal{W}(p,T-t)\geq \mathcal{W}(p,T).$$
\end{theorem}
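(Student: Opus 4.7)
The plan is to establish the stronger fact that $t\mapsto \mathcal{W}(p,t)$ is non-increasing on $(0,\infty)$, from which the claim follows immediately by comparing the values at $s=T-t$ and $s=T$. The strategy mirrors Ni's calculation in the Riemannian setting \cite{Ni2004a, Ni2004b}. Setting $f := -\log p - \frac{N}{2}\log(4\pi t)$, so that $p = (4\pi t)^{-N/2}e^{-f}$, the heat equation $\partial_t p = \Delta p$ becomes $\partial_t f = \Delta f - |\nabla f|_w^2 - \frac{N}{2t}$. I aim to derive
\[
\frac{d}{dt}\mathcal{W}(p,t) \leq -\frac{2t}{N}\int_X \left(\Delta f - \frac{N}{2t}\right)^2 p \,\d\mu \leq 0, \qquad (\ast)
\]
and then integrate $(\ast)$ on $[T-t,T]$.

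The heart of the argument is a formal differentiation of $\int_X (t|\nabla f|_w^2 + f - N) p\,\d\mu$ in $t$. Substituting $\partial_t p = \Delta p$ and the equation for $\partial_t f$, and then moving derivatives off $f$ through integration by parts, the right-hand side should collapse to $-2t\int_X \Gamma_2(f)\, p\,\d\mu$ modulo lower-order completed-square terms, where $\Gamma_2(f) := \tfrac{1}{2}\Delta|\nabla f|_w^2 - \langle \nabla f,\nabla \Delta f\rangle$. The equivalence $\RCD(0,N)\Leftrightarrow \BE(0,N)$ established in \cite{eks2013, AmbrosioGigliSavare2012} then furnishes the Bochner inequality $\Gamma_2(f)\geq (\Delta f)^2/N$ in the appropriate weak sense, and combining this with the algebraic identity $\frac{1}{N}\bigl(\Delta f - \frac{N}{2t}\bigr)^2 = \frac{(\Delta f)^2}{N} - \frac{\Delta f}{t} + \frac{N}{4t^2}$ together with the integration-by-parts identity $\int \Delta f \cdot p\,\d\mu = \int |\nabla f|_w^2 p\,\d\mu$ produces $(\ast)$.

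The principal technical obstacle, and the reason the non-compact setting is harder than the compact case treated in \cite{JZ2015}, is to justify the integration by parts at spatial infinity and the use of the Bochner inequality for $f$, which is not a priori in $\mathcal{D}(\Delta)$ globally. I would handle this by: (i) invoking the sharp Gaussian upper bound in Theorem \ref{main} together with Li--Yau-type gradient estimates known in $\RCD(0,N)$ to bound $|\nabla f|_w^2 p$ and $(\Delta f)^2 p$ by Gaussian-integrable quantities; (ii) localizing via cutoffs $\eta_R \in \Lip_c(X)$ with $\eta_R = 1$ on $B(x,R)$ and $|\nabla\eta_R|_w\leq C/R$, performing all integrations by parts on $\supp \eta_R$, and then letting $R\to\infty$ by dominated convergence; and (iii) regularizing $p$ by $P_\epsilon p$ before differentiating so that the resulting $f_\epsilon$ enjoys the regularity needed to apply the $\BE(0,N)$ inequality in the self-improved form of \cite{gi2012,agmr2015}, sending $\epsilon\downarrow 0$ at the end. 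The hardest single step is arranging this chain of integrations by parts so that all boundary contributions genuinely vanish in the limit while the cancellations producing $(\ast)$ remain intact; it is here that the sharp heat kernel bounds of Section 3 will be essential.
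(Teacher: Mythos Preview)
Your strategy is the classical Ni computation, but it diverges substantially from the paper's argument, and there is a genuine hypothesis mismatch in your plan.

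\textbf{Difference in approach.} The paper does not differentiate $\mathcal{W}(p,t)$ directly or invoke $\Gamma_2$ on $f=-\log p$. Instead it follows the Baudoin--Garofalo semigroup interpolation \cite{BG2011}: one introduces the auxiliary quantity $\Psi(s)=P_s\big(p_{T-s}|\nabla\log p_{T-s}|_w^2\big)$ and applies a ready-made differential inequality (Lemma~\ref{lemma-2}, taken from \cite{jia14,GarofaloMondino}) with the specific choices $a(s)=T-s$, $\gamma(s)=-\tfrac{N}{2(T-s)}$. Combined with the elementary identity of Lemma~\ref{lemma-1}, this yields the \emph{pointwise} (in the weak sense against $0\le\psi\in L^1\cap L^\infty$) inequality $P_t(p_{T-t}W_{T-t})\ge p_TW_T$, and then stochastic completeness gives the conclusion. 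The Bochner inequality is hidden inside the cited Lemma~\ref{lemma-2}, so the paper never needs to make sense of $\Gamma_2(-\log p)$ or justify integration by parts at infinity for the entropy integrand itself.

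\textbf{Gap in your plan.} You state that the sharp heat kernel bounds of Theorem~\ref{main} ``will be essential'' for controlling the boundary terms. But Theorem~\ref{main} requires $N\in\mathbb{N}\setminus\{1\}$ \emph{and} maximum volume growth, whereas Theorem~\ref{mono-W} is stated for arbitrary $\RCD(0,N)$ spaces with $N\in(1,\infty)$ and no growth assumption. If your argument truly needs Theorem~\ref{main}, you prove a strictly weaker statement. You would have to fall back on the non-sharp Gaussian bounds \eqref{fullgaussian}, which do hold in this generality; but then you should check that these, together with the available gradient estimates for $p_t$ (e.g.\ \cite{JLZ2014,jia14}), actually suffice to kill the cutoff error terms. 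A second, more serious concern is step~(iii): in the $\RCD$ setting the weak Bochner inequality is formulated for $f\in\mathcal{D}(\Delta)$ with $\Delta f\in W^{1,2}$, tested against $\phi\in\mathcal{D}(\Delta)\cap L^\infty$ with $\Delta\phi\in L^\infty$. Neither $-\log p_t$ nor $-\log(P_\epsilon p_t)$ is obviously in this class on a non-compact space, and the ``self-improved'' forms you cite do not automatically apply. The paper's interpolation route avoids precisely this issue by packaging the Bochner step into Lemma~\ref{lemma-2}, which is proved for $P_{T-t}f_\delta$ with $f$ bounded and then specialized to the heat kernel.
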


In order to prove Theorem \ref{mono-W}, we need the following lemmas. Let $\delta>0$ and let $0\leq f\in L^1(X)\cap L^\infty(X)$. Set $f_\delta=f+\delta$. For every $t\in [0,T]$, define
$$\Psi(t)=P_t(P_{T-t}f_\delta|\nabla\log P_{T-t}f_\delta|_w^2).$$
\begin{lemma}\label{lemma-1}
Let $(X,d,\mu)$ be an $\RCD(K,N)$ space with $K\in\R$ and $N\in (1,\infty)$, and let $T>0$ and $\delta>0$. Suppose that $0\leq f\in L^1(X)\cap L^\infty(X)$. Then, for any $t\in[0,T)$, it holds that
\begin{eqnarray*}
\int_0^t\Psi(s)\,\d s=P_t(P_{T-t}f_\delta\log P_{T-t}f_\delta)-P_Tf_\delta\log P_Tf_\delta.
\end{eqnarray*}
\end{lemma}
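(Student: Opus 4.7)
The strategy is to recognize the right-hand side as the primitive of $\Psi$. Set $u(s,\cdot) := P_{T-s}f_\delta$ for $s \in [0,T]$ and define
$$F(s) := P_s\bigl(u(s)\log u(s)\bigr).$$
Then $F(0) = P_T f_\delta \log P_T f_\delta$ while $F(t) = P_t(P_{T-t}f_\delta \log P_{T-t}f_\delta)$, so the asserted identity is equivalent to
$$F(t) - F(0) = \int_0^t \Psi(s)\,\d s,$$
and the whole proof reduces to verifying $F'(s) = \Psi(s)$ on $(0,t)$ and integrating.

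To compute $F'(s)$ I would exploit the forward-backward structure: since $\partial_s u = -\Delta u$, formally
$$F'(s) = P_s\bigl(\Delta(u\log u)\bigr) - P_s\bigl((1+\log u)\Delta u\bigr),$$
the first term coming from differentiating the semigroup $P_s$ and the second from differentiating $u\log u$ in $s$. Applying the Laplacian chain rule with $\phi(r)=r\log r$, which yields $\Delta\phi(u) = \phi'(u)\Delta u + \phi''(u)|\nabla u|_w^2 = (1+\log u)\Delta u + |\nabla u|_w^2/u$, the Laplacian-of-$u$ terms cancel and
$$F'(s) = P_s\!\left(\frac{|\nabla u(s)|_w^2}{u(s)}\right) = P_s\bigl(u(s)\,|\nabla \log u(s)|_w^2\bigr) = \Psi(s).$$

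To make the above rigorous in the $\RCD(K,N)$ framework, I would first observe that $0\leq f\in L^1\cap L^\infty$ together with $f_\delta = f+\delta$ and the $L^\infty$-contractivity/maximum principle for $P_t$ give the two-sided bound $\delta\leq u(s,\cdot) \leq \delta+\|f\|_{L^\infty}$ for every $s\in[0,T]$. Hence $u\log u$ takes values in a compact interval on which $\phi,\phi',\phi''$ are uniformly bounded, and by the Lipschitz regularization of $P_t$ available on $\RCD(K,N)$ spaces, $u(s,\cdot)\in\mathcal{D}(\Delta)\cap \Lip(X)\cap L^\infty(X)$ for $s\in(0,T)$. The chain and Leibniz rules for $\langle\nabla\cdot,\nabla\cdot\rangle$ recorded in Section~2 then validate the identity $\Delta(u\log u) = (1+\log u)\Delta u + |\nabla u|_w^2/u$ in $L^2$.

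The main technical obstacle I anticipate is the interchange of $\partial_s$ with the semigroup $P_s$ in the derivative of $F$. This will be handled in the standard Dirichlet-form manner: by strong $L^2$-continuity of $s\mapsto u(s)\log u(s)$, which follows from the uniform two-sided bounds on $u(s)$ and the $L^2$-continuity of $s\mapsto u(s)$, together with the contractivity of $P_s$, one splits the difference quotient of $F$ into one piece involving the time derivative of $u(s)\log u(s)$ and one involving the time derivative of $P_s$, and passes to the limit using the Fr\'{e}chet $L^2$-differentiability of each factor. Once $F'(s)=\Psi(s)$ is established on $(0,t)$, integration yields the lemma.
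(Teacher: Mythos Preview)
Your overall strategy---differentiating $s\mapsto P_s\bigl(P_{T-s}f_\delta\log P_{T-s}f_\delta\bigr)$ and using the chain rule to produce the cancellation---is exactly the paper's. The difference, and the place where your proposal has a genuine gap, is integrability.

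When $\mu(X)=\infty$ (the case of interest in the rest of the paper), the function $u(s)\log u(s)$ does \emph{not} lie in any $L^p$ with $p<\infty$: since $u(s)=P_{T-s}f+\delta$ with $P_{T-s}f\in L^1\cap L^\infty$, one has $u(s)\log u(s)\to \delta\log\delta\neq 0$ at infinity. Hence your claimed ``strong $L^2$-continuity of $s\mapsto u(s)\log u(s)$'' fails, and the chain rule $\Delta(u\log u)=(1+\log u)\Delta u+|\nabla u|_w^2/u$ cannot be asserted ``in $L^2$'' because $u\log u$ is only in $L^\infty$. The two-sided bound $\delta\le u\le \delta+\|f\|_\infty$ controls the coefficients but not the integrability of the composite.

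The paper repairs this by subtracting the affine part before applying the chain rule. It sets
\[
\theta(s)=(s+\delta)\log(s+\delta)-(1+\log\delta)s-\delta\log\delta,
\]
so that $\theta(0)=0$, $\theta'(s)=\log(s+\delta)-\log\delta$ and $0<\theta''\le 1/\delta$. Then $\theta(P_{T-t}f)\in L^1$ because $P_{T-t}f\in L^1\cap L^\infty$ and $|\theta(s)|\le Cs$ on bounded sets. The affine remainder $(1+\log\delta)P_{T-t}f+\delta\log\delta$ is annihilated by the time derivative (using $P_tP_{T-t}f=P_Tf$ and stochastic completeness), so one is reduced to differentiating $P_t\theta(P_{T-t}f)$. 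At this point the paper invokes the chain rule of Bouleau--Hirsch for the $L^1$-generator $\Delta_1$ to obtain $\theta(P_{T-t}f)\in\mathcal D(\Delta_1)$ with $\Delta_1\theta(P_{T-t}f)=\theta'(P_{T-t}f)\Delta P_{T-t}f+\theta''(P_{T-t}f)|\nabla P_{T-t}f|_w^2$, and the cancellation you describe then yields $\Psi$.

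In short: keep your computation, but replace $\phi(r)=r\log r$ by $\theta$ applied to $P_{T-t}f$ (not $f_\delta$), and run the differentiation in $L^1$ rather than $L^2$.
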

\begin{proof}
Let $T>0$ and $\delta>0$. Take nonnegative functions $f$ from $L^1(X)\cap L^\infty(X)$. Let $\theta(s)=(s+\delta)\log(s+\delta)-(1+\log\delta)s-\delta\log\delta$, $s\geq0$. Then $\theta(0)=0$, $\theta'(s)=\log(s+\delta)-\log\delta$, $\theta''(s)=\frac{1}{s+\delta}\in(0,1/\delta]$. Let $t\in[0,T)$. Since $P_{T-t}f\in \mathcal{D}(\Delta)$, by \cite[Corollary 6.1.4]{BouleauHirsch1991}, $\theta(P_{T-t}f)\in \mathcal{D}(\Delta_1)$, where $\Delta_1$ is the smallest closed extension of the generator of $\{P_t\}_{t\geq0}$ restricted to $\{f\in \mathcal{D}(\Delta)\cap L^1(X,\mu):\,\Delta f\in L^1(X,\mu)\}$.

Hence, for $\L^1$-a.e. $t\in [0,T)$, we deduce that
\begin{eqnarray*}
&&\frac{\d}{\d t}P_t(P_{T-t}f_\delta\log P_{T-t}f_\delta)\\
&=&\frac{\d}{\d t}P_t\left[\theta(P_{T-t}f)+(1+\log\delta)P_{T-t}f+\delta\log\delta \right]\\
&=&\frac{\d}{\d t} P_t\left[\theta(P_{T-t}f)\right]\\
&=&P_t\left[\Delta_1\theta(P_{T-t}f)-\theta'(P_{T-t}f)\Delta P_{T-t}f\right]\\
&=&P_t\left[\big(\log(P_{T-t}f+\delta)-\log\delta\big)\Delta P_{T-t}f+\frac{|\nabla P_{T-t}f|^2_w}{P_{T-t}f+\delta}\right]-P_t\big(\theta'(P_{T-t}f)\Delta P_{T-t}f\big)\\
&=&P_t(P_{T-t}f_\delta|\nabla\log P_{t-t}f_\delta|_w^2)=\Psi(t),
\end{eqnarray*}
where we have used the fact that $\Delta_1(P_tg)=P_t(\Delta_1g)$ for any $g\in\mathcal{D}(\Delta_1)$ and $t>0$ in the third equality and \cite[Corollary 6.1.4]{BouleauHirsch1991} again in the forth equality.

Integrating both sides on [0,t] with respect to $\d t$, we complete the proof.
\end{proof}

The next one is borrowed from \cite[Proposition 5.2]{jia14} (see also \cite{GarofaloMondino} for the particular case when $\mu$ is a probability measure).
\begin{lemma}\label{lemma-2}
Let $(X,d,\mu)$ be an $\RCD(K,N)$ space with $K\in\R$ and $N\in (1,\infty)$, and let $T>0$ and $\delta>0$. Suppose that $0\leq f,\psi\in L^1(X)\cap L^\infty(X)$. Let $a\in C^1([0,T],[0,\infty))$ and $\gamma\in C([0,T],\R)$. Then, for a.e. $t\in[0,T]$, it holds that
\begin{eqnarray*}
&&\frac{\d}{\d t}\int_X a(t)\Psi(t)\psi\,\d\mu\\
&\geq&\int_X\left[\left(a'(t)-\frac{4a(t)\gamma(t)}{N}+2Ka(t)\right)\Psi(t)+\frac{4a(t)\gamma(t)}{N}\Delta P_Tf_\delta-\frac{2a(t)\gamma(t)^2}{N}P_Tf_\delta\right]\psi\,\d\mu.
\end{eqnarray*}
\end{lemma}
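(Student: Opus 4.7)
The plan is to identify $\tfrac{\d}{\d t}\Psi(t)$ as a $P_t$-average of the dimensional Bochner quantity $u\,\Gamma_2(\log u)$, then to apply the logarithmic form of the $\BE(K,N)$ inequality furnished by $\RCD(K,N)$, and finally to complete the square in $\gamma(t)$. Throughout, I write $u = u_{T-t} := P_{T-t}f_\delta$, so that $\Psi(t)=P_t(|\nabla u|_w^2/u)$.

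First, since $f\ge 0$ and $\delta>0$, the maximum principle gives $u_s = P_s f_\delta \ge \delta > 0$ on $[0,T]$, so $|\nabla u_s|_w^2/u_s$ and $\log u_s$ are well-defined, and the smoothing effect of the heat semigroup places $u_s$, $\Delta u_s$ and $|\nabla u_s|_w^2$ in suitable function spaces for $s>0$, which justifies the computations that follow. Using $\partial_t u = -\Delta u$ together with $\partial_t P_t = \Delta P_t$, one would compute
\[
\frac{\d}{\d t}\Psi(t) = P_t\!\left[\Delta\!\left(\tfrac{|\nabla u|_w^2}{u}\right) - \tfrac{2\langle\nabla u,\nabla\Delta u\rangle}{u} + \tfrac{|\nabla u|_w^2\,\Delta u}{u^2}\right].
\]
Expanding $\Delta(|\nabla u|_w^2/u)$ by the Leibniz rule and using the definition $\Gamma_2(u) = \tfrac{1}{2}\Delta|\nabla u|_w^2 - \langle\nabla u,\nabla\Delta u\rangle$, a direct algebraic manipulation collapses the bracket to $2\,u\,\Gamma_2(\log u)$, giving the clean identity
\[
\frac{\d}{\d t}\Psi(t) = 2\,P_t\bigl(u_{T-t}\,\Gamma_2(\log u_{T-t})\bigr).
\]

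Next, I would invoke the logarithmic Bochner inequality (equivalent to $\BE(K,N)$, hence available under $\RCD(K,N)$ by Erbar--Kuwada--Sturm \cite{eks2013}): since $\Gamma(\log u)=|\nabla u|_w^2/u^2$ and $\Delta\log u = \Delta u/u - |\nabla u|_w^2/u^2$, multiplying by $u$ yields
\[
u\,\Gamma_2(\log u) \ge K\,\tfrac{|\nabla u|_w^2}{u} + \tfrac{(\Delta u - |\nabla u|_w^2/u)^2}{N u}.
\]
For any $\gamma \in \R$, the pointwise inequality $(\Delta u - |\nabla u|_w^2/u - \gamma u)^2 \ge 0$ gives
\[
\tfrac{(\Delta u - |\nabla u|_w^2/u)^2}{u} \ge 2\gamma\,\Delta u - 2\gamma\,\tfrac{|\nabla u|_w^2}{u} - \gamma^2 u.
\]
Applying $P_t$ with $\gamma=\gamma(t)$ and using $P_t\Delta=\Delta P_t$ together with the semigroup identity $P_t(u_{T-t}) = P_T f_\delta$, one arrives at
\[
\frac{\d}{\d t}\Psi(t) \ge 2K\Psi(t) - \tfrac{4\gamma(t)}{N}\Psi(t) + \tfrac{4\gamma(t)}{N}\Delta P_T f_\delta - \tfrac{2\gamma(t)^2}{N}\,P_T f_\delta.
\]

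Finally, multiplying by $a(t)\ge 0$, adding $a'(t)\Psi(t)$ via the product rule, and integrating against $\psi\ge 0$ gives exactly the asserted inequality. The main obstacle will be the first step: rigorously establishing $\tfrac{\d}{\d t}\Psi(t) = 2P_t(u\,\Gamma_2(\log u))$ and legitimizing the logarithmic Bochner inequality in the non-smooth $\RCD(K,N)$ setting requires the weak $\Gamma_2$-calculus of Savar\'e, the semigroup-regularization estimates for $\Delta u_s$ and $|\nabla u_s|_w^2$, and the $\BE(K,N)$--$\RCD(K,N)$ equivalence to transfer Bochner from $u$ to $\log u$; the nonlinear quotient $|\nabla u|_w^2/u$ in particular demands care in interpreting the pointwise identities at the level of the semigroup.
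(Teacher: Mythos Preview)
Your outline is correct and is precisely the Baudoin--Garofalo strategy that the paper invokes: the paper does not prove this lemma itself but simply borrows it from \cite[Proposition~5.2]{jia14} (with the probability-measure case in \cite{GarofaloMondino}), and those references carry out exactly the computation you describe---identify $\tfrac{\d}{\d t}\Psi(t)$ with $2P_t(u\,\Gamma_2(\log u))$, apply the dimensional Bochner inequality from $\BE(K,N)\Leftrightarrow\RCD^\ast(K,N)$ \cite{eks2013}, complete the square in $\gamma$, and collect terms after the product rule. You have also correctly flagged the genuine work: in the non-smooth setting the pointwise identity $\tfrac{\d}{\d t}\Psi=2P_t(u\,\Gamma_2(\log u))$ and the logarithmic Bochner inequality must be interpreted weakly (testing against $\psi$, with $\Gamma_2$ a measure), and Jiang's paper handles this via Savar\'e's self-improvement machinery and semigroup regularization; your sketch would need to import those justifications to be complete.
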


\begin{proof}[Proof of Theorem \ref{mono-W}]
The method used here follows essentially the proof of \cite[Proposition 2.6]{BG2011} in the context of complete and smooth Riemannian manifolds. For any $t\in [0,T)$, let $a(t)=T-t$ and $\gamma(t)=-\frac{N}{2(T-t)}$. On the one hand, applying Lemma \ref{lemma-2} with $P_{T-t}f_\delta$ replaced by $p_{T-t}$ and integrating on [0,T] with respect to $\d t$, we have
\begin{eqnarray*}
&&\int_X\Psi(t)a(t)\psi\,\d\mu-\int_X\Psi(0)a(0)\psi\,\d\mu\\
&\geq&\int_0^t\int_X\left[\big(1+2K(T-s)\big)\Psi(s)-2\Delta p_T -\frac{N}{2(T-s)}p_T\right]\psi\,\d\mu\d s\\
&=&\int_0^t\int_X\Psi(s)\psi\,\d\mu\d s+ 2K\int_0^t\int_X(T-s)\Psi(s)\psi\,\d\mu\d s-2t\int_X(\Delta p_T)\psi\,\d\mu\\
&&-\frac{N}{2}\int_0^T\int_X\frac{1}{T-s}p_T\psi\,\d\mu\d s.
\end{eqnarray*}
By Lemma \ref{lemma-1}, we deduce that
\begin{eqnarray*}
&&(T-t)\int_X\Psi(t)\psi\,\d\mu\\
&\geq& T\int_X p_T|\nabla\log p_T|_w^2\psi\,\d\mu + \int_0^t\int_X \Psi(s)\psi\,\d\mu\d s-2t\int_X (\Delta p_T)\psi\,\d\mu\\
&&-\frac{N}{2}\log T\int_X p_T\psi\,\d\mu + \frac{N}{2}\log(T-t)\int_X p_T\psi\,\d\mu\\
&=&T\int_Xp_T|\nabla\log p_T|_w^2\psi\,\d\mu +\int_X(p_{T-t}\log p_{T-t})P_t\psi\,\d\mu-\int_X(p_T\log p_T )\psi\,\d\mu\\
&&-2T\int_X (\Delta p_T)\psi\,\d\mu +2(T-t)\int_X(\Delta p_T) \psi\d\mu-\frac{N}{2}\log T\int_Xp_T\psi\,\d\mu\\
&&+\frac{N}{2}\log(T-t)\int_Xp_T\psi\,\d\mu\\
&=&\int_Xp_TW_T\psi\,\d\mu+\int_X(p_{T-t}\log p_{T-t})P_t\psi\,\d\mu+2(T-t)\int_X(\Delta p_T) \psi\d\mu.
\end{eqnarray*}

On the other hand,
\begin{eqnarray*}
&&\int_XP_t(p_{T-t}W_{T-t})\psi\,\d\mu\\
&=&\int_XP_t\left(p_{T-t}\left[(T-t)|\nabla\log p_{T-t}|_w^2-2(T-t)\frac{\Delta p_{T-t}}{p_{T-t}}-\log p_{T-t}-\frac{N}{2}\log(T-t)\right]\right)\psi\,\d\mu\\
&=&(T-t)\int_X \Psi(t)\psi\,\d\mu-2(T-t)\int_X\Delta P_t(p_{T-t})\psi\,\d\mu-\int_X P_t(p_{T-t}\log p_{T-t})\psi\,\d\mu\\
&&-\frac{N}{2}\log(T-t)\int_X P_t(p_{T-t})\psi\,\d\mu\\
&=&(T-t)\int_X \Psi(t)\psi\,\d\mu-2(T-t)\int_X(\Delta p_{T})\psi\,\d\mu-\int_X P_t(p_{T-t}\log p_{T-t})\psi\,\d\mu\\
&&-\frac{N}{2}\log(T-t)\int_X p_{T}\psi\,\d\mu.
\end{eqnarray*}

Thus, for any $t\in [0,T)$ and any $0\leq\psi\in L^1(X)\cap L^\infty(X)$, we have
$$\int_XP_t(p_{T-t}W_{T-t})\psi\,\d\mu\geq\int_Xp_TW_T\psi\,\d\mu.$$
By approximation argument, the above inequality also holds for any $0\leq\psi\in L^\infty(X)$.
Therefore, combining with the stochastic completeness, i.e., $P_t1=1$ for any $t>0$, we arrive at
$$\mathcal{W}(p,T-t)\geq\mathcal{W}(p,T),$$
which completes the proof.
\end{proof}

In \cite{Ni2004a}, L. Ni showed that if $M$ is a finite-dimensional complete Riemannian manifold with nonnegative Ricci curvature, then $M$ has maximum volume growth is equivalent to that the Perelman entropy $\mathcal{W}(p,t)$ has a lower bound. The next theorem generalize this nice result to the non-smooth setting. Note that we do not require $N\in \mathbb{N}$ in the next theorem.
\begin{theorem}\label{max-vol-W}
Let $(X,d,\mu)$ be an $\RCD(0,N)$ space with $N\in (1,\infty)$. Then $(X,d,\mu)$ has the maximum volume growth \eqref{max-vol-1} if and only if,  there exists a constant $A>0$ such that
$$\mathcal{W}(p,t)\geq-A,\quad\mbox{for any }t>0.$$
\end{theorem}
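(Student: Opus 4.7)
The plan is to prove the two implications separately, routing through the Nash entropy $\mathcal{N}(p,t)$ as the intermediary between $\mathcal{W}$ and the volume ratio $\kappa_x(r)=\mu(B(x,r))/r^N$. Two relations will be used throughout. First, writing $p_t=(4\pi t)^{-N/2}e^{-f}$ and using stochastic completeness~\eqref{stocom}, one obtains the algebraic identity
$$\mathcal{W}(p,t)=t\int_X p_t\,|\nabla f|_w^2\,\d\mu+\mathcal{N}(p,t)-\tfrac{N}{2}.$$
Second, differentiating $-\int_X p_t\log p_t\,\d\mu$ in $t$, using $\partial_t p_t=\Delta p_t$ and integrating by parts in the Dirichlet form, one derives the Perelman-type identity
$$\bigl(t\,\mathcal{N}(p,t)\bigr)'=\mathcal{W}(p,t),$$
which I would justify by the same regularization argument used in Lemmas~\ref{lemma-1} and~\ref{lemma-2} (replace $p_t$ by $p_t+\delta$, compute on the regularized level, and pass to the limit $\delta\to 0^+$).

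For the ``only if'' direction, I will apply Lemma~\ref{lowerboundary} to obtain $s(x,r)\geq N\kappa_\infty r^{N-1}$ for all $r>0$, hence $\mu(B(x,r))\geq \kappa_\infty r^N$. The general Gaussian upper bound in~\eqref{fullgaussian} then gives the pointwise estimate
$$p_t(x,y)\leq \frac{C_0(N)}{\kappa_\infty\,t^{N/2}}\qquad\mbox{for every }t>0,\ y\in X.$$
Taking $-\log$, integrating against $p_t(x,\cdot)\,\d\mu$ using~\eqref{stocom}, and substituting into the definition of $\mathcal{N}$ yields a $t$-independent lower bound $\mathcal{N}(p,t)\geq -A_0$. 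Since the first identity shows $\mathcal{W}\geq\mathcal{N}-N/2$, this produces $\mathcal{W}(p,t)\geq-A_0-N/2$ for all $t>0$, and it suffices to take $A=A_0+N/2$.

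For the ``if'' direction, suppose $\mathcal{W}(p,t)\geq -A$ for all $t>0$. By the monotonicity in Theorem~\ref{mono-W}, the limit $\mathcal{W}_\infty:=\lim_{t\to\infty}\mathcal{W}(p,t)$ exists and is $\geq -A$. The differential identity then shows $t\mathcal{N}(p,t)+At$ is nondecreasing, so dividing by $t$ and letting $t\to\infty$ forces $\liminf_{t\to\infty}\mathcal{N}(p,t)\geq -A$. To convert this into volume growth, I would derive the converse-type upper bound
$$\mathcal{N}(p,t)\leq\log\kappa_x(\sqrt t)+C_2(N)$$
from the lower Gaussian bound in~\eqref{fullgaussian} and the standard second-moment estimate $\int_X p_t(x,y)\,d(x,y)^2\,\d\mu(y)\leq C_1(N)\,t$, the latter obtained by an annular decomposition using the upper Gaussian bound in~\eqref{fullgaussian}, the doubling inequality in Lemma~\ref{doubling}(i), and the co-area formula in Lemma~\ref{co-area}. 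Combining the two bounds forces $\kappa_x(\sqrt t)\geq e^{-A-C_2(N)-1}$ for all $t$ sufficiently large; since $\kappa_x$ is nonincreasing in $r$ by Lemma~\ref{doubling}(i), the limit $\kappa_\infty=\lim_{r\to\infty}\kappa_x(r)$ is strictly positive, which is maximum volume growth.

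The main obstacle will be the rigorous derivation of $(t\mathcal{N})'=\mathcal{W}$ in the $\RCD(0,N)$ setting, because one must differentiate $-\int p_t\log p_t\,\d\mu$ through the logarithmic singularity and integrate by parts without smooth test functions or a smooth manifold structure. The $p_t+\delta$ regularization used in the proof of Lemma~\ref{lemma-1}, combined with the pointwise Gaussian bounds in~\eqref{fullgaussian} that give both strict positivity and Gaussian integrability of $p_t$, should legitimize the computation and then the limit $\delta\to 0^+$. The second-moment bound, while standard, also needs to be checked carefully in the non-smooth setting, but reduces to a direct application of Lemma~\ref{co-area} and Lemma~\ref{doubling}(i) to $r^2 e^{-r^2/(5t)}s(x,r)$.
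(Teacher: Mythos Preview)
Your forward direction is essentially the paper's: both drop the nonnegative gradient term to get $\mathcal{W}(p,t)\geq \mathcal{N}(p,t)-\tfrac N2$, then use $\mu(B(x,r))\geq\kappa_\infty r^N$ together with the Gaussian upper bound in~\eqref{fullgaussian} to bound $p_t$ pointwise and hence $\mathcal{N}$ from below.

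For the converse, however, you take a genuinely different route. The paper does \emph{not} pass through the differential identity $(t\mathcal{N})'=\mathcal{W}$. Instead it invokes the Li--Yau inequality (from~\cite{jia14}) to bound the gradient term directly,
\[
t\int_X p_t\,|\nabla\log p_t|_w^2\,\d\mu\leq \tfrac N2,
\]
which combined with your first algebraic identity yields $\mathcal{W}(p,t)\leq\mathcal{N}(p,t)$ for \emph{every} $t>0$, hence $\mathcal{N}(p,t)\geq -A$ for all $t$. From there both arguments coincide: the lower Gaussian bound plus the annular second-moment estimate give $\mathcal{N}(p,t)\leq\log\kappa_x(\sqrt t)+C(N)$, forcing $\kappa_\infty>0$. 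Your detour via $(t\mathcal{N})'=\mathcal{W}$ avoids importing Li--Yau and is self-contained, but you pay for this by having to justify differentiation of $\int p_t\log p_t\,\d\mu$ and the integration by parts in the nonsmooth setting---exactly the obstacle you flag. The paper's route trades that analytical work for a citation, and as a bonus delivers the bound at every $t$ rather than only asymptotically (your liminf argument is fine for the stated conclusion, and the monotonicity from Theorem~\ref{mono-W} is actually not needed---the bound $\mathcal W\geq -A$ alone already makes $t\mathcal N+At$ nondecreasing).
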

\begin{proof}
Suppose at first that \eqref{max-vol-1} holds. Then, for any $t>0$,
\begin{eqnarray*}
\mathcal{W}(p,t)&=&\int_X(t|\nabla f|^2_w+f-N)p\,\d\mu\\
&\geq&\int_X\left[-\log p_t(x,y)-\frac{N}{2}\log(4\pi t)-N\right]p_t(x,y)\,\d\mu(y)\\
&=&-\int_Xp_t(x,y)\log p_t(x,y)\,\d\mu(y)-\frac{N}{2}\log(4\pi t)-N,
\end{eqnarray*}
where $f=-\log p-\frac{N}{2}\log(4\pi t)$. Since $r^{-N}\mu(B(x,r))$ is monotonically decreasing as $r$ increasing, we derive from \eqref{max-vol-1} that $\mu(B(x,r))\geq \kappa_\infty r^N$ for all $r>0$. Then, by the heat kernel upper bound in \eqref{fullgaussian}, i.e.,
$$p_t(x,y)\leq \frac{C_0(N)}{\mu(B(x,\sqrt{t}))}\leq \frac{C_0(N)}{\kappa_\infty t^{N/2}},$$
we obtain that
$$\mathcal{W}(p,t)\geq -\log\Big(\frac{C_0(N)}{\kappa_\infty}\Big)-\frac{N}{2}\log(4\pi)-N.$$

Conversely, suppose that there exists a constant $A>0$ such that $\mathcal{W}(p,t)\geq-A$, for any $t>0$. By the Li--Yau inequality (see \cite{jia14}), i.e., for any $t>0$,
$$|\nabla_y \log p_t(x,\cdot)|^2_w-\partial_t \log p_t(x,\cdot)\leq \frac{N}{2t},\quad\mu\mbox{-a.e. in }X,$$
we have that
\begin{eqnarray}\label{W-1}
&&t\int_X |\nabla f|_w^2(y)p_t(x,y)\,\d\mu(y)=t\int_X |\nabla_y \log p_t(x,y)|_w^2p_t(x,y)\,\d\mu(y)\cr
&\leq& t\int_X\Big(\partial_t \log p_t(x,y)+\frac{N}{2t}\Big)p_t(x,y)\,\d\mu(y)=t\int_X\Delta_y p_t(x,y)\,\d\mu(y)+\frac{N}{2}\cr
&=&\frac{N}{2}.
\end{eqnarray}
 Here, the last equality holds due to that we can choose a sequence of  Lipschitz cut-off functions $\{\chi_n\}_{n\in \mathbb{N}}$ such that, for any $y\in X$,
\begin{equation*}
\chi_n(y)=
\begin{cases}
1,\quad &{\hbox{on}}\,\ B(o,2^n),\\
0,\quad &{\hbox{on}}\,\ X\setminus B(o,2^{n+1}),\\
2-2^{-n}d(y,o),\quad &{\hbox{on}}\,\ B(o,2^{n+1})\setminus B(o,2^{n}),
\end{cases}
\end{equation*}
for some $o\in X$, and $|\nabla \chi_n|_w\leq 2^{-n}$, and then
\begin{eqnarray*}
\left|\int_X\Delta_y p_t(x,y)\,\d\mu(y)\right|&=&\left|\lim_{n\rightarrow\infty}\int_X\chi_n(y)\Delta_y p_t(x,y)\,\d\mu(y)\right|\\
&=&\left|-\lim_{n\rightarrow\infty}2\int_X\langle\nabla\chi_n, \nabla_y p_t(x,\cdot)\rangle(y)\,\d\mu(y)\right|\\
&\leq&\lim_{n\rightarrow\infty}2\int_X|\nabla\chi_n|_w(y) |\nabla_yp_t(x,\cdot)|_w(y)\,\d\mu(y)\\
&\leq&\lim_{n\rightarrow\infty}\frac{1}{2^{n-1}}\int_X |\nabla_yp_t(x,\cdot)|_w(y)\,\d\mu(y)=0,
\end{eqnarray*}
where the last equality is implied by \cite[Corollary 1.1]{JLZ2014}.
By the lower bound of the heat kernel in \eqref{fullgaussian}, we obtain that
\begin{eqnarray*}
&&-\int_X p_t(x,y)\log p_t(x,y)\,\d\mu(y)\\
&\leq&-\int_X p_t(x,y)\log\left[\frac{1}{C(N)\mu(B(x,\sqrt{t}))}\exp\left\{-C(N)\frac{d(x,y)^2}{t}\right\}\right]\,\d\mu(y)\\
&=&\log C(N) + \log\mu(B(x,\sqrt{t})) + \frac{C(N)}{t}\int_X d(x,y)^2 p_t(x,y)\,\d\mu(y),
\end{eqnarray*}
where $C(N)$ is a positive constant depending on $N$. Let
$${\rm{J}}=\frac{C(N)}{t}\int_X d(x,y)^2 p_t(x,y)\,\d\mu(y).$$
By the upper bound of the heat kernel in \eqref{fullgaussian}, we deduce that
\begin{eqnarray*}
{\rm{J}}&\leq& \frac{C_0(N)}{t\mu(B(x,\sqrt{t}))}\int_X d(x,y)^2\exp\left\{-\frac{d(x,y)^2}{5t}\right\} \,\d\mu(y)\\
&=&\frac{C_0(N)}{t\mu(B(x,\sqrt{t}))}\left(\int_{d(x,y)< \sqrt{t}} + \int_{d(x,y)\geq \sqrt{t}}\right)d(x,y)^2\exp\left\{-\frac{d(x,y)^2}{5t}\right\} \,\d\mu(y)\\
&=:&\rm{J}_1+\rm{J}_2.
\end{eqnarray*}
It is easy to know that
$${\rm{J}}_1\leq C_1(N).$$
By splitting the region of integration into annular regions and by the doubling property in Lemma \ref{doubling}(i), we have that
\begin{eqnarray*}
{\rm{J}_2}&=&\frac{C_0(N)}{t\mu(B(x,\sqrt{t}))}\sum_{i=1}^\infty\int_{i\sqrt{t}\leq d(x,y)<(i+1)\sqrt{t}}  d(x,y)^2\exp\left\{-\frac{d(x,y)^2}{5t}\right\} \,\d\mu(y)\\
&\leq&\frac{C_0(N)}{t\mu(B(x,\sqrt{t}))}\sum_{i=1}^\infty (i+1)^2t\exp\left\{-\frac{i^2}{5}\right\}\mu(B(x,(i+1)\sqrt{t}))\\
&\leq& C_0(N)\sum_{i=1}^\infty  (i+1)^{N+2}\exp\left\{-\frac{i^2}{5}\right\}\\
&\leq& C_2(N).
\end{eqnarray*}
Hence, ${\rm{J}}\leq C_3(N)$. Thus, for any $t>0$,
\begin{eqnarray}\label{W-2}
-\int_X p_t(x,y)\log p_t(x,y)\,\d\mu(y)\leq C_4(N) + \log\mu(B(x,\sqrt{t})).
\end{eqnarray}
Combining \eqref{W-1} and \eqref{W-2} with the assumption, we have that, for any $t>0$,
$$-\frac{N}{2}\log(4\pi t) + C_5(N) + \log\mu(B(x,\sqrt{t})) \geq \mathcal{W}(p,t)\geq -A,$$
which immediately implies that
$$\mu(B(x,\sqrt{t}))\geq (4\pi)^{\frac{N}{2}}e^{-A-C_5(N)}t^{\frac{N}{2}},\quad\mbox{for any }t>0.$$
Therefore, we complete the proof.
\end{proof}
\begin{remark}
In fact, for $N\in \mathbb{N}\setminus\{1\}$, we can also use the heat kernel upper and lower bounds in Theorem \ref{main}, and deduce the same conclusion with different constants following the same method in the proof of Theorem \ref{asym-entropy}.
\end{remark}
\begin{corollary}\label{bound-W}

Let $(X,d,\mu)$ be an $\RCD(0,N)$ space with $N\in\mathbb{N}\setminus\{1\}$. Suppose that $(X,d,\mu)$ has the maximum volume growth \eqref{max-vol-1}. Then
$$\log\left(\frac{\kappa_\infty}{\omega_N}\right)\leq\mathcal{W}(p,t)\leq 0,\quad\mbox{for any }t>0.$$
\end{corollary}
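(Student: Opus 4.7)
I would split the argument into the two inequalities.

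\emph{Lower bound.} By Theorem \ref{mono-W}, the map $t\mapsto\mathcal{W}(p,t)$ is non-increasing on $(0,\infty)$, and Theorem \ref{largeentropy} identifies $\lim_{t\to\infty}\mathcal{W}(p,t)=\log(\kappa_\infty/\omega_N)$. Combining these two facts immediately gives $\mathcal{W}(p,t)\geq\log(\kappa_\infty/\omega_N)$ for every $t>0$.

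\emph{Upper bound.} My plan is to prove the stronger chain $\mathcal{W}(p,t)\leq \mathcal{N}(p,t)\leq 0$. Writing $f=-\log p-(N/2)\log(4\pi t)$ as in the definitions, a direct algebraic manipulation yields
$$\mathcal{W}(p,t)-\mathcal{N}(p,t)=t\int_X|\nabla f|_w^2\,p\,\d\mu-\frac{N}{2}.$$
The Li--Yau step already carried out in the proof of Theorem \ref{max-vol-W} to derive \eqref{W-1} bounds the right-hand side by $0$, so $\mathcal{W}(p,t)\leq\mathcal{N}(p,t)$.

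To show $\mathcal{N}(p,t)\leq 0$, I would apply Lemma \ref{kernelcomparison} with $K=0$, letting the radius of the comparison ball tend to infinity so that the Dirichlet kernels on $\bar B(\bar x,r)\subset\mathbb{R}^N$ converge to the Euclidean heat kernel, to obtain the pointwise lower bound
$$p_t(x,y)\geq (4\pi t)^{-N/2}\exp\left\{-\frac{d(x,y)^2}{4t}\right\}.$$
Taking $-\log$ and integrating against the probability density $p_t(x,\cdot)$ (using \eqref{stocom}) gives
$$\mathcal{N}(p,t)\leq \frac{1}{4t}\int_X p_t(x,y)d(x,y)^2\,\d\mu(y)-\frac{N}{2},$$
so it suffices to prove the sharp second-moment bound $F(t):=\int_X p_t(x,y)d(x,y)^2\,\d\mu(y)\leq 2Nt$. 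Since $F(0)=0$ (Dirac initial data), this reduces to $F'(t)\leq 2N$. Formally $F'(t)=\int_X(\Delta p_t)\,d_x^2\,\d\mu=\int_X p_t\,\Delta d_x^2\,\d\mu$, and by the Leibniz rule combined with $|\nabla d_x|_w\leq 1$ and the Laplacian comparison (Lemma \ref{lap-comp} with $K=0$),
$$\Delta d_x^2=2|\nabla d_x|_w^2+2d_x\Delta d_x\leq 2+2(N-1)=2N\qquad\mbox{on }X\setminus\{x\},$$
hence $F'(t)\leq 2N$; plugging back we conclude $\mathcal{N}(p,t)\leq 0$, which is sharp (equality in Euclidean space).

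\emph{Main obstacle.} The only nontrivial technical point is to rigorously justify the integration by parts identity $\int_X(\Delta p_t)d_x^2\,\d\mu=\int_X p_t\,\Delta d_x^2\,\d\mu$, since $d_x^2$ is unbounded and the Laplacian comparison only controls $\Delta d_x^2$ on $X\setminus\{x\}$. I plan to handle this via a Lipschitz cut-off sequence $\{\chi_n\}$ of the same type used in the last part of the proof of Theorem \ref{max-vol-W}, exploiting the Gaussian heat kernel estimate \eqref{fullgaussian} together with Gaussian gradient bounds to dominate the annular boundary terms (which carry the factor $|\nabla\chi_n|_w\leq 2^{-n}$ against growth at most $d_x^2\lesssim 2^{2n}$) and pass to the limit $n\to\infty$ by dominated convergence.
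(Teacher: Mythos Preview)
Your lower-bound argument is circular. You invoke Theorem \ref{largeentropy} to identify $\lim_{t\to\infty}\mathcal{W}(p,t)=\log(\kappa_\infty/\omega_N)$, but in the paper's logical order that limit (Theorem \ref{asym-P-entropy}) is proved \emph{after} Corollary \ref{bound-W} and explicitly uses it: the very first line of the proof of Theorem \ref{asym-P-entropy} reads ``from Corollary \ref{bound-W}, we have that $\mathcal{W}(p,t)$ is bounded uniformly in $t$.'' So you cannot use the Perelman-entropy limit here. The paper instead obtains the lower bound directly, independently of any large-time statement: as in the first part of the proof of Theorem \ref{max-vol-W}, drop the nonnegative gradient term to get $\mathcal{W}(p,t)\geq -\int p_t\log p_t\,\d\mu-\frac{N}{2}\log(4\pi t)-N$, and then feed in the sharp upper bound \eqref{UE} (in place of the crude \eqref{fullgaussian}) together with a second-moment estimate to land exactly on $\log(\kappa_\infty/\omega_N)$.

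Your upper-bound route is sound and in fact close to the paper's: both use \eqref{W-1} to get $t\int_X|\nabla\log p_t|_w^2 p_t\,\d\mu\leq N/2$, i.e.\ $\mathcal{W}(p,t)\leq\mathcal{N}(p,t)$. Where you then spend effort proving $\mathcal{N}(p,t)\leq 0$ via the Cheeger--Yau lower bound $p_t(x,y)\geq(4\pi t)^{-N/2}e^{-d(x,y)^2/4t}$ and the sharp second-moment bound $\int p_t d_x^2\,\d\mu\leq 2Nt$ from $\Delta d_x^2\leq 2N$, the paper's displayed proof is terser and essentially asserts the same conclusion. Your extra argument is a genuine, self-contained justification of $\mathcal{N}(p,t)\leq 0$ (and the cut-off scheme you outline for the integration by parts is the right way to make it rigorous), so this part is fine --- but it does not rescue the lower bound.
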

\begin{proof}
The lower bound of $\mathcal{W}(p,t)$ can be obtained by a similarly method as the first part in the proof of Theorem \ref{max-vol-W} by using the sharp heat kernel upper bound \eqref{UE}. For the upper bound,
\begin{eqnarray*}
\mathcal{W}(p,t)&=&-\int_X p_t\log p_t\,\d\mu+t\int_X|\nabla\log p_t|_w^2p_t\,\d\mu-\frac{N}{2}\\
&\leq&t\int_X\left(|\nabla\log p_t|_w^2-\frac{N}{2t}\right)p_t\,\d\mu\leq0,
\end{eqnarray*}
where we used the stochastic completeness in the first inequality and \eqref{W-1} in the second inequality.
\end{proof}

Now we begin the proof of the large-time asymptotics of the Perelman entropy (see \cite[page 371]{Ni2004b}).
\begin{proof}[Proof of Theorem \ref{asym-P-entropy}]
By the assumption, from Corollary \ref{bound-W}, we have that $\mathcal{W}(p,t)$ is bounded uniformly in $t$. From Theorem \ref{asym-entropy}, we know that $\lim_{t\rightarrow\infty}\mathcal{N}(p,t)=\log\left(\kappa_\infty/\omega_N\right)$. Since
$$\mathcal{W}(p,t)=\mathcal{N}(p,t)+t\int_X|\nabla\log p_t|_w^2p_t\,\d\mu-\frac{N}{2},$$
and $\int_X|\nabla\log p_t|_w^2p_t\,\d\mu>0$, we know that there exists a real sequence $\{t_i\}_{i\geq 1}$, which tends to $\infty$ as $i\rightarrow\infty$, such that $\lim_{i\rightarrow\infty}t_i\int_X|\nabla\log p_{t_i}|_w^2p_{t_i}\,\d\mu-\frac{N}{2}=0$. Thus, by the monotonicity in Theorem \ref{mono-W}, we conclude that $\lim_{t\rightarrow\infty}\mathcal{W}(p,t)=\lim_{t\rightarrow\infty}\mathcal{N}(p,t)$.
\end{proof}

Finally, combining Theorems \ref{asym-entropy} and \ref{asym-P-entropy} together, we finish the proof of Theorem \ref{largeentropy}.

\subsection*{Acknowledgment}
\hskip\parindent  The author would like to thank Professor Bin Qian for advising him to study the Perelman entropy in metric measure spaces, and to thank Professors Dejun Luo and Yongsheng Song for a nice discussing when the author gave a talk on this topic in the Institute of Applied Mathematics, CAAS on November 20, 2015. This work started when the author was a research fellow in Macquarie University from October 2014 to October 2015. The author also would like to thank Professor Adam Sikora for his interest.

\end{document}